\renewcommand{\phi}{\varphi}
\renewcommand{\theta}{\vartheta}
\renewcommand{\epsilon}{\varepsilon}
\newcommand{\field}[1]{\mathbb{#1}} 
\newcommand{\R}{\field{R}}
\newcommand{\A}{\field{A}}
\newcommand{\I}{\field{I}}
\newcommand{\CC}{\mathcal{C}}
\newcommand{\EE}{\mathcal{E}}
\newcommand{\RR}{\mathcal{R}}
\newcommand{\NN}{\mathcal{N}}
\newcommand{\Sph}{\mathcal{S}}
\newcommand{\vm}{v_\mathrm{m}}
\newcommand{\Clip}{\CC^\mathrm{Lip}}
\newcommand{\Ash}{\A_\mathrm{sh}}
\newcommand{\Esh}{\EE_\mathrm{sh}}
\newcommand{\Dsh}{\RR_\mathrm{sh}}
\newcommand{\lsh}{\ell_\mathrm{sh}}
\newcommand{\Psh}{\Psi_\mathrm{sh}}
\newcommand{\C}{{C}}
\newcommand{\Csh}{C_\mathrm{sh}}
\newcommand{\hCsh}{\hat C_\mathrm{sh}}
\newcommand{\mud}{\mu^\mathrm{d}}
\newcommand{\mum}{\mu_\mathrm{min}}
\newcommand{\Lmax}{L^\mathrm{max}}
\newcommand{\ten}{\Sigma}
\newcommand{\cirlet}[1]{\textcircled{\textsf{#1}}}
\DeclareMathOperator{\dist}{dist}
\DeclareMathOperator{\meas}{meas}
\DeclareMathOperator*{\argmin}{arg\,min}
\providecommand{\abs}[1]{\left\lvert#1\right\rvert}                             
\providecommand{\norm}[1]{\left\lVert#1\right\rVert}    
\newcommand{\scal}[2]{\left\langle #1,#2\right\rangle}
\newcommand*{\diff}{\mathop{}\!\mathrm{d}}
\newcommand{\dd}{\diff}
\newtheorem{theorem}{Theorem}
\newtheorem{lemma}[theorem]{Lemma}
\theoremstyle{definition}
\newtheorem{defin}[theorem]{Definition}
\newtheorem{remark}[theorem]{Remark}
\newtheorem{example}[theorem]{Example}
\numberwithin{equation}{section}
\numberwithin{theorem}{section}
\definecolor{uuuuuu}{rgb}{0.25,0.25,0.25}
\title{Rate-independent soft crawlers}
\author{\textsc{Paolo Gidoni}}
\date{\small{\textit{CMAF-CIO - Centro de Matemática, Aplicações Fundamentais e Investigação Operacional, Universidade de Lisboa,  Portugal.\footnote{e-mail address:\,\texttt{pgidoni@fc.ul.pt} -- address: CMAF-CIO, Faculdade de Ciências da Universidade de Lisboa, Campo Grande, Edificio C6, 1749-016 Lisboa, Portugal}}
} 
}
\begin{document}

\maketitle

\begin{abstract}
	This paper applies the theory of rate-independent systems to model the locomotion of bio-mimetic soft crawlers. We prove the well-posedness of the approach and illustrate how the various strategies adopted by crawlers to achieve locomotion, such as friction anisotropy, complex shape changes and control on the friction coefficients, can be effectively described in terms of stasis domains.

	Compared to other rate-independent systems, locomotion 
models do not present any Dirichlet boundary condition, so that all rigid translations are admissible displacements, resulting in a non-coercivity of the energy term. We prove that existence and uniqueness of solution are guaranteed under suitable assumptions on the dissipation potential. Such results are then extended to the case of time-dependent dissipation. \smallskip
\\ 
\textbf{Keywords}: crawling locomotion, rate-independent systems, soft robotics, stasis domains.
\end{abstract}


\section{Introduction}

Rate-independence is the property of input-output systems whose behaviour is preserved by time-reparametrizations, meaning that a time-rescaling in the input would make the output evolve along the same path of the original system, but according to the same time-rescaling of the input (cf.~\cite{MieRou15}).
Rate-independent systems appear in many phenomena. We mention, for instance, the quasi-static evolution of mechanical systems in presence of dissipative forces such as Coulomb dry friction, plasticity or damage; the motility of swimmers at low Reynolds number; the behaviour of ecological system subject to a slow change in an environmental parameter. 

The expression \emph{rate-independent systems} (RIS) has become widespread referring to a wide class of those phenomena, characterized by the following variational formulation:
\begin{equation}
0\in \partial \RR(\dot x(t))+D_x\EE(t,x)
\label{eq:RIS}
\end{equation}
Physically, this equation is a force balance: configurational forces (e.g.~tension in an elastic body) are described as the spatial gradient of the internal energy $\EE$, while frictional forces are obtained as the subdifferential of a dissipation potential $\RR$, assumed to be positively homogeneous of degree one, in order to guarantee rate-independence.

The variational structure of the problem has favoured the development of an advanced and extensive mathematical theory of rate-independent systems, assisting and inspired by applications in physics of solids and continuum mechanics, modelling phenomena such as elastoplasticity, fracture, damage, phase transitions.

Along with these, systems with dry friction has immediately emerged as one of the most fitting applications of the theory, already from  the first major achievements in the 70s, with the introduction of Moreau's Sweeping processes \cite{Moreau77}.
As today, dry friction still contributes to the development of the theory. We mention the recent results adopting multiscale approaches to study the effective friction in the case of fast-oscillating bodies \cite{HeiMie16} and of hairy surfaces \cite{GidDeS17}; or the illustration  of the properties of non-convex rate independent systems using using simple, representative  dry friction toy models~\cite{Alessi16}.

\medbreak

This paper proposes to apply the theory to another family of friction-based phenomena: \emph{crawling locomotion}.
Our aim is to provide a basic theoretical introduction to the motility of soft crawlers as rate independent system, discussing the main issues and illustrating, with elementary examples, the versatility of this approach. 
In this, we carry on the first step taken in \cite{GidDeS16}, where the motility of a toy model of crawler with three contact points was thoroughly analysed. 
The formulation of crawling as rate-independent system has the advantage to incorporate naturally two demanding, but often realistic, features of crawlers: elasticity of the body, with the consequence that shape is not controlled \emph{a priori}, and a discontinuous friction force. The diffusion of \emph{soft robotics} as paradigm in the design of robotic locomotors makes this class of models increasingly relevant. 

Moreover, the theory of rate-independent systems allows a comprehensive approach to such a  rich family of models. As we show in this paper, we can gather within the same theoretical structure both discrete and continuous models, on the line and on the plane, and including features as friction anisotropy and friction manipulation. We believe that our top-down approach, complementary to an extended and detailed literature of case-specific studies, could support, with a different perspective, the understanding and development of such locomotors.

Compared to classical applications of rate-independent systems, where the net position of the body is usually determined by some Dirichlet boundary conditions, in locomotion it becomes not only free, but the most relevant component to study in the evolution of the system. Indeed, we have no Dirichlet conditions and the actuation of the system is provided only by an internal active distortion, instead of having an external load. This also means that the energy $\EE$ presents an invariance for rigid translation on the space of admissible configurations. A major consequence of such lack of coercivity is that the uniqueness of solution for the system is not guaranteed any more. 

The main mathematical contribution of this paper is to provide, in such situations, sufficient conditions for existence and uniqueness of the solution, assuring the wellposedness of our models. This is done for quadratic energies in Section \ref{sec:indep_theory}, with Theorem \ref{th:indep}. Later, in Section~\ref{sec:dep_theory},  we extend our approach to the case of a time-dependent dissipation potential $\RR=\RR(t,\dot x)$, with Theorem \ref{th:dep}.

Such results are applied to the modelling of crawling locomotion in Sections \ref{sec:indep_examples} and  \ref{sec:dep_examples}. We consider both discrete and continuous models, verify that our assumptions are satisfied by actual locomotors, and briefly comment the nature of the critical cases in which multiplicity of solution may occur.
We then use our formulation of the problem to represent the motility properties of the crawler through its \emph{stasis domain}. This description, introduced in \cite{GidDeS16}, allows to analyse the locomotion capabilities without any assumption on the shape-change strategy adopted. The main models with time-independent are discussed in Section \ref{sec:indep_examples}. 
In Section \ref{sec:dep_examples},  instead, we focus on three variations of \emph{two-anchor crawling}, to emphasize the effectiveness of a control on the friction coefficients. We remark that our approach leaves a complete control on the friction coefficients, which can vary in a continuous range and independently of the actuation on shape, a condition often more realistic than a discrete switching between anchoring and unanchoring. Our examples show how the description through stasis domains encompasses the three main main mechanisms employed in crawling locomotion: friction anisotropy, complex shape changes and friction manipulation.

\medbreak
The study of the locomotion of soft crawlers raises several additional questions, such as optimization and stability, that spread beyond the extent of this paper. At the same time, the framework of rate-independent systems provides a mature theory, with a rich and advanced toolkit. We believe that a larger employment of those techniques, such as the introduction of state-dependence in the dissipation potential, or the adaptation of results recently obtained for the optimal control of sweeping processes \cite{CHHM12,CHHM15,CHHM16,ColPal16}, may support the study of those issues. 
In this perspective, we hope that this paper will contribute to a better understanding of the main ingredients and structures of the locomotion of soft crawlers as rate-independent system, providing solid grounds for future investigations.

\paragraph{Soft crawlers: from Nature to robotics}
 From anellids, as earthworms \cite{ACD04,Quillin1999} and leeches \cite{SNPK86}, to gastropods \cite{Lai_2010}, Nature offers us a large collection of crawling strategies. Each species implements the fundamental mechanism of crawling locomotion in its own way: not only we can observe large differences in the periodic shape change exploited to move \cite{Kuroda2014}, but we also find several elements, like mucus or setae, used to modify the frictional interaction with the environment.

The ability of such locomotor to obtain, with a simple and contained mechanism, a very good motility, working also in irregular or narrow environments, has drawn the attention of the robotics community, leading in recent years to the development several prototypes of  bio-mimetic robotic crawlers. In this trend we can identify two main design principles: the use of soft, elastic materials, and the minimization of the control parameters.

In contrast to robots built from rigid materials, animals are usually soft bodied, and exploit the elastic deformation of their bodies to help achieve a complex and compliant behaviour during locomotion or manipulation.
The paradigm of \emph{soft robotics} proposes to adopt the same principle by building machines from soft materials \cite{CalPicLas17,Kim,trivedi_soft_2008}. In this way, soft robots are endowed  with new capabilities in terms of dexterity or adaptability, supporting motility in an unpredictable and
complex environment. Different materials and design principles has been adopted in the implementation of the soft robotics paradigm. In the case of robotic crawlers we report the use of shape-memory alloys (SMA) coils acting on a silicon- \cite{Men06}, mesh- \cite{Manwell14,Seok2013} or origami-structure \cite{Onal2013}, soft actuators made of dielectric elastomers \cite{Jung2007,Xu2017},   pneumatic actuators \cite{FeiShen2013,Shepherd2011,Tolley14} or motor-tendon actuators \cite{Ume16b,Ume16c} combined with an elastic structure.
The use of robotic soft  crawlers have been proposed for medical intervention, in particular for colonscopy \cite{Bernth17,endotics1,Kassim06}, to reduce the discomfort and trauma in the patient. Moreover, their properties of compliance and resistance to damage \cite{Seok2013,Tolley14} encourage  future usage in reconnaissance missions involving rough terrain and hostile environment.

The \emph{morphological computation} of a soft body, interacting and adapting to the environment,  facilitates the pursuit of \emph{simplexity}, the ability of a system to achieve a complex behaviour with a reduced number of control parameters. This principle allows not only to fully exploit the advantages of a soft body, but also to facilitate the miniaturization of the device.
Our aim is therefore to understand  how a good movement capability can be achieved by the use of just a few basic mechanisms, such as contraction-expansion cycles of a segment. 
The inquiry for minimal mechanisms is, in fact, an active research theme in the study of motility, especially for miniaturized locomotors. We mention here, for instance, the case microswimmers, whose results present many analogies with our models. Indeed, the motility of simple swimmers made by few elements, like segments \cite{Zop13,Zop15} or 
joined spheres \cite{AlougesDeSimoneLefebvre2009,MonDeS17,NajGol2004}, has been extensively analysed.

\paragraph{Friction anisotropy, friction manipulation, and shape change strategy}
We  therefore enquire which are the minimal requisites for a crawler to move. 
Since motility means that a periodic input produces a non-zero net displacement after each period, some  source of asymmetry must be present in the system, in order to define the  direction of motion. Such element may be found in the frictional interaction with the environment, in the shape-change strategy, or be generated by a suitable combination of both.

Let us therefore start by considering friction as source of directionality. The general idea is quite intuitive: we expect stronger friction forces opposing backward motion, thus producing a firm grip that prevents the locomotor to slide in the undesired direction. This scheme can be implemented in two ways: directly, by producing an anisotropic force-velocity friction law, or indirectly, with an active control on the friction coefficients. 

The use of textured, hairy or scaly surfaces, in order to create a frictional anisotropy to improve their motility, is widespread in biological and bio-inspired locomotors \cite{FilGor13,Manoonpong2016}. In biological crawlers, we mention earthworms, which present bristle-like \emph{setae} opposing backward sliding \cite{Quillin1999}. In robotic crawlers, friction anisotropy is introduced using several mechanisms, such as elastic bristles \cite{Jung2007,NosDeS14}, hooks \cite{Men06} or directional clutches \cite{Vikas16}. 

As we show in Section \ref{sec:dep_examples}, an active control of the friction parameters, suitably synchronized with the shape change, can achieve the same effects of friction anisotropy, increasing the grip to oppose a thrust backwards and reducing the friction to slide forward. Having a control on friction has the additional advantage that the same mechanism can be exploited to support locomotion in both direction. Sometimes, such changes in the friction coefficients may be produced conjointly with the changes in shape of the crawler. In cylindrical crawlers with a hydrostatic \cite{Quillin1999} or a mesh-braided body \cite{Manwell14}, a longitudinal contraction is associated to a thickening of the contracting segment; when moving in a tube, this corresponds to an increase in friction. In \cite{Ume13,Vikas15,Vikas16} a friction manipulation mechanisms is used in a crawler with two contact points.  A capsule made of two materials, one hard and slippery while the other soft and sticky, is placed at each of the two extremities of the crawler. Changes in the shape of the crawler, controlled by two parameters, not only produce a contraction-expansion cycle, but also produce a change in the tilt angle at the extremities, determining whether the contact occurs in the hard or soft part of the capsule. Also anchoring mechanisms, employed by caterpillars \cite{GriTri14} and miriapods \cite{Kuroda2014}, can be considered as extreme tools of friction control.  Moreover, as shown in \cite{GidDeS17}, a control on the friction coefficients can be obtained also in the case of anisotropic friction produced by bristles, with a change in the rest configuration or in the stiffness of the bristle. This happens in earthworms, where longitudinal contractions also produce an extension of the setae \cite{Quillin1999}. Finally, also snakes can influence the friction coefficients by changing the attitude of their scales; an analogous mechanisms is used by the crawling robot presented in \cite{scalybot}.

A third way to introduce directionality in the system consists of a sufficiently complex shape change, described by at least two parameters (e.g.\ a circular orbit in the space of shape control parameters). Indeed, in case of anisotropic friction a suitable periodic shape-change may be able to move the crawler also in the direction of higher opposing friction, as shown in \cite{BSZZB17,GidNosDeS14,GidDeS16}. Nevertheless, also in this case anisotropic friction can facilitate motility in the chosen direction, as intuitively imaginable. This is well illustrate, for instance, by the qualitative changes in the motility of the crawler in Example \ref{example:3legs} with respect to the ratio of the directional friction coefficients. It is therefore not surprising that anisotropic friction can be found in natural and robotic crawlers, also when not strictly necessary. On the other hand, we point out that  directionality can be effectively produced by shape change alone only in presence of sufficient nonlinearity in the system. In the framework of our paper, such nonlinearity is provided by Coulomb dry friction. On the other hand, if we replace this assumption with linear (viscous) friction, more appealing by a mathematical point of view, this change would dramatically affect the behaviour of the system. As illustrated by the \lq\lq incompetent crawler\rq\rq\ in \cite{DeSTat12}, locomotors, similar to that presented in Example \ref{example:3legs},  lose their ability to move if we consider viscous friction. To move under viscous friction, a more complex structure is necessary, where sufficient nonlinearity is provided, for instance by the effects on friction of large deformations in continuous models of crawlers, or by non-local hydrodynamic interactions in a three-spheres swimmer \cite{NajGol2004}.

Finally, we mention a fourth way to produce anisotropy in the system, even in the case of isotropic, constant friction and a single shape-parameter. As shown in \cite{WagLau13}, this is possible with a shape change that is not time-reversible, an asymmetric body, and exploiting inertial effects. However, in this case locomotion occurs at high Froude numbers, meaning that inertial forces (and therefore velocities) are much larger than friction forces. This scenario is uncommon in crawlers, where we observe a stick-slip behaviour, instead of the permanent sliding required by such inertial strategies. Likely, this is due to the great energetic costs required by sustaining high velocities. For these reasons, the quasi-static approximation is customary in the analysis of crawling locomotion, and it  is assumed in our framework.


\section{Crawling with time-independent friction}
\label{sec:indep_theory}

\subsection{A general formulation of crawling} \label{sec:sub_indep}
We describe the body of the crawler, in the reference configuration, with a set $\Omega\subset \R^d$.  We will consider both discrete models, where $\Omega$ is a finite collection of points, and continuous models, where $\Omega$ is an interval.
Given a point $\xi\in \Omega$, we denote with $x(t;\xi)$ its displacement at time $t$, meaning that at time $t$ the point $\xi$ of the crawler will be positioned at $\xi+x(t;\xi)$. 
In this way we represent  the displacement of the body of crawler at any time $t$   by a vector $x=x(t;\cdot)\in X$, where $X$ is a suitable Hilbert space.  To distinguish between the  displacement produced by a net translation of the crawler and that associated to an internal shape change, we identify the space $X$ as the product of two subspaces $Z\times Y$, with $Y=\R^d$, and $Z$ a Hilbert space (possibly with a different scalar product). Each vector $x\in X$ can be identified with the couple $(z,y)$, where $y\in Y$ and $z\in Z$. This decomposition has to be interpreted as follows. The vector $y\in Y$ is associated to a generic description of the position of the crawler, e.g.\ the position of its head or of its centre. For most of the paper we will consider the case $d=1$, corresponding to a crawler moving on a line. We will discuss briefly the planar case $d=2$ in Section \ref{sec:planarcrawler}.  The vector $z\in Z$ instead describes the shape of the crawler. In other words,  the absolute displacement $x$ of the different parts of the body of the crawler can be described by the absolute position $y$ of a specific point, together with the relative positions $z$ of the other parts of the crawler. 
The rationale of this decomposition is the invariance of the configurational energy $\EE$ of the system with respect to translations of the crawler, meaning that $\EE$ depends only on the shape $z$ and on a time-dependent load exerted by the crawler itself.

We denote with $\sigma \colon X\to Z$ the map that associates to the displacement $x\in X$ of the crawler its corresponding shape deviation $z\in Z$ from the reference shape. Similarly, we define $\pi \colon X\to Y$, that will describe the net translation $y=\pi(x)\in Y$ of the crawler associated to a displacement $x$. Given a shape deviation $z\in Z$ and a net translation $y\in Y$, we denote the associated displacement of the crawler as $\chi(z,y)\in X$, so that $x=\chi(\sigma(x),\pi(x))$. We assume  that the maps $\sigma$, $\pi$ and $\chi$ are continuous linear operators. We remark that, however, the maps are not necessarily orthogonal and therefore the scalar product on $Z$ is not necessarily the one induced by $X$ through $\sigma$. 
As we will see, this notation is justified by finite dimensional models (cf.~Section \ref{sec:disc_indep}), where it is much more natural and convenient to adopt distinct coordinates on $X$ and on $Z\times Y$, not orthogonal with respect to each other.

The invariance for translations of the internal, or configurational, energy $\EE$ means that it can be expressed only in terms of the current shape, namely
\begin{equation}
\EE(t,x)=\Esh(t, \sigma (x))
\end{equation}
with $\Esh\colon[0,T]\times Z \to \R$. We assume that the shape-restricted energy $\Esh$ has the form
\begin{equation} \label{eq:ener_shape}
\Esh(t,z)= \scal{\Ash z}{z}-\scal{\lsh(t)}{z}
\end{equation}
where $\Ash \colon Z\to Z^*$ is a symmetric linear positive definite operator and $\lsh(t)\colon [0,T]\to Z^*$ is  continuously differentiable and with values in the subspace $Z$. Here, for a generic Hilbert space $\mathcal X$, we are denoting with $\mathcal X^*$ the dual space of $\mathcal X$ and with $\scal{\cdot}{\cdot}\colon \mathcal X^*\times \mathcal X\to \R$ the corresponding dual pairing. 

As a consequence, the the configurational energy, expressed in terms of the displacement $x$, takes the form
\begin{equation} \label{eq:energy}
\EE(t,x)= \scal{\A x}{x}-\scal{\ell(t)}{x}
\end{equation}
Here $\A\colon X \to X^*$ is a linear symmetric positive-semidefinite operator, defined by $\scal{\A x}{u}=\scal{\Ash \sigma(x)}{\sigma (u)}$ for every $x,u\in X$. As a consequence, $\ker \A$ corresponds  to pure  translations, namely
$x\in \ker \A$ if and only if $\sigma(x)=0$. Analogously, $\ell(t)\colon [0,T]\to X^*$ is defined as $\scal{\ell(t)}{x}=\scal{\lsh(t)}{\sigma(x)}$ for every $x\in X$
.

\medbreak
We assume a rate-independent dissipation, that in our case will be produced by Coulomb dry friction, so that the forces are described by a dissipation potential $\RR\colon X\to [0,+\infty]$, that we assume proper, convex, lower semi-continuous, and positively homogeneous of degree one. We recall that a function $f\colon X\to [0,+\infty]$ is \emph{proper} if it assumes a finite value in at least one point.

Thus the evolution of the system is described by the force balance
\begin{equation}
0\in \partial \RR(\dot x(t))+D_x\EE(t,x)
\label{eq:EvP_indep}
\end{equation}
where $D_x$ denotes the gradient with respect to the $x$ variables, while $\partial \RR$ is the subdifferential of $\RR$, that we recall being defined as
\begin{equation*}
\partial \RR (\bar x)=\left\{\xi\in X^* \colon \RR (x)\geq \RR (\bar x)+\scal{\xi}{x-\bar x} \quad \text{for every $x\in X$}\right\}
\end{equation*}

 We say that a continuous function $x\colon [0,T]\to X$, differentiable for almost all $t\in(0,T)$, is a \emph{solution} of the problem \eqref{eq:EvP_indep} with starting point $x_0$ if $x(0)=x_0$ and  $x(t)$  satisfies \eqref{eq:EvP_indep} for almost all $t\in(0,T)$. 

Since $\RR$ is positively homogeneous of degree one, we have that $\partial \RR(\xi)\subseteq \partial \RR(0)$, where $\partial \RR(0)$ is convex and compact, by the convexity and coercivity of $\RR$. We immediately see that 
\begin{equation*}
-D_x\EE(t,x)\in \partial \RR(\dot x(t))\subseteq \partial \RR(0)
\end{equation*}
and in particular that the initial point $x_0$ must satisfy
\begin{equation}
-D_x\EE(0,x_0)\in  \partial \RR(0)=\C
\label{eq:RISadmisgen}
\end{equation}
We will call such initial points $x_0$ \emph{admissible} for the problem.

The evolution of systems of the form \eqref{eq:EvP_indep} is widely studied; in particular we recall the following classic result (cf.~\cite[Theorem 2.1]{Mie05}).
\begin{theorem}\label{th:mielke}
	Let us consider the evolution of a system \eqref{eq:EvP_indep}, where $\EE$ is of the form \eqref{eq:energy}, with $\A\colon X \to X^*$ symmetric positive-semidefinite, $\ell(t)\colon [0,T]\to X^*$ continuously differentiable, and $\RR\colon X\to [0,+\infty]$  proper, convex, lower semi-continuous and positively homogeneous of degree one. For every initial point $x_0$ satisfying \eqref{eq:RISadmisgen} there exists at least a solution $x\in\Clip([0,T],X)$, with $x(0)=x_0$, that satisfies \eqref{eq:EvP_indep} for almost all $t\in [0,T]$.\\
		Moreover, if $\A$ is positive-definite, then such solution is unique.
\end{theorem}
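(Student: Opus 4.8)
The plan is to establish existence by \emph{time-incremental minimization} (the minimizing-movements / Rothe scheme) and uniqueness by a monotonicity argument based on the positive-definiteness of $\A$ — the route that is classical for rate-independent systems with quadratic energy. The only non-standard point, requiring care, is that $\EE(t,\cdot)$ fails to be coercive on $X$, being flat along $\ker\A$ (the pure translations). For existence, fix $N\in\N$, set $\tau = T/N$, $t_k^\tau = k\tau$, put $x_0^\tau := x_0$ and
\[
x_k^\tau \in \argmin_{x\in X}\Bigl\{\,\EE(t_k^\tau,x) + \RR\bigl(x - x_{k-1}^\tau\bigr)\,\Bigr\},\qquad k = 1,\dots,N .
\]
This functional is proper, convex and lower semicontinuous, so the issue is only its lack of coercivity, which is resolved by the structure of the load. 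Writing $x = r + v$ with $v\in\ker\A$ and $r$ in a fixed complement, symmetry of $\A$ gives $\scal{\A x}{x} = \scal{\A r}{r}$, and, since $\scal{\ell(t)}{x} = \scal{\lsh(t)}{\sigma(x)}$ with $\sigma$ annihilating $\ker\A$, also $\scal{\ell(t_k^\tau)}{x} = \scal{\ell(t_k^\tau)}{r}$. Hence the functional equals $\scal{\A r}{r} - \scal{\ell(t_k^\tau)}{r}$ — coercive in $r$ transverse to translations — plus $\RR(r + v - x_{k-1}^\tau)\ge 0$, which is bounded below and, using the coercivity of $\RR$ that makes $\C=\partial\RR(0)$ compact, proper in $v$. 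The direct method then yields a minimizer $x_k^\tau$, whose Euler--Lagrange inclusion is exactly the incremental force balance $0\in\partial\RR(x_k^\tau - x_{k-1}^\tau) + D_x\EE(t_k^\tau,x_k^\tau)$; in particular each node is \emph{stable}, $-D_x\EE(t_k^\tau,x_k^\tau)\in\C$, which for the convex $\EE$ is equivalent to global stability.

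Next come the a priori estimate and the passage to the limit. Comparing the minimality at step $k$ with the competitor $x_{k-1}^\tau$, and combining it with the stability at step $k-1$ tested against $x_k^\tau - x_{k-1}^\tau$, one gets the standard discrete inequality bounding $\scal{\A(x_k^\tau - x_{k-1}^\tau)}{x_k^\tau - x_{k-1}^\tau}$ by $\scal{\ell(t_k^\tau)-\ell(t_{k-1}^\tau)}{x_k^\tau - x_{k-1}^\tau}$ up to dissipation terms; since $t\mapsto\ell(t)$ is $C^1$ with increments lying in the subspace on which $\scal{\A\cdot}{\cdot}$ is coercive, and $\RR$ is coercive, this upgrades to a $\tau$-uniform Lipschitz estimate $\abs{x_k^\tau - x_{k-1}^\tau}\le L\,\abs{t_k^\tau - t_{k-1}^\tau}$. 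The piecewise-affine interpolants $x^\tau$ are then uniformly bounded and $L$-Lipschitz, so along a subsequence $x^\tau\to x$ uniformly with $x\in\Clip([0,T],X)$ and $x(0)=x_0$; passing to the limit in the discrete stability inequality and the discrete upper energy estimate — using the lower semicontinuity of $\RR$ and the affine continuity of $D_x\EE$ — shows that $x$ is stable and satisfies the energy--dissipation balance for a.e.\ $t$, which for the quadratic $\EE$ is equivalent to \eqref{eq:EvP_indep}. I expect the genuinely delicate steps to be precisely this solvability of the incremental problems and this uniform bound: both fail for a general positive-semidefinite $\A$ and are rescued only by the fact that $\ell(t)$ does no work on translations, i.e.\ $\scal{\ell(t)}{v}=0$ for every $v\in\ker\A$.

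For uniqueness when $\A$ is positive definite, let $x_1,x_2$ be two solutions with $x_1(0)=x_2(0)=x_0$. For a.e.\ $t$ one has $-D_x\EE(t,x_i(t))\in\partial\RR(\dot x_i(t))$, so monotonicity of $\partial\RR$ yields
\[
\scal{D_x\EE(t,x_2(t)) - D_x\EE(t,x_1(t))}{\dot x_1(t)-\dot x_2(t)}\ge 0 .
\]
Since $D_x\EE(t,x)=2\A x - \ell(t)$, the left-hand side equals $-2\scal{\A(x_1-x_2)}{\tfrac{\dd}{\dd t}(x_1-x_2)} = -\tfrac{\dd}{\dd t}\scal{\A(x_1-x_2)}{x_1-x_2}$, so $t\mapsto\scal{\A(x_1-x_2)}{x_1-x_2}$ is nonincreasing; being nonnegative and vanishing at $t=0$, it vanishes identically, and positive-definiteness of $\A$ forces $x_1\equiv x_2$. (For a merely semidefinite $\A$ the same computation gives only $x_1(t)-x_2(t)\in\ker\A$, i.e.\ equality of shapes, consistently with the multiplicity of the net translation discussed in the Introduction.)
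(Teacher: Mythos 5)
The paper does not actually prove Theorem \ref{th:mielke}: it quotes it as a classical result, citing \cite[Theorem 2.1]{Mie05}, so your argument is not competing with a proof in the text but reconstructing the standard one. What you write is indeed the canonical incremental-minimization proof, and both halves are essentially sound: the uniqueness computation via monotonicity of $\partial\RR$ is complete and correct, and you have put your finger on exactly the right structural fact --- that $\ell(t)$ does no work on $\ker\A$ --- which is what rescues both the solvability of the incremental problems and the a priori bound despite the degeneracy of $\A$ (in the paper this holds because $\scal{\ell(t)}{x}=\scal{\lsh(t)}{\sigma(x)}$ and $\ker\A=\ker\sigma$). Two steps are thinner than they should be. First, you invoke coercivity (and tacitly finiteness) of $\RR$, which is not among the literal hypotheses ($\RR$ is only proper, convex, l.s.c., $1$-homogeneous with values in $[0,+\infty]$); the paper makes the same tacit assumption when it asserts compactness of $\C=\partial\RR(0)$, and it holds in every model considered, but it should be flagged. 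Second, the phrase ``this upgrades to a $\tau$-uniform Lipschitz estimate'' papers over the only genuinely delicate point: the discrete inequality $\scal{\A(x_k^\tau-x_{k-1}^\tau)}{x_k^\tau-x_{k-1}^\tau}\le\scal{\ell(t_k^\tau)-\ell(t_{k-1}^\tau)}{x_k^\tau-x_{k-1}^\tau}$ controls only the component of the increment transverse to $\ker\A$ (and only if $\scal{\A\,\cdot}{\cdot}$ is coercive on a complement of its kernel --- automatic in finite dimensions, a Poincar\'e-type fact in the continuous models). To bound the $\ker\A$-component one must add that, by minimality and translation-invariance of $\EE$, the kernel part of each increment minimizes $\RR$ over its coset of $\ker\A$; coercivity and $1$-homogeneity of $\RR$ then yield a bound of the form $\norm{v_k-v_{k-1}}\le C\norm{r_k-r_{k-1}}$, which closes the estimate. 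With these two points made explicit the proof is complete; it is worth noting that this same selection issue is precisely what condition \ref{cond:symmbreak} later converts into the uniqueness statement of Theorem \ref{th:indep}.
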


As Theorem \ref{th:mielke} shows, our problem admits always at least a solution for every admissible initial point. However nothing can be said about its uniqueness: indeed,  as we show below in Example \ref{ex:2legs}, it is easy to construct example where we have multiple solutions. Such eventuality is clearly not suitable from a physical point of view. To exclude these situations and guarantee the uniqueness of the solution, we require an additional  condition on the dissipation potential.

\subsection{Uniqueness of the solution}

We now  state our uniqueness assumption. \emph{
	\begin{enumerate}[label=\textup{(\textasteriskcentered)}]
		\item For every $\tilde z\in Z$ there exists a unique $\tilde x\in X$ such that $\sigma(\tilde x)=\tilde z$ and \label{cond:symmbreak}
		\begin{equation*}
		\RR(x)-\RR(\tilde x)\geq 0 \qquad \text{for every $x\in X$ with $\sigma(x)=\tilde z$}
		\end{equation*}
\end{enumerate}}

We will discuss with more detail the nature of this assumption later and case by case.
Let us highlight here a simple consequence of \ref{cond:symmbreak}. For $\sigma (x)=0$, this assumption  tells us that moving the crawler without changing its shape dissipates a non-null energy: this assure us that, in some way, friction is present in all direction.

\begin{theorem}\label{th:indep}
	In the framework of Section \ref{sec:sub_indep},	given $\ell \in \CC^1([0,T],X)$ and an admissible initial point $x_0$, assuming that the uniqueness condition \ref{cond:symmbreak} is satisfied,  there exists a unique solution $x\in\Clip([0,T],X)$ of the evolution problem \eqref{eq:EvP_indep} that satisfies the initial condition $x(0)=x_0$.
\end{theorem}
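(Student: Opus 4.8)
The plan is to split the statement into two independent assertions: that the \emph{shape} $\sigma(x(t))$ is uniquely determined, already by the quadratic and translation-invariant structure of $\EE$ (without any help from \ref{cond:symmbreak}); and that, once the shape is fixed, the only remaining degree of freedom—a pure translation, i.e.\ a motion in $\ker\A$—is pinned down by the minimality property \ref{cond:symmbreak} of $\RR$. Existence is already granted by Theorem \ref{th:mielke}, so I would take two solutions $x_1,x_2\in\Clip([0,T],X)$ with $x_1(0)=x_2(0)=x_0$ and prove $x_1\equiv x_2$.

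\emph{First step: the shapes coincide.} Writing the force balance \eqref{eq:EvP_indep} at $x_1$ and at $x_2$ and testing the two subdifferential inequalities against $\dot x_2(t)-\dot x_1(t)$ and $\dot x_1(t)-\dot x_2(t)$ respectively, the $\RR$-terms cancel upon summation and one is left with
\[
\langle D_x\EE(t,x_1(t))-D_x\EE(t,x_2(t)),\ \dot x_1(t)-\dot x_2(t)\rangle\ \le\ 0\qquad\text{for a.e.\ }t.
\]
Since $\EE$ is quadratic, $D_x\EE(t,x)=2\A x-\ell(t)$, and as $\A$ is symmetric the left-hand side equals $\tfrac{d}{dt}\langle\A(x_1(t)-x_2(t)),x_1(t)-x_2(t)\rangle$ (here one uses that $x_1-x_2$ is Lipschitz, so this scalar function is absolutely continuous). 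Integrating from $0$ and using the common initial datum gives $\langle\A(x_1(t)-x_2(t)),x_1(t)-x_2(t)\rangle\le0$ for every $t$; positive-semidefiniteness of $\A$ then forces $x_1(t)-x_2(t)\in\ker\A$, i.e.\ $\sigma(x_1(t))=\sigma(x_2(t))=:z(t)$ for all $t$. This is exactly the classical uniqueness computation behind Theorem \ref{th:mielke}, with positive-definiteness of $\A$ replaced by the weaker conclusion that only the shape, not the full displacement, is controlled.

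\emph{Second step: the translations coincide.} Since $\langle D_x\EE(t,x),u\rangle=2\langle\Ash\sigma(x)\rangle\!,\sigma(u)\rangle-\langle\lsh(t),\sigma(u)\rangle$ depends on $x$ and on the test vector $u$ only through their shapes, the covector $\xi(t):=D_x\EE(t,x_1(t))=D_x\EE(t,x_2(t))$ is the same for both solutions, and the linear functional $u\mapsto\langle\xi(t),u\rangle$ is \emph{constant on each fibre} $\{u\in X:\sigma(u)=w\}$. From $-\xi(t)\in\partial\RR(\dot x_i(t))$ it follows that $\dot x_i(t)$ minimizes $v\mapsto\RR(v)+\langle\xi(t),v\rangle$ over all of $X$; restricting to the fibre $\{v:\sigma(v)=\sigma(\dot x_i(t))\}$, on which the linear term is a constant, both $\dot x_1(t)$ and $\dot x_2(t)$ minimize $\RR$ over that fibre. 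Differentiating $\sigma(x_1(t))=\sigma(x_2(t))$ shows the two fibres coincide, with common shape-velocity $\dot z(t)=\sigma(\dot x_1(t))=\sigma(\dot x_2(t))\in Z$ for a.e.\ $t$. Hence \ref{cond:symmbreak}, applied with $\tilde z=\dot z(t)$, yields $\dot x_1(t)=\dot x_2(t)$ for a.e.\ $t$; integrating and using $x_1(0)=x_2(0)$ gives $x_1\equiv x_2$.

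I do not expect a serious obstacle: the delicate points are only the standard measure-theoretic bookkeeping for the Lipschitz curves $x_i$ and $z=\sigma\circ x_i$ (a.e.\ differentiability, the chain rule and fundamental theorem of calculus invoked in both steps), and checking that $\dot z(t)$ is an admissible argument of \ref{cond:symmbreak} off a null set. The conceptual core—and the reason \ref{cond:symmbreak} is precisely the right hypothesis—is the observation in the second step that the configurational force is blind to pure translations, so that the otherwise under-determined selection of $\dot x$ inside a shape-fibre is decided purely by the dissipation potential.
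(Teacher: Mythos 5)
Your proposal is correct, and it takes a genuinely different route from the paper. The paper proves uniqueness by \emph{reduction}: it passes to the shape-restricted variational inequality \eqref{eq:RVI} on $Z$, verifies via Lemma \ref{lem:propDsh} that the reduced dissipation $\Dsh(w)=\inf_{v}\RR(\chi(w,v))$ is convex, lower semi-continuous and $1$-homogeneous, applies the uniqueness part of Theorem \ref{th:mielke} to the coercive reduced problem (where $\Ash$ is positive-definite), and then recovers $y(t)$ by integrating $\vm(\dot z(t))$. You instead compare two solutions directly: the standard monotonicity estimate (the same computation that underlies uniqueness in Theorem \ref{th:mielke}) yields $\tfrac{d}{dt}\langle\A(x_1-x_2),x_1-x_2\rangle\le 0$, which with semidefiniteness of $\A$ only controls the shape difference; you then observe that the configurational force $\xi(t)=2\A x-\ell(t)$ lies in $\hat Z^*$, so both $\dot x_1(t)$ and $\dot x_2(t)$ minimize $\RR$ on the common fibre $\sigma^{-1}(\dot z(t))$, and \ref{cond:symmbreak} finishes the job. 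Your argument is more elementary and self-contained for the purpose of uniqueness — it needs neither $\Dsh$, nor $\vm$, nor Lemma \ref{lem:propDsh} — while the paper's reduction buys more than uniqueness: it produces the shape-restricted problem and the stasis domain $\Csh$ that are used throughout the rest of the paper, together with the explicit reconstruction formula for $y(t)$. The only points to make explicit in a polished write-up are the ones you already flag: the a.e.\ chain rule and the fundamental theorem of calculus for the Lipschitz curves $x_i$ (absolute continuity of $t\mapsto\langle\A(x_1-x_2),x_1-x_2\rangle$), the fact that $\langle\A w,w\rangle=0$ forces $\sigma(w)=0$ because $\Ash$ is positive-definite, and the finiteness of $\RR(\dot x_i(t))$ (automatic since the subdifferential is nonempty there), none of which is an obstacle.
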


\begin{proof}

The idea of the proof is to show that \eqref{eq:EvP_indep} can be solved by restricting to a suitable evolution problem on the shape space $Z$, satisfying all the conditions of Theorem \ref{th:mielke} including those for uniqueness of the solution, and that the evolution in the net-translation space $Y$ can be recovered uniquely by that on the shape space $Z$.

Let us recall that,  for given external load $\ell (t)$, the evolution \eqref{eq:EvP_indep} of our system can be expressed as a solution of the variational inequality
\begin{equation}
\scal{\A x(t)-\ell (t)}{u-\dot x(t)}+\RR(u)-\RR(\dot x(t))\geq 0 \quad \text{for every $u\in X$} 
\label{eq:VI}
\end{equation}
(cf.\ for instance \cite{Mie04}).
In particular this must hold when $\sigma(u)=\sigma (\dot x(t))$: for this class of functions $u$ the inequality becomes
\begin{equation}\label{xeq:dissipation_estimate}
\RR\left(\chi\bigl(\sigma(\dot x(t)),v\bigr) \right)-\RR(\dot x(t))\geq 0 \qquad \text{for every $v\in Y$}
\end{equation}
meaning that $\dot x(t)$ minimizes $\RR$ among the class of displacements $u\in X$ such that $\sigma(u)=\sigma(\dot x(t))$.

We notice that, if the uniqueness condition \ref{cond:symmbreak} holds, then the time derivative of the net translation $\dot y(t)$ can be univocally recovered as a function of that of the shape change $\dot z(t)$, namely
\begin{equation} 
\dot y(t)=\vm (\dot z(t)):=\argmin_{v\in Y}\ \RR(\chi(\dot z(t),v))
\label{eq:vm_indep}
\end{equation}
We remark that the function $\vm\colon Z\to Y$, thus defined, is positively homogeneous of degree one. 

We can use the notion of $\vm$ to reduce the dimension of the problem associated to the variational inequality \eqref{eq:VI}, using also the fact that the energy depends only on the shape component. Indeed, $x(t)$ is a solution of \eqref{eq:VI} only if $z(t)=\sigma(x(t))$ satisfies:
\begin{equation}
\scal{\Ash z(t)-\lsh (t)}{w-\dot z(t)}+\Dsh (w)-\Dsh (\dot z(t))\geq 0 \quad \text{for every $w\in Z$} 
\label{eq:RVI}
\end{equation}
where $\Dsh $ is the \lq\lq shape-restricted\rq\rq\ dissipation, i.e.~the dissipation after minimization with respect to translations of the crawler, defined as
\begin{equation} \label{eq:Dsh_indep}
\Dsh (w)=\inf_{v\in Y} \RR\bigl(\chi(w,v)\bigr)
\end{equation}
We observe that, due to assumption \ref{cond:symmbreak}, we can write $\Dsh$ as
\begin{equation} \label{xeq:Dsh_minimized}
\Dsh (w)=\RR\bigl(\chi(w,\vm(w))\bigr)
\end{equation}
This allows us to study the system for the shape changes alone and then recover the displacement $y(t)$ of the crawler through the relationship \eqref{eq:vm_indep}.

The properties of this new dissipation functional are enounced in the following lemma, that we state under slightly weaker assumptions than \ref{cond:symmbreak}, and prove after the conclusion of this proof, at page \pageref{proof_lemmaA}.
\begin{lemma} \label{lem:propDsh}
	Let us assume that, for every $w\in Z$, the function $\RR(\chi(w,\cdot))\colon Y\to \R$ is coercive.  Then $\Dsh$ is convex, lower semi-continuous and positively homogeneous of degree one.
\end{lemma}
The hypotheses of the Lemma imply the existence of a function $\vm\colon Z\to Y$,  positively homogeneous of degree one, such that the shape-restricted dissipation function $\Dsh $ of \eqref{eq:Dsh_indep} can be written in the form \eqref{xeq:Dsh_minimized}. Yet, the uniqueness of such a function $\vm$ is not required, as in \ref{cond:symmbreak}. 

\medbreak
Observing that, by the Lemma, the potential $\Dsh$ has all the desired properties, we can restate the shape-restricted variational inequality \eqref{eq:RVI} in the \emph{subdifferential formulation} of the problem, namely
\begin{equation} \label{eq:SF}
0\in \partial \Dsh (\dot z(t))+D_z\Esh(t,z(t)) 
\end{equation}
 In this case, the admissibility condition for the initial point becomes
\begin{equation}
-D_z\Esh(0,z_0)\in \partial\Dsh(0):=\Csh
\end{equation}
We observe that this condition is automatically satisfied if \eqref{eq:RISadmisgen} is true.  Indeed, we have the following characterization of $\Csh$.

\begin{lemma} \label{lemma:Csh_immersion}
 Given $\zeta \in Z^*$, let us denote with $\hat \zeta$ its canonical identification in $X^*$, namely the element $\hat \zeta\in X^*$ such that  $ \langle\hat \zeta, x\rangle=\scal{\zeta}{\sigma (x)} $ for every $x\in X$.
 Similarly, we define the subspace $\hat Z^*\subset X^*$ and the set $\hCsh \subset \hat Z^*$ as the canonical identifications of $Z^*$ and $\Csh$.
 Then $\hCsh = \C\cap \hat Z^*$.
\end{lemma}
We remark that  this characterization does not require \ref{cond:symmbreak} and the same assumptions of Lemma \ref{lem:propDsh} are sufficient.
We postpone the proof of the Lemma at the end of this section, at page \pageref{proof_lemmaB}.

We have therefore shown that, provided \ref{cond:symmbreak}, we can pass from our original problem \eqref{eq:EvP_indep}, where the invariance for translations of the energy $\EE$ does not assure the uniqueness of the solution, to the shape-restricted problem \eqref{eq:SF}, for which the energy $\Esh$ is definite-positive. Thus, we can apply Theorem \ref{th:mielke} and obtain that there exists an unique solution $z(t)$ of shape evolution problem \eqref{eq:SF}. 

Let us now consider any solution $\bar x(t)=\chi (\bar z, \bar y)$ of \eqref{eq:EvP_indep}: Theorem \ref{th:mielke} assures that there exists at least one. We have shown that $\bar z$ must coincide with the unique solution of \eqref{eq:SF}. We also know that $\dot{\bar y}$ is defined almost everywhere and integrable, being the derivative of a Lipschitz function. 
Combining this fact with \eqref{eq:vm_indep}, we deduce that also the global displacement of the crawler $\bar y(t)$ is uniquely defined as
\begin{equation}
	\bar y(t):=y_0 + \int_{0}^{t} \vm(\dot{\bar{z}}(s)) \dd s
\end{equation}
where $y_0=\pi (x_0)$.  This shows that, given \ref{cond:symmbreak}, the solution of \eqref{eq:EvP_indep} is unique.
\end{proof}


\begin{proof}[Proof of Lemma \ref{lem:propDsh}]\label{proof_lemmaA}
	Let us recall that $w\mapsto \vm(w)$ is positively homogeneous of degree one and that $\chi$ is linear. Hence, for $\lambda>0$
	\begin{align*}
	\Dsh (\lambda w)&=\RR\bigl(\chi(\lambda w, \vm(\lambda w))\bigr)=\RR\bigl( \chi(\lambda w, \lambda \vm(w))\bigr) 
	=\lambda\RR\bigl( \chi( w, \vm(w))\bigr) \\
	&=\lambda \Dsh (w)
	\end{align*}
	Thus, $\Dsh $ is positively homogeneous of degree one.
	
	To show that it is lower semi-continuous, let us suppose by contradiction that there exists a sequence $z_n\to z$ in $Z$ such that $\lim \Dsh (z_n)<\Dsh (z)$. Let us denote $y_n=\vm(z_n)$. We can always take a subsequence $y_{n_k}$ such that either $y_{n_k}\to y\in \R^d$, or $\norm{y_{n_k}}\to+\infty$ and $\left( y_{n_k}/\norm{y_{n_k}}\right) \to y_\infty\in \R^d\setminus\{0\}$.
 If $y_{n_k}\to y\in \R^d$, then, by the lower semi-continuity of $\RR$, we obtain
	$$
	\Dsh(z) \leq \RR (\chi(z,y)) \leq \lim_k \RR (\chi(z_{n_k},y_{n_k})) =\lim_k \Dsh (z_{n_k})=\lim_n \Dsh (z_{n})
	$$
	that contradicts $\lim \Dsh (z_{n})<\Dsh (z)$. If instead $\norm{y_{n_k}}\to+\infty$, let us introduce the normalized sequence
	$$
	x_k=\frac{\chi(z_{n_k},y_{n_k})}{\norm{\chi(z_{n_k},y_{n_k})}}\in X
	$$
	We notice that $\norm{\chi(z_{n_k},y_{n_k})}\to +\infty$ and $x_k\to \chi (0,y_\infty)$. Thus, recalling that $\RR$ is positively homogeneous,
	$$
	\RR (\chi (0,y_\infty)) \leq \liminf \RR(x_k)
	\leq \liminf \frac{\RR (\chi(z_{n_k},y_{n_k}))}{\norm{\chi(z_{n_k},y_{n_k})}}
	\leq \liminf \frac{\Dsh (z_{n_k})}{\norm{\chi(z_{n_k},y_{n_k})}} = 0
	$$
	since we assumed $\Dsh (z_{n_k})$ bounded. We notice that the restricted potential  $\RR (\chi (0,\cdot))$ is not only convex and coercive, but also positively homogeneous of degree one, and therefore has an unique minimum in $y=0$. This is in contradiction with $\RR (\chi (0,y_\infty))\leq 0$; we can therefore deduce that $\Dsh$ is lower semi-continuous. 
	
	To prove the convexity of $\Dsh $, we observe that for every  $0\leq \lambda \leq 1$, writing $w_\lambda=\lambda w + (1-\lambda \bar w)$, we have
	\begin{align}
	\lambda \Dsh (w)+(1-\lambda)\Dsh  (\bar w) &\geq 
	\lambda \RR\bigl(\chi (w,\vm(w))\bigr) +(1-\lambda)\RR \bigl(\chi (\bar w,\vm(\bar w))\bigr) \notag\\
	&\geq \RR\bigl(\chi (w_\lambda, \lambda\vm(w)+(1-\lambda)\vm(\bar w))\bigr) \notag\\
	&\geq \RR\bigl(\chi (w_\lambda, \vm(w_\lambda))\bigr) =\Dsh (w_\lambda)
	\end{align}	
\end{proof}

\begin{remark}[Stasis domains] \label{rem:SD}
	The reformulation of the problem in the proof of Theorem \ref{th:indep} shows that the set $\hCsh$ has a key role in the description of the problem. First of all we observe that 
$$
-D_x\EE(t,x)\in \hCsh
$$
where the vector on the left hand-side provides a description of the tension along the crawler. Unlike with $\C$, all the states in $\hCsh$ can actually be attained by the tension vector (for a suitable choice of $\ell$). Moreover, equation \eqref{eq:SF} implies that all the changes in the tension of the crawler within the interior (with respect to $\hat Z^*$)	of $\hCsh$ occur always without any movement of the crawler. Thus, to move, the crawler must necessarily reach one of the tension states at the boundary of $\hCsh$ (with respect to $\hat Z^*$). 
For this reason we refer to $\Csh$ as \emph{stasis domain} of the crawler \cite{GidDeS16}, in analogy to the elastic domains in elastoplasticity. 
	
\end{remark}

\begin{remark} \label{rem:lipschitz}
	We remark that in Theorems \ref{th:mielke} and \ref{th:indep} it is sufficient to require $\ell(t)$ to be only continuous and piecewice continuosly differentiable: in such a case we can apply iteratively the theorem to each time interval in which $\ell\in \CC^1$, picking as initial condition the value of the solution at the end of the previous interval. Such a weaker requirement allows to consider, for instance, a triangle wave input, cf.~\cite{DeSGidNos15}.
\end{remark}

Let us recall here some useful facts  (cf.\ for instance \cite{RocWets}). For  a proper, lower semicontinuous, and
convex functional $F\colon V\to \R\cup \{+\infty\}$, where $V$ is a Banach space, we can define the 
\emph{Legendre--Fenchel transform} $F^*\colon V^*\to \R\cup \{+\infty\}$ of $F$ as
\begin{equation}
F^*(\xi)=\sup\{\scal{\xi}{v}-F(v): v\in V\}
\end{equation}
The functional $F^*$ is is proper, lower semi-continuous and convex; moreover we have the following equivalence for subdifferentials
\begin{equation}\label{eq:LegendreFenchel}
\xi \in \partial F(v)  \qquad \text{if and only if} \qquad v\in \partial F^*(\xi)
\end{equation}

Now we apply this fact to the evolution problem \eqref{eq:EvP_indep}.  

Since the functional $\RR$ is positively homogeneous, we can characterize it in terms of $\C$. A generalized Euler identity (cf.~\cite{YangWei08}) tells us that
\begin{equation}
	\RR(x)=\scal{\xi}{x} \qquad \text{for every $\xi \in \partial \RR(x)$} 
\end{equation}
Since $\partial \RR(x)\subseteq \partial \RR(0)$, we deduce that
\begin{equation} \label{eq:R_fenchelchar}
	\RR(x)=\max_{\xi\in \C}\scal{\xi}{x}
\end{equation}
As a consequence of the notion of normal cone, we have that
\begin{equation} \label{xeq:normal_char}
	\RR(x)=\scal{\xi}{x} \qquad \text{if and only if} \qquad x\in \NN_\C(\xi)
\end{equation}
where $\NN_\C (\xi)$ denotes the normal cone with respect to the set $\C$ in the point $\xi$. We recall that, for a convex set $\C\in X^*$, we define the normal cone $\NN_\C (\xi)\subset X$ as the set of $x\in X$ such that $\langle\tilde\xi-\xi,x\rangle\leq 0$ for every $\tilde \xi\in \C$.

Moreover, we know that
\begin{equation*}
\RR^*(\xi)=I_\C(\xi) \qquad \partial\RR^*(\xi)=\partial I_\C(\xi)=\NN_\C (\xi)
\end{equation*}
Here, we denoted with $I_\C$ the characteristic function of the set $\C$ (i.e.~ the function having value zero if $\xi\in\C$ and $+\infty$ elsewhere)

These facts can be used to obtain a first characterization of condition \ref{cond:symmbreak} and to associate the compatible direction of motion to the boundary points of a stasis domain. 
\begin{lemma} \label{lemma:SB_normalcone}	
	Let us assume that, for every $w\in Z$, the function $\RR(\chi(w,\cdot))\colon Y\to \R$ is coercive. Then, condition \ref{cond:symmbreak} is equivalent to the following:
for every $\xi \in \hCsh$ and every $x_1,x_2\in \NN_{\C}(\xi)$, we have	$\sigma(x_1)=\sigma(x_2)$ if and only if $x_1=x_2$.
\end{lemma}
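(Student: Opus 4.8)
The plan is to route the proof through a single auxiliary characterization of the \emph{fibre-minimizers} of $\RR$. For $z\in Z$ write $\sigma^{-1}(z)=\{x\in X:\sigma(x)=z\}$; recalling $x=\chi(\sigma(x),\pi(x))$, this fibre equals $\{\chi(z,v):v\in Y\}$, so by \eqref{eq:Dsh_indep} one has $\inf_{x\in\sigma^{-1}(z)}\RR(x)=\Dsh(z)$, and by the coercivity of $\RR(\chi(z,\cdot))$ on $Y=\R^d$ together with the lower semicontinuity of $\RR$ the infimum is attained. Condition \ref{cond:symmbreak} is then precisely the statement that for every $z$ the fibre-minimizer is \emph{unique} (existence being automatic), while the asserted alternative condition is precisely the statement that $\sigma$ is \emph{injective} on $\NN_\C(\xi)$ for every $\xi\in\hCsh$ (the inner "if" in the formulation being trivial). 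So it suffices to prove the equivalence of these two statements.

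The crux is the following \textbf{Key Lemma}: for $x$ with $z=\sigma(x)$, the conditions \emph{(i)} $\RR(x)=\Dsh(z)$, i.e.\ $x$ minimizes $\RR$ over $\sigma^{-1}(z)$, and \emph{(ii)} $x\in\NN_\C(\xi)$ for some $\xi\in\hCsh$, are equivalent. For \emph{(ii)}$\Rightarrow$\emph{(i)}: writing $\xi=\hat\zeta$ with $\hat\zeta\in\hCsh\subseteq\C$, \eqref{xeq:normal_char} gives $\RR(x)=\scal{\xi}{x}$, and for every $x'\in\sigma^{-1}(z)$, \eqref{eq:R_fenchelchar} and $\xi\in\C$ give $\RR(x')\ge\scal{\xi}{x'}=\scal{\zeta}{\sigma(x')}=\scal{\zeta}{z}=\scal{\xi}{x}=\RR(x)$. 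For \emph{(i)}$\Rightarrow$\emph{(ii)}: since $\C=\partial\RR(0)$ is compact, $\RR=\max_{\xi\in\C}\scal{\xi}{\cdot}$ is finite everywhere, hence so is $\Dsh$; thus $\Dsh$ is continuous, $\Csh=\partial\Dsh(0)$ is nonempty and compact, and the generalized Euler identity used for \eqref{eq:R_fenchelchar}, now applied to $\Dsh$ (which by Lemma \ref{lem:propDsh} is proper, convex, l.s.c.\ and positively homogeneous of degree one), yields $\Dsh(z)=\max_{\zeta\in\Csh}\scal{\zeta}{z}$. Picking a maximizer $\zeta_0$ and setting $\xi:=\hat\zeta_0$, Lemma \ref{lemma:Csh_immersion} gives $\xi\in\hCsh\subseteq\C$, while $\scal{\xi}{x}=\scal{\zeta_0}{z}=\Dsh(z)=\RR(x)$, so $x\in\NN_\C(\xi)$ by \eqref{xeq:normal_char}. (Equivalently, \emph{(i)}$\Leftrightarrow$\emph{(ii)} can be read through $\xi\in\partial\RR(x)\Leftrightarrow x\in\partial\RR^*(\xi)=\NN_\C(\xi)$ together with $\partial\RR(x)\subseteq\C$ and $(\ker\sigma)^\perp=\hat Z^*$, i.e.\ as the existence of a Lagrange multiplier for the linearly constrained minimization of $\RR$.)

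With the Key Lemma in hand the equivalence is short. ($\Rightarrow$) Assume \ref{cond:symmbreak}; if $\xi\in\hCsh$ and $x_1,x_2\in\NN_\C(\xi)$ with $\sigma(x_1)=\sigma(x_2)$, then by \emph{(ii)}$\Rightarrow$\emph{(i)} both minimize $\RR$ over the same fibre, so \ref{cond:symmbreak} forces $x_1=x_2$. ($\Leftarrow$) Assume $\sigma$ is injective on $\NN_\C(\xi)$ for every $\xi\in\hCsh$, fix $z$, and let $x_1,x_2$ both minimize $\RR$ over $\sigma^{-1}(z)$. By the Key Lemma there is $\xi=\hat\zeta\in\hCsh$ with $x_1\in\NN_\C(\xi)$; since $\scal{\hat\zeta}{\cdot}$ factors through $\sigma$,
\[
\scal{\xi}{x_2}=\scal{\zeta}{\sigma(x_2)}=\scal{\zeta}{z}=\scal{\zeta}{\sigma(x_1)}=\scal{\xi}{x_1}=\RR(x_1)=\RR(x_2),
\]
the last equality because both values equal $\Dsh(z)$; hence $x_2\in\NN_\C(\xi)$ by \eqref{xeq:normal_char}, and injectivity of $\sigma$ on $\NN_\C(\xi)$ gives $x_1=x_2$. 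Together with the automatic existence of a minimizer this is \ref{cond:symmbreak}.

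I expect the only genuinely non-routine step to be \emph{(i)}$\Rightarrow$\emph{(ii)} in the Key Lemma — producing a "dual certificate" $\xi\in\hCsh$ for a fibre-minimizer; it relies on the compactness of $\C$ (so that $\Csh$ and $\hCsh=\C\cap\hat Z^*$ are compact and the relevant maxima are attained) and on the identification $\hCsh=\C\cap\hat Z^*$ from Lemma \ref{lemma:Csh_immersion}. The device that makes the ($\Leftarrow$) direction work is the elementary observation that for $\xi\in\hat Z^*$ the value $\scal{\xi}{x}$ depends on $x$ only through $\sigma(x)$, so that membership in $\NN_\C(\xi)$ is, for a fibre-minimizer, a property of the whole fibre rather than of the chosen representative.
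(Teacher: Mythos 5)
Your proposal is correct and follows essentially the same route as the paper: both directions hinge on the identity $\RR(x)=\scal{\xi}{x}\Leftrightarrow x\in\NN_\C(\xi)$, on the fact that $\scal{\hat\zeta}{\cdot}$ factors through $\sigma$ so that normal-cone membership at $\xi\in\hCsh$ is equivalent to being a fibre-minimizer, and on producing the dual certificate $\xi=\hat\zeta$ from $\zeta\in\partial\Dsh(\sigma(x_1))$ (your maximizer of $\scal{\cdot}{z}$ over $\Csh$ is exactly such a subgradient). The only difference is organizational: you package the two implications of the paper's argument into a single ``Key Lemma'' characterizing fibre-minimizers, which the paper proves inline.
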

\begin{proof}
	Let us take $x_1,x_2\in \NN_{\C}(\xi)$ with	$\sigma(x_1)=\sigma(x_2)$ and $\xi \in \hCsh$. To prove that assumption \ref{cond:symmbreak} implies the condition of the Lemma, we have to show that it implies $x_1=x_2$.	
	Since $\xi\in \hat Z^*$, we have $\RR(x_1)=\scal{\xi}{x_1}=\scal{\xi}{x_2}=\RR(x_2)$. Moreover, since $\xi \in \hCsh$, we have $\scal{\xi}{x_1}\leq \Dsh(\sigma(x_1))$ and so $\RR(x_1)=\Dsh(\sigma(x_1))$. By \ref{cond:symmbreak}, this is true only if $x_1=x_2$.
	
	To show that \ref{cond:symmbreak} is also necessary for the condition of the Lemma, we assume by contradiction that \ref{cond:symmbreak} is falsified by $x_1\neq x_2$. Let us notice that for every $x_1\in X$ with  $\RR(x_1)=\Dsh(\sigma(x_1))$, we can always find $\xi\in \hCsh$ such that $\scal{\xi}{x_1}=\RR(x_1)$. To do so, we take any $\zeta\in \partial \Dsh (\sigma(x_1))$ and set $\xi=\hat \zeta$. By $\eqref{xeq:normal_char}$, we have $x_1\in \NN_\C(\xi)$. Since $\sigma(x_1)=\sigma(x_2)$ and $\xi\in \hat Z^*$, we have $\scal{\xi}{x_2}=\RR(x_1)=\RR(x_2)$ and therefore, by $\eqref{xeq:normal_char}$, also $x_2\in \NN_\C(\xi)$, contradicting in this way the condition of the Lemma.
\end{proof}

\begin{remark}[Stasis domains and direction of motion]\label{rem:SDdirection}
	By \eqref{eq:LegendreFenchel} and noticing that $-D_x\EE(t,x)=\ell(t)-\A x$, we know that the evolution problem \eqref{eq:EvP_indep} is equivalent to
	\begin{equation} \label{eq:pre_sweeping}
	\dot x(t)\in \partial I_\C(-D_x\EE(t,x))=\NN_\C(\ell(t)-\A x)
	\end{equation}
	Equation \eqref{eq:pre_sweeping} shows which directions of motion are compatible with a with a ``tension'' state $\hat w=\ell(t)-\A x\in \hCsh$. Those corresponds to the projection of $\NN_\C(\hat w)$ on the net translation subspace $\hat Y^*$. If $\hat w$ is in the interior of the stasis domain $\hCsh$, it is also in the interior of $\C$, so the normal cone is $\NN_\C(\hat w)=\{0\}$ and the crawler cannot move, as we already now. If $\hat w$ is on the boundary of $\hCsh$, then $\NN_\C(\hat w)$ contains at least a line. The projection of the cone $\NN_\C(\hat w)$ on $\hat Y^*$ provides the directions of possible net translation attainable with a suitable input.
\end{remark}

To conclude, we prove Lemma \ref{lemma:Csh_immersion}.

\begin{proof}[Proof of Lemma \ref{lemma:Csh_immersion}] \label{proof_lemmaB}
	Let us first show that $\hCsh \subseteq \C\cap \hat Z^*$. By construction we have $\hCsh \subseteq \hat Z^*$. Moreover, for every $\zeta \in \Csh$ and $x\in X$, applying the characterization \eqref{eq:R_fenchelchar} to $\Dsh$ and $\Csh$, we have
	$$
	\bigl\langle \hat \zeta ,x\bigr\rangle=\scal{\zeta}{\sigma(x)}\leq \Dsh(\sigma (x))\leq \RR(x)
	$$
	and therefore $\hat \zeta\in \C$.
	
	To show the opposite inclusion, let us take any $\hat \zeta \in \C\cap \hat Z^*$; we remark that since $\hat \zeta \in \hat Z^*$ there exists a corresponding $\zeta \in Z^*$. Then, for every $z\in Z$, we have
		$$
	\scal{\zeta}{z}=\scal{\hat \zeta}{\chi(z,\vm(z))}\leq \RR(\chi(z,\vm(z)))=
	\Dsh(z)
	$$
	and therefore $\zeta \in \hCsh $.
\end{proof}


\section{Time-independent friction: examples}
\label{sec:indep_examples}

We now illustrate how the abstract framework of the previous section can be easily applied to crawling locomotors. We remark that, when not explicitly stated otherwise, the term ``crawling'' here usually refers to locomotion with a unidimensional body, moving on the line.
In the study of crawling locomotion we can recognize two main families of models: discrete models, where the body of is represented by a finite number of contact points, and continuous models, where the friction is distributed along a 
domain (e.g.~a segment). Models of both kinds has been extensively studied, with various choices in the morphology of the crawler and in the frictional interaction. For the discrete case, we mention here the series of works
 \cite{BPZZ11,BPZZ16,SteBeh12,ZZB09}, dealing with chains of blocks subject to a peristaltic wave, followed by a recent focus on adaptive control for such situations \cite{BSZZB17,Behn2011,Behn2013}. 
For the continuous case, we point out the papers \cite{DeSGNT13,Tanaka_2012} for peristaltic waves, and \cite{DeSTat12,GidNosDeS14} for both peristaltic waves and actuation via two shape-parameters.

In this section we consider both discrete and continuous models. We first discuss homogeneous models, meaning that friction and elastic coefficients are the same along all the body. Indeed, this situation is common in both biological and robotic crawlers, since a modular structure is convenient during construction/growth and provides the locomotor with redundancy, useful in case of damage. In Section \ref{sec:hybrid} we show, however, that the same approach can be extended also to heterogeneous models and to more complex body structures, combining both a discrete and a continuous part. In Section \ref{sec:planarcrawler} we briefly discuss the case of planar crawlers ($d=2$).

\subsection{Homogeneous discrete models} \label{sec:disc_indep}

In discrete models  we represent the body of the crawler with a finite number of points $\Omega_N=\{\xi_1,\xi_2,\dots,\xi_N\}\subset\R$, with $N\geq 2$. Such a system is represented in Figure \ref{fig:discr} for $N=3$.  

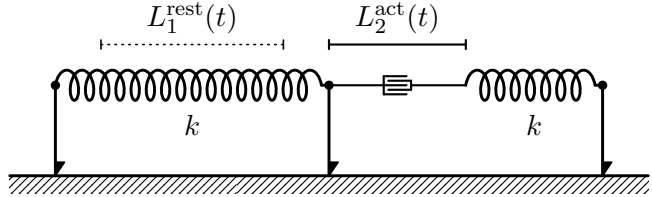
\begin{figure}
	\begin{center}

\begin{tikzpicture}[line cap=round,line join=round,>=triangle 45,x=1.0cm,y=1.0cm, line width=1.1pt,scale=1.2]
\clip(0.,-0.3) rectangle (8.,2.);
\draw[dotted, line width=0.6pt] (1.5,1.45)-- (3.5,1.45);
\draw[line width=0.6pt] (1.5,1.4)-- (1.5,1.5);
\draw[line width=0.6pt] (3.5,1.4)-- (3.5,1.5);

\draw (2.5,1.45) node[anchor=south] {$L_1^\mathrm{rest}(t)$};
\draw (2.5,0.8) node[anchor=north] {$k$};
\draw (1.,1.)-- (1.,0.);
\draw[decoration={aspect=0.5, segment length=2mm, amplitude=2mm,coil},decorate] (1,1.)-- (4.,1.);
\draw (4.,1.)-- (4.,0.);
\draw (0.5,0.)-- (7.5,0.);
\draw [fill=black] (1.,1.) circle (1pt);
\draw [fill=black,line width=0.6pt] (1.,0.) --  (1,0.15) -- (1.1,0.15) --(1,0);
\draw [fill=black,line width=0.6pt] (4.,0.) --  (4,0.15) -- (4.1,0.15) --(4,0);
\draw [fill=black] (4.,1.) circle (1pt);
\fill [pattern = north east lines] (0.5,0) rectangle (7.5,-0.2);

\draw[line width=0.8pt] (4.,1.)-- (4.6,1.);
\draw[line width=0.8pt] (4.9,1.)-- (5.5,1.);
\draw[line width=0.8pt] (4.6,0.9)-- (4.6,1.1);
\draw[line width=0.8pt] (4.9,0.95)-- (4.9,1.05);
\draw[line width=0.8pt] (4.65,0.95)-- (4.9,0.95);
\draw[line width=0.8pt] (4.65,1.05)-- (4.9,1.05);
\draw[line width=0.8pt] (4.6,1)-- (4.85,1);
\draw[line width=0.8pt] (4.6,1.1)-- (4.85,1.1);
\draw[line width=0.8pt] (4.6,0.9)-- (4.85,0.9);
\draw[line width=0.8pt] (4,1.45)-- (5.5,1.45);
\draw[line width=0.8pt] (4,1.4)-- (4,1.5);
\draw[line width=0.8pt] (5.5,1.4)-- (5.5,1.5);
\draw (4.75,1.45) node[anchor=south] {$L_2^\mathrm{act}(t)$};
\draw (6.25,0.8) node[anchor=north] {$k$};
\draw [decoration={aspect=0.5, segment length=2mm, amplitude=2mm,coil},decorate](5.5,1.)-- (7.,1.);
\draw (7.,1.)-- (7.,0.);
\draw [fill=black,line width=0.6pt] (7.,0.) --  (7,0.15) -- (7.1,0.15) --(7,0);
\draw [fill=black] (7.,1.) circle (1pt);
\end{tikzpicture}
	\end{center}
	\caption{A discrete crawler with three contact points. The picture illustrates the two possible interpretation of the energy \eqref{eq:linkenergy} of each link. The left link is a spring subject to an active distortion that changes its rest length $L_1^\mathrm{rest}(t)$ in time. The right link is composed by an actuator, with variable length $L_2^\mathrm{act}(t)$, coupled with a spring.}
	\label{fig:discr}
\end{figure}

The displacement of the crawler can is described by a vector $x=(x_1,x_2,\dots,x_N)$ in $X=\R^{N}$, identifying with $x_i$ the displacement of the point $\xi_i$. Since we work in a finite dimensional space, to simplify the notation we will adopt the canonical identification of $(\R^N)^*$ with $\R^N$, so that the dual coupling $\scal{\cdot}{\cdot}$ corresponds to the scalar product.

To describe the changes in shape, we look at the differences between the displacements $x_i$,$x_{i+1}$ of each couple of adjacent points $\xi_i$, $\xi_{i+1}$. Thus we set $Z=\R^{N-1}$ and
\begin{equation}
\sigma (x)=(x_2-x_1, x_3-x_2,\dots, x_N-x_{N-1}):=(z_1,z_2,\dots, z_{N-1})
\end{equation}
We point out that this base of $Z$ is not orthogonal with respect to the scalar product induced by $X$. This means, for instance, that $\Csh\subset Z^*$ may appear deformed with respect to its immersion $\hCsh$ in $X^*$, as happens in Figure \ref{fig:3legsdomains}.

To measure the net displacement of the crawler, several choices are equivalently valid. 
For instance, we can represent the net translation $y$ of the crawler by that of any of its points, such as its head ($\pi(x)=x_N$) or tail ($\pi(x)=x_1$). In \cite{GidDeS17}, the model with $N=3$ has been analysed taking $\pi(x)=x_2$.  
As these examples show, the immersion $\hat Y$ in $X$ of net translation space $Y$ may not be orthogonal to the immersion $\hat Z$ of the shape space $Z$, with respect to the scalar product of $X$. 
An orthogonal decomposition remains anyway in general a good choice. To do so, let us choose as reference point the barycentre of the crawler, namely 
\begin{equation}
\pi(x)=\frac{1}{N}\scal{\mathds{1}_N}{x}=\frac{1}{N}\sum_{i=1}^N x_i:=y
\end{equation}
where $\mathds{1}_N$ denotes the vector in $\R^N$ with all components equal to $1$.

We assume that the body of the crawler has $N-1$ elastic links joining each couple of consecutive blocks. 
We assume that the energy of each link, joining $\xi_i$ with $\xi_{i+1}$, has the form
\begin{equation}\label{eq:linkenergy}
\EE_i(t,x)=\frac{k}{2}(x_{i+1}-x_i-L_i(t))^2
\end{equation}
Hence, the internal energy $\EE(t,x)$ of the crawler is the sum of the $N-1$ terms $\EE_i(t,x)$ associated with each link.
The terms $L_i\in \CC^1([0,T],R)$ represent our control -- mediated by elasticity -- on the shape of the crawler (more generally, $L_i(t)$ can be taken continuous and piecewise continuously differentiable, cf.~Remark~\ref{rem:lipschitz}). The form \eqref{eq:linkenergy} of the energy of each link  has two possible interpretations, represented in Figure \ref{fig:discr}.

The first one is to model the elastic link as a spring with elastic constant $k$, subject to an active distortion that changes its rest length in time, with $L_i^\mathrm{rest}(t)=\xi_{i+1}-\xi_i+L_i(t)$. The second interpretation is to assume that the segment is composted by two adjacent elements: a spring with elastic modulus $k$ and rest length $L_i^\mathrm{rest}$, and an actuator which changes its length $L_i^\mathrm{act}(t)$ in time. In this case we have $L_i(t)=-(\xi_{i+1}-\xi_i)+L^\mathrm{rest} + L^\mathrm{act}(t)$. 
 Here, for simplicity, we assume that all the $N-1$ springs share the same constant. 
Yet, the model falls within our abstract framework also without such an assumption, as discussed  in Section \ref{sec:hybrid}. 
 
We have
$$
\EE(t,z) = \sum_{i=1}^{N-1}\frac{k}{2}(x_{i+1}-x_i-L_i(t))^2
$$
that can be expressed in the form \eqref{eq:energy} by taking
$$
\A=\frac{k}{2}\,\begin{pmatrix} 
1  & -1 & 0 & \cdots & 0 & 0\\
-1 &  2 & -1& \cdots & 0 & 0\\
0  & -1 & 2 & \cdots & 0 & 0\\
\vdots & \vdots & \vdots & \ddots & \vdots & \vdots \\
0 & 0 & 0 & \cdots & 2 & -1\\
0 & 0 & 0 & \cdots & -1 & 1\\
\end{pmatrix}
\qquad
\ell(t)=k\,\begin{pmatrix}
-L_1(t)\\
-L_2(t)+L_1(t)\\
-L_3(t)+L_2(t)\\
\vdots\\
-L_{N-1}(t)+L_{N-2}(t)\\
L_{N-1}(t)
\end{pmatrix}
$$
Since the internal energy $\EE$ depends only on shape, it can be rewritten in the  coordinates $(z_1,\dots,z_{n-1})$, as
\begin{equation}
\Esh(t,z) = \sum_{i=1}^{N-1} \frac{k}{2} \left( z_i-L_i(t)\right)^2  
\end{equation}
where $\Ash$ and $\lsh$ have the simpler form
$$
\Ash=\frac{k}{2} \I_{N-1}
\qquad
\lsh(t)=k\,\begin{pmatrix}
L_1(t)\\
L_2(t)\\
\vdots\\
L_{N-1}(t)
\end{pmatrix}
$$

We assume that every element $\xi_i$ is subject to the same type of directional dry friction, so that the force exerted on it by the surface is
\begin{equation}
	F_i(t)=F(\dot{x}_i(t)) \quad \text{ where } F(v) \in
	\begin{cases}
	\displaystyle \{\mu_-\}   &  \text{if $v<0$} \\
	\displaystyle [-\mu_+,\mu_-]   &   \text{if $v=0$} \\
	\displaystyle \{-\mu_+\}   &  \text{if $v>0$}
	\end{cases}
\end{equation}
where $\mu_\pm$ are the (non-negative) friction coefficients.
Thus we can define the dissipation potential associated to each element $\xi_i$ as
\begin{equation} \label{xeq:dissipoint}
	\RR_i(u_i)=\begin{cases}
	-\mu_-u_i  &  \text{if $u_i\leq 0$} \\
	\mu_+u_i  &  \text{if $u_i\geq 0$}
	\end{cases}
\end{equation}
and the total dissipation potential as
\begin{equation}
	\RR(\dot x(t))=\sum_{i=1}^N \RR_i (\dot x_i(t))
\end{equation}
We observe that $\RR$ is continuous, convex and positively homogeneous of degree one, since it is the sum of functions with these same properties. We have
$$
\C=[-\mu_-,\mu_+]^N\subseteq X^*
$$
thus establishing the set of admissible initial points.

We can now investigate the nature of condition \ref{cond:symmbreak} for this family of models.

\begin{lemma} \label{lemma:SBdiscr}
	For a homogeneous discrete crawler, each of the following conditions is equivalent to \ref{cond:symmbreak}:
	\begin{enumerate}[label=\textit{\roman*)}]
		\item \label{it:SBdiscr1} For every integer $m\in \{0,1,\dots,N\}$, we have
		$$
		m\mu_- - (N-m)\mu_+ \neq 0
		$$
		\item \label{it:SBdiscr2} The hyperplane $\hat Z^*=\{x\in \R^N : \scal{x}{\mathds{1}_N}=0\}$ does not intersect any of the vertices of the set $\C$; in other words, the vertices of $\C$ are not in $\hCsh$.
	\end{enumerate}
\end{lemma}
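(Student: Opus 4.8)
The plan is to establish the chain $\ref{cond:symmbreak}\Leftrightarrow\ref{it:SBdiscr1}\Leftrightarrow\ref{it:SBdiscr2}$, treating $\ref{it:SBdiscr1}\Leftrightarrow\ref{it:SBdiscr2}$ as a short linear-algebra computation and putting the real work into the characterization of \ref{cond:symmbreak}. As a preliminary reduction, note that taking $m=0$ and $m=N$ in \ref{it:SBdiscr1} forces $\mu_+>0$ and $\mu_->0$; conversely, if $\min(\mu_-,\mu_+)=0$ then $0$ is a vertex of $\C=[-\mu_-,\mu_+]^N$ lying on $\hat Z^*$, so $0\in\hCsh$ and \ref{it:SBdiscr2} fails, while $\RR$ restricted to the fibre $\{t\mathds{1}_N:t\in\R\}$ over $\tilde z=0$ is monotone and constant on a half-line, so \ref{cond:symmbreak} fails; hence when $\min(\mu_-,\mu_+)=0$ all three statements are false and there is nothing to prove. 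I may therefore assume $\mu_\pm>0$ throughout.

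For the heart of the argument I would analyse $\RR$ fibrewise. Since $\ker\sigma=\Span\{\mathds{1}_N\}$, the fibre $\sigma^{-1}(\tilde z)$ is a line $\{x^0+t\mathds{1}_N:t\in\R\}$, and on it $g(t):=\RR(x^0+t\mathds{1}_N)=\sum_{i=1}^N\RR_i(x_i^0+t)$ is a convex, piecewise-affine, coercive (because $\mu_\pm>0$) function of $t$; its slope on an interval where exactly $m$ of the numbers $x_i^0+t$ are positive equals $s_m:=m\mu_+-(N-m)\mu_-$, which is strictly increasing in $m$. Condition \ref{cond:symmbreak} asserts precisely that $\argmin g$ is a singleton for every $\tilde z$. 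A coercive convex piecewise-affine function of one variable has a non-unique minimum exactly when it is constant on some non-degenerate interval, i.e.\ when some slope $s_m$ realised on such an interval vanishes. This yields both implications: if $s_m=0$ for some $m$ (necessarily $1\le m\le N-1$, since $s_0<0<s_N$), I would take $x^0$ with $x_1^0>\dots>x_N^0$, so that the breakpoints $-x_i^0$ are distinct, the interval $(-x_m^0,-x_{m+1}^0)$ is non-degenerate with exactly $m$ positive terms, and $g$ is constant there and equal to its minimum — hence \ref{cond:symmbreak} fails for $\tilde z=\sigma(x^0)$; conversely, if \ref{cond:symmbreak} fails for some $\tilde z$ then $\argmin g$ contains an open subinterval free of breakpoints, on which the slope is some $s_m$ equal to $0$. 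Finally, the equations $s_m=0$, $m\in\{0,\dots,N\}$, are, after the relabeling $m\leftrightarrow N-m$, exactly the equations $m\mu_--(N-m)\mu_+=0$, $m\in\{0,\dots,N\}$; so the existence of a vanishing $s_m$ is the negation of \ref{it:SBdiscr1}, giving $\ref{cond:symmbreak}\Leftrightarrow\ref{it:SBdiscr1}$.

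For $\ref{it:SBdiscr1}\Leftrightarrow\ref{it:SBdiscr2}$ I would argue directly on the box $\C=[-\mu_-,\mu_+]^N$: a vertex is specified by the set $S$ of indices at which the coordinate equals $-\mu_-$, and if $m=|S|$ then its pairing with $\mathds{1}_N$ is $-m\mu_-+(N-m)\mu_+=-(m\mu_--(N-m)\mu_+)$; thus this vertex lies on $\hat Z^*=\{x:\scal{x}{\mathds{1}_N}=0\}$ iff $m\mu_--(N-m)\mu_+=0$. Since every value $m\in\{0,\dots,N\}$ is attained by some vertex, "$\C$ has a vertex on $\hat Z^*$" is equivalent to the failure of \ref{it:SBdiscr1}; and because $\hCsh=\C\cap\hat Z^*$ by Lemma \ref{lemma:Csh_immersion} while vertices of $\C$ automatically lie in $\C$, having a vertex on $\hat Z^*$ is the same as having a vertex in $\hCsh$, which is the negation of \ref{it:SBdiscr2}.

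I expect the main obstacle to be the bookkeeping in the fibrewise analysis: correctly matching the slope $s_m$ on a between-breakpoints interval to the index $m$ actually realised (which depends on coincidences among the $x_i^0$), handling the reindexing $m\leftrightarrow N-m$ that converts $s_m=0$ into the form in \ref{it:SBdiscr1}, and making the genericity choice (a fibre whose breakpoints are distinct and for which the critical interval is non-degenerate) fully precise. A leaner alternative for $\ref{cond:symmbreak}\Leftrightarrow\ref{it:SBdiscr2}$ would be to invoke Lemma \ref{lemma:SB_normalcone}: for the box $\C$ one has $\NN_\C(\xi)=\prod_i\NN_{[-\mu_-,\mu_+]}(\xi_i)$, and two elements of $\NN_\C(\xi)$ can be distinct yet differ by a multiple of $\mathds{1}_N$ (equivalently, share the same image under $\sigma$) if and only if no coordinate of $\xi$ is interior, i.e.\ $\xi$ is a vertex; applied to $\xi\in\hCsh$ this gives $\ref{cond:symmbreak}\Leftrightarrow\ref{it:SBdiscr2}$ directly, at the cost of the same preliminary reduction to $\mu_\pm>0$ that the coercivity hypothesis of that lemma requires.
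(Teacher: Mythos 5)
Your proof is correct and follows essentially the same route as the paper: restating \ref{cond:symmbreak} as uniqueness of the minimizer of the convex, piecewise-affine restriction of $\RR$ to each fibre $\sigma^{-1}(\tilde z)$, computing the slope along $\mathds{1}_N$ by counting positive/negative components, and reading off \ref{it:SBdiscr2} from the vertices of the box $\C$. The closing aside -- deducing $\ref{cond:symmbreak}\Leftrightarrow\ref{it:SBdiscr2}$ from Lemma \ref{lemma:SB_normalcone} by observing that $\NN_\C(\xi)$ contains two distinct points with equal $\sigma$-image exactly when $\xi$ is a vertex -- is a valid shortcut the paper does not take, but it is not needed given your main argument.
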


\begin{proof}
	Condition \ref{cond:symmbreak} can be restated saying that, for any $\tilde z \in Z$, the restriction of the function $\RR$  to the line $\chi(\R,\tilde z)\subset X$ has a unique minimum point. Since the function $\RR$ is convex, so are its restrictions to affine spaces and therefore the set of minimizers on each of these restrictions is always a interval, possibly empty.   If a restriction $\RR(\chi(w,\cdot))$, for some $w\in Z$ has no minimum, then it is no coercive, and so must be $\RR$. But $\RR$ is not coercive only if $\mu_-=0$ or $\mu_+=0$, contradicting \ref{it:SBdiscr1}.

	If the set of minimizers is non-empty and not a singleton, then there exists a minimizer $\tilde x\in \chi(\R,\tilde z)$ and $\Lambda>0$ such that 
	$$
	\RR(\tilde x)=\RR (\tilde x+\lambda \mathds{1}_{\!N}) \qquad \text{for every $0\leq \lambda\leq \Lambda$}
	$$
	where we recall $\mathds{1}_{\!N}=(1,1,\dots,1)\in \R^N$.
	Let us denote with $m(x)$ the number of strictly negative components $x_i$ of $x$. Without loss of generality, we can take $\Lambda$ sufficiently small that $m(\tilde{x})=m(\tilde{x}+\Lambda \mathds{1})$.
	Thus, for every $0\leq \lambda\leq \Lambda$, we have
	$$
	\RR (\tilde x+\lambda \mathds{1}_{\!N}) -\RR(\tilde x)= \lambda [m(\tilde x)\mu_- -(N-m(\tilde{x}))\mu_+]=0 
	$$
	and therefore $m(\tilde x)\mu_-=(N-m(\tilde{x}))\mu_+$. 
This shows that, if \ref{it:SBdiscr1} is satisfied, then the set of minimizers must be a singleton, hence \ref{cond:symmbreak} holds.
	
 To check the converse implication, let us assume by contradiction that there  exists $m\in \{0,1,\dots,N\}$ such that $m\mu_- = (N-m)\mu_+$. We observe that, for $\lambda\in [0,1]$, the vectors
	$$
	x^\lambda=(\overbrace{-\lambda,-\lambda,\dots,-\lambda}^{\text{$m$-times}},
	\overbrace{1-\lambda,1-\lambda,\dots,1-\lambda}^{\text{$(N-m)$-times}})\in X
	$$
	have all the same projection on $Z$, namely the vector $\tilde z$ with $\tilde{z}_{m-1}=1$ and the other components equal to zero. Furthermore, since we assumed $m\mu_- = (N-m)\mu_+$ and $\RR$ is convex, we have that all the $x_\lambda$ are minimizers for $\RR$ restricted to $\chi(\R,\tilde z)$, in contradiction with \ref{cond:symmbreak}. We have thus shown that \ref{it:SBdiscr1} if also necessary for \ref{cond:symmbreak}, and therefore the two conditions are equivalent.
	
	\medbreak
	
To show the equivalence between \ref{it:SBdiscr1} and \ref{it:SBdiscr2}, let us consider a generic vertex $x\in X$ of the cube $\C$: it has $m$ coordinates with value $-\mu_-$ and $N-m$ coordinates with value $\mu_+$, where $m\in \{0,1,\dots,N\}$. Such a vertex lies on the hyperplane $\hat Z^*$ if and only if 
		$$
	\scal{x}{\mathds{1}_N}= -m\mu_- + (N-m)\mu_+ = 0
	$$
	since $\mathds{1}_N$ is orthogonal to $\hat Z^*$.
For any $m\in \{0,1,\dots,N\}$ we can find a corresponding vertex, so it follows that \ref{it:SBdiscr1} and \ref{it:SBdiscr2} are equivalent.
\end{proof} 

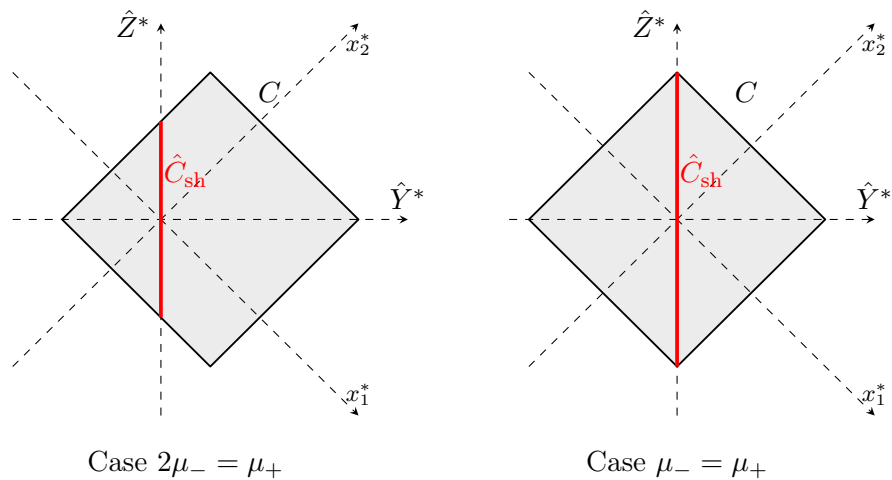
\begin{figure}
	\begin{center}
		\begin{tabular}{cc}
			\begin{tikzpicture}[line join=round,>=stealth,x=1.3cm,y=1.3cm]
			\clip(-2.2,-2.2) rectangle (2.7,2.2);
			\draw[->,dashed, color=black] (-1.5,-1.5) -- (2,2) node[below]{\footnotesize $x^*_2$};
			\draw[->,dashed, color=black] (-1.5,1.5) -- (2,-2) node[above]{\footnotesize $x^*_1$};
			\draw[->,dashed, color=black] (-1.5,0.) -- (2.5,0.) node[above]{ $\hat Y^*$};
			\draw[->,dashed,color=black] (0.,-2) -- (0.,2) node[left]{ $\hat Z^*$};
			\fill[line width=0.7pt,draw=black,fill=uuuuuu,fill opacity=0.1] (0.5,1.5) -- (-1.,0.) -- (0.5,-1.5) -- (2.,0.) -- cycle;
			\draw [line width=1.4pt,color=red] (0.,1.)-- (0.,-1.);
			\draw (1.1,1.3) node{$\C$};
			\draw (0.25,0.5) node[color=red]{$\hCsh$};
			\end{tikzpicture}
			&
				\begin{tikzpicture}[line join=round,>=stealth,x=1.3cm,y=1.3cm]
			\clip(-2.2,-2.2) rectangle (2.2,2.2);
			\draw[->,dashed, color=black] (-1.7,0.) -- (2,0.) node[above]{ $\hat Y^*$};
			\draw[->,dashed,color=black] (0.,-2) -- (0.,2) node[left]{ $\hat Z^*$};
				\draw[->,dashed, color=black] (-1.5,-1.5) -- (2,2) node[below]{\footnotesize $x^*_2$};
			\draw[->,dashed, color=black] (-1.5,1.5) -- (2,-2) node[above]{\footnotesize $x^*_1$};
			\fill[line width=0.7pt,draw=black,fill=uuuuuu,fill opacity=0.1] (0.,1.5) -- (-1.5,0.) -- (0.,-1.5) -- (1.5,0.) -- cycle;
			\draw [line width=1.4pt,color=red] (0.,1.5)-- (0.,-1.5);
			\draw (0.7,1.3) node{$\C$};
			\draw (0.25,0.5) node[color=red]{$\hCsh$};
			\end{tikzpicture}\\
			Case $2\mu_-=\mu_+$ &
			Case $\mu_-=\mu_+$
		\end{tabular}
	\end{center}
	\caption{Representation of the sets $\C$ and $\hCsh$ in the space $X^*$ for Example \ref{ex:2legs}. The case $2\mu_-=\mu_+$, represents the general qualitative scenario, while the case $\mu_-=\mu_+$ is critical, allowing multiple solutions.}
	\label{fig:2legsdomains}
\end{figure}

\begin{example}\label{ex:2legs}
	The case of a homogeneous discrete crawler with two contact points, $\Omega=\{\xi_1,\xi_2\}$ has been considered in \cite{DeSGidNos15};  such model is similar to the prototype presented in \cite{NosDeS14}, and should be compared with two-anchor crawlers, for which shape change and friction manipulation occur at separate time phases (cf.~Section \ref{sec:dep_examples}).
	
	For this model we have
	$$
	\C=[-\mu_-,\mu_+]^2\subset X^* \qquad \Csh=[-\mum,\mum]\in Z^*
	$$
	This situation is illustrated in Figure \ref{fig:2legsdomains} (notice that the change of coordinates $\sigma$ induces a rescaling of a factor $\sqrt{2}$ between $\Csh$ and $\hCsh$). By Remark \ref{rem:SDdirection} we see at a glance that in the case $2\mu_-=\mu_+$ the crawler is able only to move backwards.
	
	Condition \ref{cond:symmbreak} reduces to assume that the friction coefficients are non-zero and $\mu_+\neq \mu_-$. We point out that the case $\mu_+=\mu_-$ is not meaningful as a crawler, since, as discussed in the introduction, it lacks the necessary asymmetry to achieve a locomotion.
	
	The motility problem in the case when $L_1(t)$ is a triangle wave, corresponding to a periodic cycle of contraction and elongation of the crawler, has been explicitly solved in \cite{DeSGidNos15}, and is qualitatively shown in Figure \ref{fig:2motion}.  The crawler is able to advance only in the direction of lower friction $\mum$, and will do it only if the oscillations of $L_1$ are sufficiently large. More precisely, the amplitude $\Delta L$ must be greater than $2\mum/k$ to periodically produce displacement,  and in that case on each period the crawler will move of $\Delta L-2\mum/k$.

	\medbreak
	
	This basic model provides an opportunity to illustrate concretely how the failure of condition \ref{cond:symmbreak} can produce multiplicity of solutions. Let us therefore assume that \ref{cond:symmbreak} is false, namely $\mu=\mu_+=\mu_-$.
	We consider the following situation. At the initial time $t=0$, the body of the crawler is in the state of maximum admissible compression, namely
	$$
	-k[x_2-x_1-L_1(t)]=-D_z\Esh(0,z_0)=\mu
	$$
	The load evolves as $L_1(t)=L_1(0)+t$. If the crawler would remain steady, this would further increase the tension of the crawler. Yet, since we are at the boundary of the stasis domain and we have reached the maximum compression, the crawler would start elongating in such a way that $-D_z\Esh(t,z(t))=\mu$. This implies that the distance $z(t)$ between the two contact points grows as $L_1$, namely $z(t)=z(0)+t$. By the shape reduction approach, we know that the solution of the systems are given by the functions $x\colon [0,T]\to \R^2$ such that $x_2(t)-x_1(t)=z(t)$ and producing minimal dissipation, as written in \eqref{xeq:dissipation_estimate}. These conditions are satisfied by all the the functions of the form
	$$
	x_1(t)=x_1(0)-\lambda t \qquad x_2(t)=x_2(0)+(1-\lambda)t
	$$
	with $\lambda\in[0,1]$. In other words, the interval $[-1/2,1/2]$ is the set of minimizers for $\RR(\chi(1,\cdot))$. For $\lambda =0$ this means that $\xi_1$ moves backwards while $\xi_2$ is steady; in the opposite case $\lambda =1$, the point $\xi_1$ is steady while $\xi_2$ moves forwards. In the intermediate cases $\lambda\in(0,1)$ both contact points move, $\xi_1$ backwards and $\xi_2$ forwards.
\end{example}

\begin{example}\label{example:3legs}
		We now consider the case of a homogeneous discrete crawler with three contact points, $\Omega=\{\xi_1,\xi_2,\xi_3\}$. This model has been studied in \cite{GidDeS16}, to which we refer for a detailed analysis of the motility. 
		
		Here, condition \ref{cond:symmbreak} requires that the friction coefficients are non-zero, and satisfy $\mu_+\neq 2\mu_-$ and $\mu_-\neq 2\mu_+$. Qualitatively, these assumptions create the divide between two different scenarios.
		
\begin{itemize}
	\item If $\mu_+> 2\mu_-$	or	$\mu_-> 2\mu_+$, then the crawler is able to move  only in the direction of lower friction.
	\item If $\frac{1}{2}\mu_-<\mu_+< 2\mu_-$, then it is possible to find suitable periodic inputs $L$ that allow the crawler to move in each direction.
\end{itemize}
		
		Such qualitative difference in the motility is reflected by the shape of the stasis domains $\Csh$, illustrated in Figure \ref{fig:3legsdomains}.		
\end{example}

\begin{figure}[tb!]
\begin{center}
	\begin{tabular}{cc}
	\includegraphics[width=6cm]{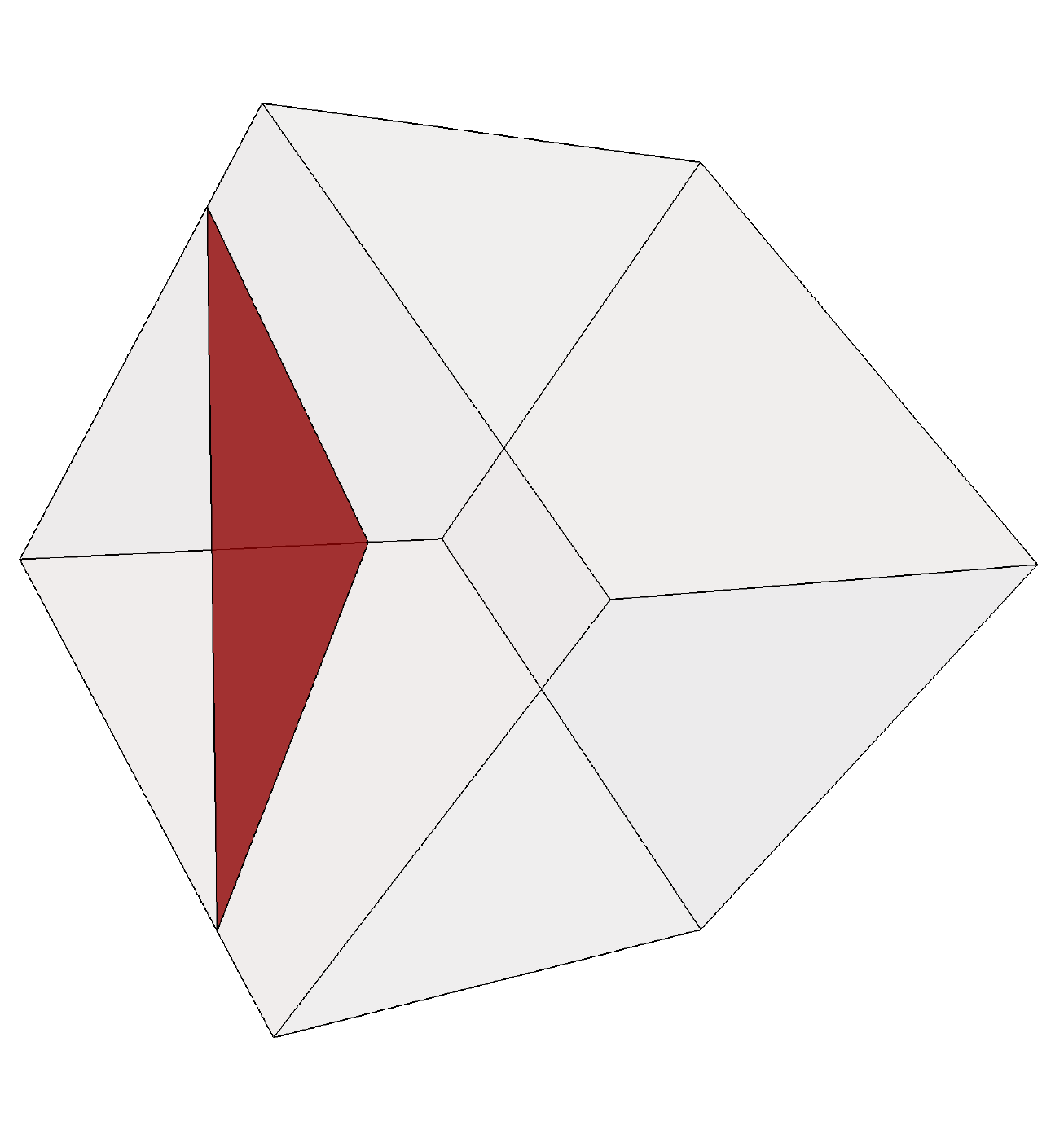}	& 	\includegraphics[width=6cm]{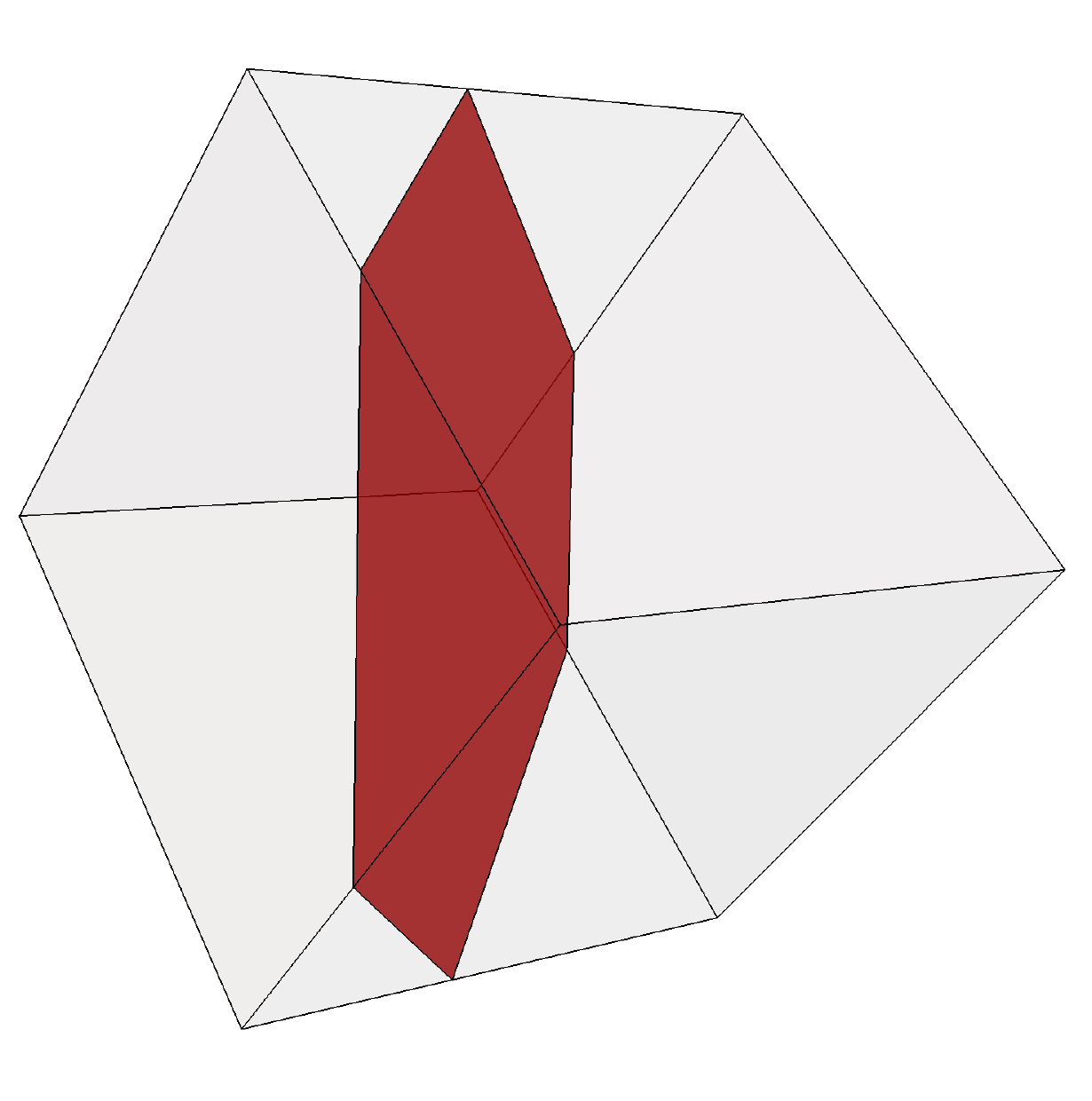} \\
		\begin{tikzpicture}[line cap=round,line join=round,x=1.0cm,y=1.0cm,>=stealth,scale=1]
		\clip(-3,-2.5) rectangle (3.5,2.5);\footnotesize
		\draw[->,dashed, color=black,thin] (-1.5,0.) -- (2.5,0.) node[above]{ $z^*_1$};
		\draw[->,dashed,color=black,thin] (0.,-2.3) -- (0.,2) node[left]{ $ z^*_2$};
		\fill[line width=1 pt,color=red,fill=red,fill opacity=0.2] (-1.,1.) -- (2.,1.) -- (-1.,-2.) -- cycle;
		\draw [] (0.6,1) node[anchor=south] {$\vm\geq 0$};
		\draw [](-1,-0.5) node[anchor= east] {$\vm\geq 0$};
		\draw [](0.5,-0.5)node[anchor=north west] {$\vm\geq 0$};
		\draw [](0.4,0.4)node {\normalsize$\Csh$};
		\draw [line width=1.2pt] (-1.,1.)-- (2.,1.);
		\draw [line width=1.2pt] (2.,1.)-- (-1.,-2.);
		\draw [line width=1.2pt] (-1.,-2.)-- (-1.,1.);
		\end{tikzpicture} 
		&
		\begin{tikzpicture}[line cap=round,line join=round,x=1.cm,y=1.0cm,>=stealth,scale=1]
		\clip(-3.,-2.5) rectangle (3.,2.5);\footnotesize
			\draw[->,dashed, color=black,thin] (-1.5,0.) -- (2.5,0.) node[above]{ $z^*_1$};
		\draw[->,dashed,color=black,thin] (0.,-2.3) -- (0.,2) node[left]{ $ z^*_2$};
		\fill[line width=1.2pt,color=red,fill=red,fill opacity=0.2] (-1.,-1.333) -- (-0.3333,-1.333) -- (1.333,0.3333) -- (1.333,1.) -- (-0.3333,1.) -- (-1.,0.3333) -- cycle;
		\draw [line width=1.2pt] (-1.,-1.333) -- (-0.3333,-1.333) -- (1.333,0.3333) -- (1.333,1.) -- (-0.3333,1.) -- (-1.,0.3333)-- cycle;		
		\draw  (0.6,1) node[anchor=south] {$\vm\geq 0$};
		\draw  (-1,-0.5) node[anchor=east] {$\vm\geq 0$};
		\draw  (0.5,-0.5)  node[anchor=north west] {$\vm\geq 0$};
		\draw  (-0.6666,-1.333) node[anchor=north] {$\vm\leq 0$};
		\draw  (1.333,0.6666)  node[anchor=west] {$\vm\leq 0$};
		\draw (-0.6666,0.6666)  node[anchor=south east] {$\vm\leq 0$};
		\draw [](0.4,0.4)node {\normalsize$\Csh$};
		\end{tikzpicture}\\
		$\mu_->2\mu_+$
		&
		$\mu_+<\mu_-<2\mu_+$
	\end{tabular}
\end{center}
\caption{Representation of the sets $\C\subset X^*$, with in red the section $\hCsh$ (above), and $\Csh\subset Z^*$ (below) for Example \ref{example:3legs}, in the two main qualitative scenarios (cf.\cite{GidDeS16}). The compatible direction of motion, indicated for the edges of the stasis domains, can be obtained at a glance from the pictures above, applying Remark \ref{rem:SDdirection}.}
\label{fig:3legsdomains}
\end{figure}

\subsection{Interpretation as sweeping process} 
\label{sec:sweeping}

The case of homogeneous discrete models of crawler encourages a deeper exploration of the nature of condition \ref{cond:symmbreak}, by formulating the problem in the form of \emph{sweeping process}.

We recall that, since we are considering models with $X=\R^N$, we  simplify our notation by canonically identifying $X^*$ with $X$, and consequently the dual pairing $\scal{\cdot}{\cdot}$ with the scalar product in $X$. Moreover, we denote with $\hat \sigma, \hat \pi$ the immersions in $X$ of the projections $\sigma,\pi$, namely
$$
\hat \sigma (x)=\chi(\sigma(x),0)\in X \qquad 
\hat \pi (x)=\chi(0,\pi(x))\in X
$$

We recall that $-D_x\EE(t,x)=\ell(t)-\A x$, and that the evolution problem \eqref{eq:EvP_indep} is equivalent to \eqref{eq:pre_sweeping}:
\begin{equation*} 
\dot x(t)\in \partial I_\C(-D_x\EE(t,x))=\NN_\C(\ell(t)-\A x)
\end{equation*}
We observe that
\begin{align*}
\NN_\C\bigl(\ell(t)-\A x\bigr)
=\NN_{\tilde C(t)} \bigl(-\frac{k}{2}\hat \sigma(x)\bigr)
\end{align*}
where we set
\begin{equation*}
	C'(t)=\C-\ell(t)
\end{equation*}
Introducing the new variable $u=-\frac{k}{2}\,x$, our evolution problem becomes 
\begin{equation} \label{eq:sweep_projected}
	-\dot u(t)\in \NN_{C'(t)} (\hat \sigma(u))
\end{equation}
Here it is evident how the current net translation $\pi(x(t))$ of the crawler does not influence its motility.

 We continue our analysis by noticing that $\hat \sigma(u)=u-\hat \pi(u)$, so that we get
\begin{equation}\label{eq:sweep}
-\dot u(t)\in \NN_{\tilde C(t,u)} (u) 
\end{equation}
where the set $\tilde C(t)$ is defined as
\begin{equation}\label{eq:sweep_set}
\tilde C(t,u)=\C-\ell(t)+\hat{\pi}(u)
\end{equation}
With observe with \eqref{eq:sweep} that our system can be expressed as a sweeping process, but associated to a state-dependent set $\tilde C(t,u)$. 
State-dependent sweeping processes has been discussed in several papers \cite{KunMM98,KunMM2000}. Usually conditions of the form
$$
\dist \bigl(\tilde C(t,u),\tilde C(s,v)\bigr)\leq L_1\abs{t-s}+L_2 \norm{u-v}
$$
with $L_1>0$ and $0<L_2<1$ are required for existence/uniqueness. In our case the condition above would satisfied for $L_2=1$ but not for a smaller one, so the situation is different.

Let us now focus on the structure available in our setup.
 First of all, the set $\tilde C(t,u)$ is always a translation of a constant set $\C$; this will no longer be true in Section \ref{sec:dep_theory} where we introduce time-dependence in the dissipation potential. The two vectors $\ell(t)$ and $\hat \pi(u)$ that characterize the translations are orthogonal. The vector $\ell(t)$, that is our input or control on the system, is $(N-1)$-dimensional and translates the set $\C$ in the shape subspace $\hat Z^*$. The term $\hat \pi(u)$ is $1$-dimensional and moves the set in the net translation subspace $\hat Y^*$; this is done by keeping the state of the system always in the section of the set corresponding to $\hCsh$. Namely we have
 \begin{equation} \label{eq:sweep_section}
 	 u(t)\in \hCsh -\ell(t)+\hat \pi(u(t)) \quad \text{for every $t\in [0,T]$}
 \end{equation}
We can interpret this fact in the following way. The state dependence $\pi(u(t))$ could be theoretically replaced by a time dependent term $p(t)$. 
Oppositely to $\ell(t)$, that is a  free input, the term $p(t)$ is controlled by the system itself, with the aim to keep \eqref{eq:sweep_section} satisfied.

Clearly we need some condition to assure us that such a choice of a suitable $p(t)$ is unique: this is done by condition \ref{cond:symmbreak}. In this sense, its characterization given in Lemma \ref{lemma:SB_normalcone} is really useful. For instance, if for $\xi\in \hCsh$ there where two distinct vectors $w_1,w_2\in \NN_\C(\xi)$ with $\sigma(w_1)=\sigma(w_2)$, the problem with initial point $u(0)=\xi$ and input $\ell(t)=\sigma(w_1)t$ would have as a solution $u(t)=\xi-w_i t$, for both $i=1,2$.

\medbreak
 Our sweeping process interpretation of crawling locomotion, presented above, is illustrated in Figure \ref{fig:sweeping} for the discrete crawler with two contact points, described in Example~\ref{ex:2legs}.

\begin{figure}[p]
	\begin{center}
		\begin{tikzpicture}[line cap=round,line join=round,>=triangle 45,x=1.0cm,y=1.0cm, scale=0.15]
		\clip(-5.,-8.) rectangle (78.,42.);\footnotesize
		\draw[thick, orange] (0.,24.)-- (16.,34.)-- (32.,24.)-- (48.,34.)-- (64.,24.) node[anchor=west] {$L(t)$};
		\draw[thick, green] (0.,13.5)-- (2.,13.5)-- (16.,10.)-- (20.,10.)-- (32.,7.)-- (36.,7.)-- (48.,4.)-- (52.,4.)-- (64.,1.) node[anchor=west] {$y(t)+\frac{\xi_1+\xi_2}{2}$};		
		\draw[thick, blue] (64.,-1.) node[anchor=north west] {$x_1(t)+\xi_1$}-- (48.,-1.)-- (36.,5.)-- (16.,5.)-- (2.,11.)-- (0.,11.);
		\draw [thick, blue](64.,3.) node[anchor=south west] {$x_2(t)+\xi_2$}-- (52.,9.)-- (32.,9.)-- (20.,16.)-- (0.,16.);
		\draw [dotted] (43.,-5.)-- (43.,38.) node[anchor=south]{\cirlet{i}};
		\draw [dotted] (36.,-5.)-- (36.,38.) node[anchor=south]{\cirlet{h}};
		\draw [dotted] (32.,-5.)-- (32.,38.) node[anchor=south]{\cirlet{g}};
		\draw [dotted] (26.,-5.)-- (26.,38.) node[anchor=south]{\cirlet{f}};
		\draw [dotted] (20.,-5.)-- (20.,38.) node[anchor=south]{\cirlet{e}};
		\draw [dotted] (16.,-5.)-- (16.,38.) node[anchor=south]{\cirlet{d}};
		\draw [dotted] (9.,-5.)-- (9.,38.) node[anchor=south]{\cirlet{c}};
		\draw [dotted] (2.,-5.)-- (2.,38.) node[anchor=south]{\cirlet{b}};
		\draw [dotted] (0.,-5.)-- (0.,38.) node[anchor=south]{\cirlet{a}};
		\draw[->] (-2.,-5.)-- (65.,-5.) node[anchor=north]{$t$};
		\draw[->] (-2.,-5.)-- (-2.,20.);
		\draw[->] (-2.,20.)-- (-2.,40.);
		\draw[->] (-2.,20.)-- (65.,20.)node[anchor=north]{$t$};
		\end{tikzpicture}
	\end{center}
	\caption{Motility of the crawler with two contact point of Example \ref{ex:2legs}, with $\mu_-<\mu_+$. The marked times refer to Figure \ref{fig:sweeping}, illustrating the same situation in the sweeping process interpretation.}
	\label{fig:2motion}
\end{figure}
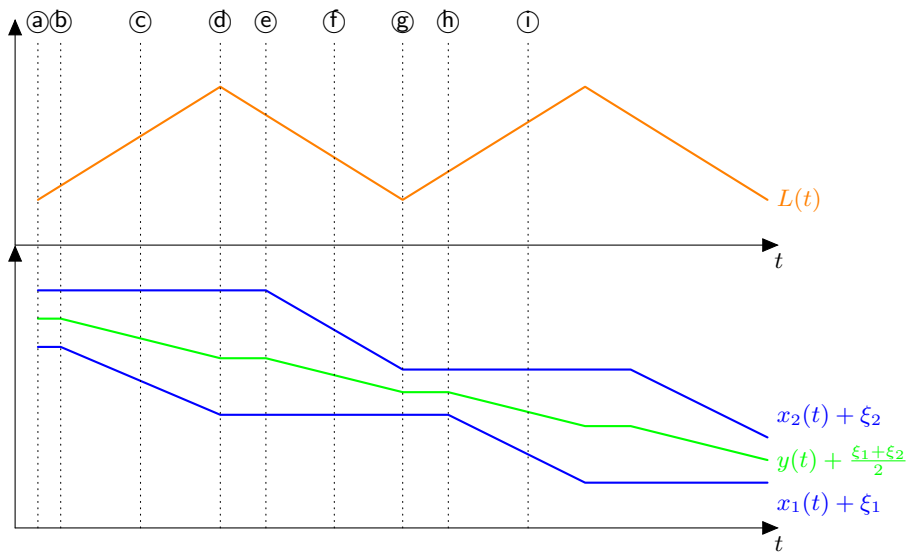

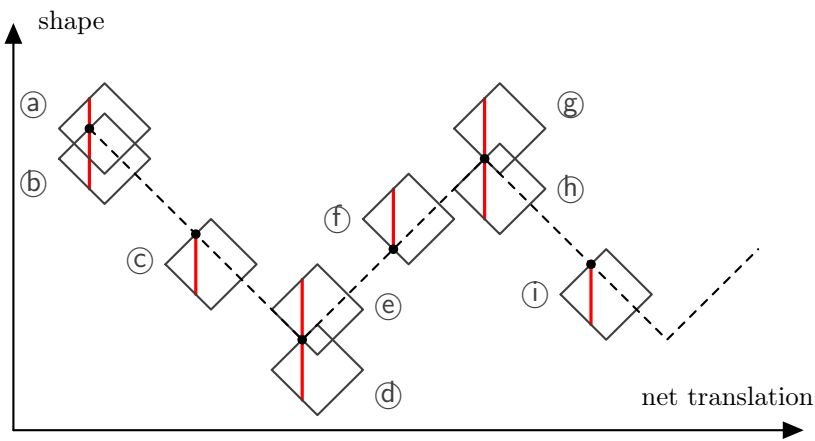
\begin{figure}[p]
	\begin{center}

\begin{tikzpicture}[line cap=round,line join=round,>=triangle 45,x=1.0cm,y=1.0cm,scale=0.2,thick]
\clip(-10.,-19.) rectangle (47.,11.);
\draw [color=red,  very thick] (-2.,4.)--(-2.,-2.);
\draw [color=red,  very thick] (12.,-8.)--(12.,-16.);
\draw [color=red, very thick] (24.,4.)--(24.,-4.);
\draw [color=red, very thick] (5.,-5.)--(5.,-9.);
\draw [color=red, very thick] (18.,-6.)--(18.,-2.);
\draw [color=red, very thick] (31.,-7.)--(31.,-11.);
\draw[color=uuuuuu] (-4.,2.) node[anchor=south east]{\cirlet{a}}-- (-1.,5.) -- (2.,2.) -- (-1.,-1.) -- cycle ;
\draw [color=uuuuuu] (-4.,0.)-- (-1.,3.)-- (2.,0.)-- (-1.,-3.)-- (-4.,0.) node[anchor=north east]{\cirlet{b}};
\draw [color=uuuuuu] (3.,-7.)-- (6.,-4.)-- (9.,-7.)-- (6.,-10.)-- (3.,-7.) node[anchor=east]{\cirlet{c}};
\draw [color=uuuuuu] (10.,-14.)-- (13.,-11.)-- (16.,-14.) node[anchor=north west]{\cirlet{d}}-- (13.,-17.)-- (10.,-14.);
\draw [color=uuuuuu] (10.,-10.)-- (13.,-7.)-- (16.,-10.) node[anchor=west]{\cirlet{e}}-- (13.,-13.)-- (10.,-10.);
\draw [color=uuuuuu] (16.,-4.)-- (19.,-1.)-- (22.,-4.)-- (19.,-7.)-- (16.,-4.) node[anchor=east]{\cirlet{f}};
\draw [color=uuuuuu] (22.,2.)-- (25.,5.)--(28.,2.) node[anchor=south west]{\cirlet{g}}-- (25.,-1.)-- (22.,2.);
\draw [color=uuuuuu] (22.,-2.)-- (25.,1.)-- (28.,-2.) node[anchor=west]{\cirlet{h}}-- (25.,-5.)-- (22.,-2.);
\draw [color=uuuuuu] (29.,-9.)-- (32.,-6.)-- (35.,-9.)-- (32.,-12.)-- (29.,-9.) node[anchor=east]{\cirlet{i}};
\draw [thick, dashed](-2.,2.)-- (12.,-12.) --(24.,0.)--(36.,-12.)-- (42.,-6.);

\draw [fill=black] (-2.,2.) circle (7pt);
\draw [fill=black] (12.,-12.) circle (7pt);
\draw [fill=black] (24.,0.) circle (7pt);
\draw [fill=black] (5.,-5.) circle (7pt);
\draw [fill=black] (18.,-6.) circle (7pt);
\draw [fill=black] (31.,-7.) circle (7pt);
\draw [->](-7,-18)--(-7,9);
\draw [->](-7,-18)--(45,-18);\small
\draw (-6,9)node[anchor=west] {shape};
\draw (40,-17)node[anchor=south] {net translation};
\end{tikzpicture}
		
\end{center}
\caption{The evolution of the crawler with two contact points of Example \ref{ex:2legs}, expressed in the sweeping process formulation.  In the picture above, the set $C(t,u)$ is represented at several times, corresponding to those of Figure \ref{fig:2motion}. The red sections are the subsets satisfying \eqref{eq:sweep_section}, corresponding to the section $\hCsh$ of the domain $\C$. We remark that the change of variable in \eqref{eq:sweep_projected} has inverted the orientation, so that the rightward motion of the sweeping process corresponds to a leftward motion of the crawler; the control on the shape subspace is analogously inverted.}
\label{fig:sweeping}
\end{figure}

\medbreak

Finally, we remark that, starting from the shape-restricted problem \eqref{eq:SF}, it is possible to obtain a \lq\lq classical\rq\rq\ sweeping process governed by a set dependent only on time and not on the current state.
Yet, the purpose of this short excursion in the sweeping process formulation is to illustrate, once again, the nature and role of the uniqueness condition \ref{cond:symmbreak}, and to make clearer the reverse route of the shape-reduction, showing how a $(N-1)$-dimensional input is used to control a $N$-dimensional system.

\subsection{Homogeneous continuous models} \label{sec:cont_indep}
We now consider continuous models of crawlers. 
Let us therefore assume that the body is represented  by a compact interval $\Omega=[\xi_a,\xi_b]\subset \R$. To describe the space of displacements $X$, we adopt the Sobolev space $X=W^{1,2}(\Omega, \R)$. As shape space $Z$ we choose  $W^{1,2}_0(\Omega, \R)$; the net translation of a crawler with displacement $x$ can be therefore naturally identified with its average (as discussed in the discrete case, we remark that this is only one of several possible parametrizations).
With the notation above, this means
\begin{equation}
	\pi(x) =\frac{1}{\abs{\Omega}}\int_{\Omega}x(\xi)\dd \xi \in Y=\R
\end{equation}
and
\begin{equation}
\sigma(x)=x-\pi(x) \in W^{1,2}_0(\Omega, \R)
\end{equation}
We set the internal energy of the crawler as
\begin{equation}
	\EE(t,x)= \frac{k}{2}\int_{\Omega}\left( x'(\xi)-\epsilon(t,\xi)\right)^2 \dd \xi
\end{equation}
The term $\epsilon(t,\xi)$, which represents our input on the system, is the \emph{active distortion}, corresponds to the spontaneous strain of the point $\xi$ of the body at time $t$.
Such model applies  well to active materials such as nematic elastomers~\cite{DeSGidNos15}.

The internal energy $\EE(t,x)$ can be written in the form \eqref{eq:energy} by taking
\begin{align} \label{xeq:Al_uniform_cont}
\scal{\A x}{x}= \frac{k}{2}\int_{\Omega}\left( x'(\xi)\right)^2 \dd \xi &&
\scal{\ell(t)}{x}=k\int_{\Omega} x'(\xi)\epsilon(t,\xi) \dd \xi
\end{align}
We observe that the operator $\A$ is positive semi-definite on $W^{1,2}(\Omega,\R)$ and its kernel is the space of constant functions.

As in the discrete case, we assume that the same  anisotropic  dry friction law is acting along all the body; since we have distributed interactions, we consider the frictional force per unit reference length
\begin{equation}
	f(\xi,t)=f(\dot{x}(\xi,t))\quad \text{ where }f(v)\in
	\begin{cases}
	\displaystyle \{\mud_-\}  & \text{ if $v<0$ }  \\
	\displaystyle [-\mud_+,\mud_-]  &  \text{ if $v=0$}  \\
	\displaystyle \{-\mud_+\}  & \text{ if $v>0$ } 
	\end{cases}
\end{equation}
where $\mud_+\pm$ are non-negative coefficients. 
	For a function $x(\xi)$, let us denote with $x^+$ and $x^-$ respectively its positive and negative part, namely
$$
x^+(\xi)=\begin{cases}
x(\xi) & \text{if $x(\xi)\geq 0$}\\ 0 & \text{if $x(\xi)\leq 0$}
\end{cases}
\qquad\qquad 
x^-(\xi)=\begin{cases}
0& \text{if $x(\xi)\geq 0$}\\ x(\xi) & \text{if $x(\xi)\leq 0$}
\end{cases}
$$
so that $x(\xi)=x^+(\xi)+x^-(\xi)$
We  define the dissipation potential as
\begin{equation} \label{eq:defRcont_omog}
	\RR(\dot x(\cdot,t))=\int_{\Omega} -\mud_- \dot x^-(\xi,t)+\mud_+ \dot x^+(\xi,t) \dd \xi
\end{equation}

We observe that the dissipation potential $\RR$ thus defined is convex, continuous  and positively homogeneous of degree one.  The admissibility condition \eqref{eq:RISadmisgen} becomes
\begin{equation}
	 -k x'(\xi)\in [-\mud_-,\mud_+] \qquad \text{for a.e.~$\xi\in\Omega$}
\end{equation}

We now look at the interpretation of condition \ref{cond:symmbreak} in this framework.

\begin{lemma} For a homogeneous continuous crawler, condition \ref{cond:symmbreak} is satisfied if and only if the coefficients $\mud_+,\mud_-$ are strictly positive. 
\label{lemma:SBcont}
\end{lemma}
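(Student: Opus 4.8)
The plan is to reduce condition \ref{cond:symmbreak} to a one–dimensional question and then read off the answer from the piecewise–linear structure of the dissipation. Since $\sigma(x)=x-\pi(x)$ with $\pi$ returning the (constant) average, the fibre $\{x\in X:\sigma(x)=\tilde z\}$ is exactly $\{\chi(\tilde z,v)=\tilde z+v:v\in\R\}$, parametrised by $v=\pi(x)\in Y=\R$. Hence \ref{cond:symmbreak} is equivalent to requiring that, for every $\tilde z\in Z$, the convex function $g_{\tilde z}(v):=\RR(\chi(\tilde z,v))=\int_\Omega h(\tilde z(\xi)+v)\dd\xi$ has a \emph{unique} minimiser on $\R$, where $h(s)=-\mud_- s^-+\mud_+ s^+$ is piecewise linear with slope $-\mud_-$ on $(-\infty,0)$ and slope $\mud_+$ on $(0,+\infty)$.

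For necessity I argue by contraposition: if $\mud_-=0$, I test with $\tilde z\equiv 0$, for which $g_0(v)=\mud_+\abs{\Omega}\,v^+$ vanishes on the whole ray $(-\infty,0]$, so the minimiser is not unique and \ref{cond:symmbreak} fails; the case $\mud_+=0$ is symmetric (now $g_0$ vanishes on $[0,+\infty)$). Therefore \ref{cond:symmbreak} forces $\mud_\pm>0$.

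For sufficiency, assume $\mud_\pm>0$. Using the embedding $W^{1,2}(\Omega)\hookrightarrow C(\overline\Omega)$, $\tilde z$ is bounded, so for $v$ large enough $\tilde z+v>0$ (resp. $<0$) on all of $\Omega$ and, recalling $\int_\Omega\tilde z=0$, $g_{\tilde z}(v)=\mud_+\abs{\Omega}v$ (resp. $-\mud_-\abs{\Omega}v$); hence $g_{\tilde z}$ is coercive and its minimiser set is a nonempty compact interval $[v_1,v_2]$. Suppose, for contradiction, $v_1<v_2$, so that $g_{\tilde z}$ is affine (indeed constant) on $[v_1,v_2]$. Being the integral of the convex maps $v\mapsto h(\tilde z(\xi)+v)$, each of these must be affine on $[v_1,v_2]$ for a.e.\ $\xi$, which happens exactly when $-\tilde z(\xi)\notin(v_1,v_2)$ (the only kink of $h(\tilde z(\xi)+\cdot)$ is at $v=-\tilde z(\xi)$, and the two slopes $-\mud_-<0<\mud_+$ differ). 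Thus $\meas\bigl(\tilde z^{-1}((-v_2,-v_1))\bigr)=0$; but this set is open in $\Omega$, hence empty, which means the range $\tilde z(\Omega)$ — a closed interval by the intermediate value theorem — misses $(-v_2,-v_1)$. Then either $\max_\Omega\tilde z\le -v_2$ or $\min_\Omega\tilde z\ge -v_1$: in the first case $\tilde z+v\le 0$ on $\Omega$ for every $v\in[v_1,v_2]$, so there $g_{\tilde z}(v)=-\mud_-\abs{\Omega}v$ is strictly decreasing, contradicting constancy; the second case is symmetric. Hence $v_1=v_2$, and \ref{cond:symmbreak} holds (the degenerate case $\tilde z\equiv 0$ is covered by the same argument, with the range reducing to $\{0\}$).

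The main obstacle is the sufficiency step, and within it the passage from \lq\lq$g_{\tilde z}$ has a flat segment\rq\rq\ to a contradiction: one must exploit that such a segment would force the continuous function $\tilde z$ to avoid an open interval lying in the interior of its own range, which is impossible because a nonempty open subset of $\Omega$ has positive measure. Minor care is also needed in identifying essential and pointwise extrema of $\tilde z$ (legitimate since $\tilde z$ is continuous) and, if one does not phrase the argument uniformly, in treating the constant shape separately.
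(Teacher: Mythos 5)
Your proof is correct and follows essentially the same route as the paper's: reduce \ref{cond:symmbreak} to uniqueness of the minimiser of the convex, coercive restriction of $\RR$ to each fibre $\{\tilde z+v: v\in\R\}$, handle necessity via constant functions, and rule out a flat segment of minimisers by exploiting the continuity of $\tilde z$. The only (immaterial) difference is in how that last contradiction is reached: the paper computes the one-sided derivative of the restriction and uses strict monotonicity of the measures of the level sets, whereas you locate the kinks of the integrand and observe that $\tilde z$ cannot avoid an open interval interior to its own range.
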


\begin{proof}
	First of all, we observe that condition \ref{cond:symmbreak} is equivalent to the uniqueness of the minimum point for each of the restrictions of $\RR$ to the line $\chi(\tilde z,\R)\subset X$, with  $\tilde z \in Z$.
	
 We notice that if the coefficients are both strictly positive, then $\RR$ is coercive and therefore each restriction has at least one minimum point. Let us assume that such a minimum is not unique. Then, by the convexity of $\RR$, there exists a minimizer $\tilde x\in X$ and a positive constant $\Lambda>0$ such that
\begin{equation}\label{eq:lemma_cont_omog}
		\RR(\tilde x)=\RR(\tilde x +\lambda) \qquad \text{for every $\lambda\in[0,\Lambda]$}
\end{equation}
	Thus, by the definition of $\RR$ in \eqref{eq:defRcont_omog}, we can compute, for every $\bar \lambda\in(0,\Lambda)$, the right derivative
	\begin{align*}
	\lim_{\lambda \to \bar \lambda^+} \RR (\tilde x +\lambda)&=
	\mud_+ \meas \left[  \tilde{x}^{-1}\bigl([-\bar \lambda,+\infty)\bigr)\right] -
	\mud_- \meas \left[ \tilde{x}^{-1}\bigl((-\infty, -\bar \lambda)\bigr)\right]  =0
	\end{align*}

We observe that such a condition can be satisfied only if one of the coefficients $\mud_\pm$ is equal to zero. Indeed, if they where both strictly positive, then we would have, for every $\bar \lambda\in(0,\Lambda)$
\begin{equation}\label{eq:lemma_cont_omog2}
\frac{\mud_+}{\mud_-}	 \meas \left[  \tilde{x}^{-1}\bigl([-\bar \lambda,+\infty)\bigr)\right] =
\meas \left[ \tilde{x}^{-1}\bigl((-\infty, -\bar \lambda)\bigr)\right]
\end{equation}
requiring both pre-images to have positive measure, meaning that $(-\Lambda,0)$ is included in the image of $\tilde{x}$. Since $\tilde{x}$ is continuous, this implies that the two measures of the pre-images are both strictly monotone with respect to $\bar{\lambda}\in (0,\Lambda)$; but  one decreasing while the other one increasing. This means that \eqref{eq:lemma_cont_omog2} can be true for at most one $\bar{\lambda}\in (0,\Lambda)$, in contradiction with \eqref{eq:lemma_cont_omog}. 
Hence having strictly positive coefficients $\mud_\pm$ is a sufficient condition for \ref{cond:symmbreak}.

To show that such a condition is also necessary, let us consider the restriction of $\RR$ to the line $\chi(0,\R)\subset X$, that is exactly the set of all  constant functions on $\Omega$. Restricted to this set, $\RR$ has always a minimum in the constant zero function, with value zero. If $\mud_+=0$, then $\RR$ assumes value zero also on all the positive constant functions, that become therefore all minimizers. Analogously, if $\mud_-=0$,  all the negative constant functions  become minimizers. Hence it is necessary for \ref{cond:symmbreak} that both $\mud_\pm$ are not zero.	 
\end{proof}

\begin{example} In \cite{DeSGidNos15}, the motility of a homogeneous continuous crawler with anisotropic dry friction has been explicitly solved for a cyclic input consisting on a homogeneous contraction-extension, namely $\epsilon(t,\xi)=\epsilon_0(t)$, with $\epsilon_0(t)$ a triangular wave oscillating between $0$ and $\Delta\epsilon>0$. Indicating with  $l=\xi_b-\xi_a$ the rest length of the crawler for $\epsilon_0=0$, and assuming without loss of generality that $\mu_->\mu_+$, it is shown that on average the crawler moves only if $\Delta\epsilon>\mu_+l/k$ and the displacement produced in a cycle, in the direction of lower friction, is equal to
	$$
	l\left( \Delta\epsilon-\frac{\mu_+l}{k}\right) \frac{\mu_--\mu_+}{\mu_-+\mu_+}
	$$  
	To conclude, we observe that the case $\mu_+=\mu_-$ with a periodic contraction-extension cycle is not significant for motility, since, by basic symmetry arguments,  it  produces a  displacement with zero net translation.
	
\end{example}


\subsection{Heterogeneous  models} \label{sec:hybrid}

\paragraph{Heterogeneous elasticity}

We now generalize the discrete case of Section \ref{sec:disc_indep}, allowing the elastic constant to change from spring to spring. Moreover, we can also consider different connection structures between the elements of the body, instead of the simple chain graph used in Section \ref{sec:disc_indep}. We illustrate these ideas with the following example.

\begin{example} In a crawler with three legs, i.e.~$\Omega=\{\xi_1,\xi_2,\xi_3\}$, we can consider three springs, with positive elastic constants $k_\alpha,k_\beta,k_\gamma$ and time-dependent term $L_\alpha,L_\beta,L_\gamma$, joining respectively the couples of points $(\xi_1,\xi_2)$, $(\xi_2,\xi_3)$ and $(\xi_1,\xi_3)$. With the same notation of Section \ref{sec:disc_indep}, we obtain
$$
\A=\frac{1}{2}\,\begin{pmatrix} 
 k_\alpha+k_\gamma & - k_\alpha & -k_\gamma & \\
-k_\alpha &  k_\alpha+k_\beta & -k_\beta& \\
-k_\gamma  & -k_\beta & k_\beta+k_\gamma & \\
\end{pmatrix}
\qquad
\ell(t)=\begin{pmatrix}
-k_\alpha L_\alpha(t)-k_\gamma L_\gamma(t)\\
-k_\beta L_\beta(t)+k_\alpha L_\alpha(t)\\
k_\beta L_\beta(t) + k_\gamma L_\gamma(t)\\
\end{pmatrix}
$$
or, in the shape coordinates
$$
\Ash=\frac{1}{2}\,\begin{pmatrix} 
k_\alpha +k_\gamma & k_\gamma  \\
k_\gamma  & k_\beta+k_\gamma 
\end{pmatrix}
\qquad
\lsh(t)=\begin{pmatrix}
k_\alpha L_\alpha(t)+k_\gamma L_\gamma(t)\\
k_\beta L_\beta(t)+k_\gamma L_\gamma(t)\\
\end{pmatrix}
$$
\end{example}

Analogously, for continuous models as those of Section \ref{sec:cont_indep}, we can assume that the elastic modulus changes along the body of the crawler. Thus, for any $k\in L^\infty (\Omega,\R)$ with strictly positive values, we can set
\begin{align}  \label{eq:energ_cont}
\scal{\A x}{x}= \frac{1}{2}\int_{\Omega}k(\xi)\left( x'(\xi)\right)^2 \dd \xi &&
\scal{\ell(t)}{x}=\int_{\Omega}k(\xi) x'(\xi)\epsilon(t,\xi) \dd \xi
\end{align}
an observe that the desired properties of the operator $\A$ are preserved.

We remark that such  more general description of the internal energy of the crawler does not affect the modelling of the frictional interaction. In particular, the characterizations of condition \ref{cond:symmbreak} given in Lemmas \ref{lemma:SBdiscr} and \ref{lemma:SBcont} remain valid also in this more general framework, provided that the homogeneity of the model is preserved in the part concerning friction.

\paragraph{Heterogeneous friction}

For discrete models as those of Section \ref{sec:disc_indep}, it is possible to pick different directional friction coefficients $\mu_\pm^i$ for each element $\xi_i$ of the body. In this case the total dissipation potential is defined as
\begin{equation}\label{xeq:dissipdiscr_asy}
\RR(u)=\sum_{i=1}^N \RR_i (u_i) \qquad\text{with}\, \RR_i(u_i)=\begin{cases}
-\mu_-^iu_i  &  \text{if $u_i\leq 0$} \\
\mu_+^iu_i  &  \text{if $u_i\geq 0$}
\end{cases}
\end{equation}
Similarly, for continuous models as those of Section \ref{sec:cont_indep}, we can assume that the dissipation coefficients change along the body of the crawler, and so define define the dissipation potential as
\begin{equation} \label{xeq:dissipcont}
\RR(u)=\int_{\Omega} -\mud_-(\xi) \dot u^-(\xi)+\mud_+(\xi) \dot u^+(\xi) \dd \xi
\end{equation}
where the functions  $\mud_\pm \in L^2(\Omega,\R)$ assume non-negative values.

We observe that in both situations the dissipation potentials $\RR$ thus defined are convex, continuous  and positively homogeneous of degree one. 

This is sufficient to guarantee the existence of a solution for \eqref{eq:EvP_indep}; its uniqueness  requires an additional case-specific study of condition like \ref{cond:symmbreak}. We can still observe that a necessary condition for uniqueness is that each of the $\mu_\pm$ is not the constant zero function on $\Omega$ in the continuous case, or zero on all the point in the discrete case. Otherwise it would be possible to rigidly translate the body in the corresponding direction without dissipation (e.g.~$\RR(\chi(0,v))=0$ for every $v\in[0,+\infty)$, if $\mu_+\equiv 0$). The same argument of Lemma \ref{lemma:SBcont} can be adapted to show that having strictly positive coefficients $\mu_\pm$ is a sufficient condition for \ref{cond:symmbreak}.
In the discrete case we have the following result.

\begin{lemma} \label{lemma:SBdiscr_asym} 
For a discrete crawler with heterogeneous friction defined as in \eqref{xeq:dissipdiscr_asy}, condition \ref{cond:symmbreak} is satisfied if and only if, for every subset $J\subseteq\{1,2,\dots,N\}$ we have
\begin{equation}\label{xeq:SBdiscr_asym}
\sum_{i\in J}\mu_-^i +\sum_{i\in J^c}\mu_+^i \neq 0
\end{equation}
where $J^c=\{1,2,\dots,N\}\setminus J$.
\end{lemma}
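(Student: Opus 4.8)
The plan is to follow the proof of Lemma \ref{lemma:SBdiscr} almost line for line, replacing the single pair $(\mu_-,\mu_+)$ by the family $\{\mu_\pm^i\}_{i=1}^N$ and the counting function $m(x)$ by the \emph{set} of indices along which the body slides backward. As there, the first step is to reformulate condition \ref{cond:symmbreak}: it holds if and only if, for every $\tilde z\in Z$, the restriction of $\RR$ to the translation line $\chi(\tilde z,\R)=\{\tilde x+\lambda\mathds{1}_N:\lambda\in\R\}$ — $\tilde x$ being any displacement with $\sigma(\tilde x)=\tilde z$ — has a unique minimizer. Since each $\RR_i$ in \eqref{xeq:dissipdiscr_asy} is convex and piecewise affine with its only kink at the origin, the map $\lambda\mapsto\RR(\tilde x+\lambda\mathds{1}_N)=\sum_{i=1}^N\RR_i(\tilde x_i+\lambda)$ is convex and piecewise affine, with breakpoints among $\{-\tilde x_1,\dots,-\tilde x_N\}$; on an interval free of breakpoints, writing $J=\{i:\tilde x_i+\lambda<0\}$ and $J^c=\{i:\tilde x_i+\lambda>0\}$, its slope is $\sum_{i\in J^c}\mu_+^i-\sum_{i\in J}\mu_-^i$. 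The balance condition \eqref{xeq:SBdiscr_asym} — i.e.\ that no splitting $J$ makes $\sum_{i\in J}\mu_-^i$ equal to $\sum_{i\in J^c}\mu_+^i$ — is then precisely the requirement that none of these slopes vanishes.

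For the implication ``\eqref{xeq:SBdiscr_asym} $\Rightarrow$ \ref{cond:symmbreak}'' I would first use the extreme splittings: $J=\{1,\dots,N\}$ (active for $\lambda\to-\infty$) and $J=\emptyset$ (active for $\lambda\to+\infty$) give, by \eqref{xeq:SBdiscr_asym}, $\sum_i\mu_-^i\neq0$ and $\sum_i\mu_+^i\neq0$, so the leftmost slope $-\sum_i\mu_-^i$ is strictly negative and the rightmost $\sum_i\mu_+^i$ strictly positive; hence $\RR(\tilde x+\,\cdot\,\mathds{1}_N)$ is coercive and attains its minimum. Since the slope is nondecreasing in $\lambda$ (convexity), jumps only at the breakpoints, and — by \eqref{xeq:SBdiscr_asym} — is never zero, it is strictly negative up to the minimizing breakpoint and strictly positive past it; the minimizer is that single breakpoint, giving uniqueness and hence \ref{cond:symmbreak}.

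For the converse I would proceed by contraposition, using an explicit family in the spirit of the vectors $x^\lambda$ from the proof of Lemma \ref{lemma:SBdiscr}. If \eqref{xeq:SBdiscr_asym} fails there is $J$ with $\sum_{i\in J}\mu_-^i=\sum_{i\in J^c}\mu_+^i=:s$. Set $\tilde x_i=-1$ for $i\in J$ and $\tilde x_i=1$ for $i\in J^c$: then the only breakpoints of $\RR(\tilde x+\lambda\mathds{1}_N)$ are $\lambda=\pm1$, the splitting active on $(-1,1)$ is exactly $J$, and the slope there equals $s-s=0$, so $\RR(\tilde x+\lambda\mathds{1}_N)$ is constant on $[-1,1]$. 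Since the slope for $\lambda<-1$ is $-\sum_i\mu_-^i\le0$ and for $\lambda>1$ is $\sum_i\mu_+^i\ge0$, the flat segment $[-1,1]$ is contained in the set of minimizers (and if $\sum_i\mu_-^i=0$ or $\sum_i\mu_+^i=0$ the restriction fails to have a unique minimizer in any case), so $\RR$ restricted to $\chi(\sigma(\tilde x),\R)$ has a non-unique minimum and \ref{cond:symmbreak} is violated. Specialising to $\mu_\pm^i$ independent of $i$, with $|J|=m$, this recovers condition \ref{it:SBdiscr1} of Lemma \ref{lemma:SBdiscr}, which I would note as a consistency check.

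The delicate point is the ``realizability'' of an arbitrary splitting $J$ in the converse: one must pick the representative $\tilde x$ so that $J$ governs not just some interval but the interval containing the minimum of $\RR(\tilde x+\,\cdot\,\mathds{1}_N)$. The choice $\tilde x_i=\mp1$ does exactly this, because it confines all breakpoints to $\{-1,1\}$ and then convexity forces a middle interval of vanishing slope to be the minimizing one. The remaining bookkeeping — ties among the $\tilde x_i$; the convention, inherited from Lemma \ref{lemma:SBdiscr}, that components with $\tilde x_i+\lambda=0$ are counted on the $\mu_+$ side when differentiating along $+\mathds{1}_N$; and the degenerate boundary splittings $J=\emptyset$, $J=\{1,\dots,N\}$ which merely encode coercivity — is routine and handled exactly as in that proof.
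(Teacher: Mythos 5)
Your proof is correct and follows essentially the same route as the paper's: restriction of $\RR$ to the translation lines $\chi(\tilde z,\R)$, the slope computation in terms of the splitting $J$, and an explicit family of minimizers (your $\tilde x_i=\mp1$, the paper's $x^\lambda$) for the converse; your version is in fact more explicit about coercivity and the location of the breakpoints. Note only that you have silently read \eqref{xeq:SBdiscr_asym} as $\sum_{i\in J}\mu_-^i-\sum_{i\in J^c}\mu_+^i\neq0$, which is the correct condition (consistent with condition \ref{it:SBdiscr1} of Lemma \ref{lemma:SBdiscr}); the $+$ sign in the displayed statement and in the paper's own slope computation appears to be a typo.
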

\begin{proof}
We proceed as in the proof of Lemma \ref{lemma:SBdiscr}
and assume that the points $\tilde x+\lambda \mathds{1}_{\!N}$ are minimizers for $\RR$ for every $0\leq \lambda\leq \Lambda$.
Without loss of generality we can assume that all the the minimizers in this family have all non-zero components: we therefore denote with $J\subseteq\{1,2,\dots,N\}$ the set of indexes such that the corresponding component of the minimizers is strictly negative
	Thus, for every $0\leq \lambda\leq \Lambda$, we have a contradiction for \eqref{xeq:SBdiscr_asym}, since
	$$
	0=\RR (\tilde x+\lambda \mathds{1}_{\!N}) -\RR(\tilde x)= \lambda \left[\sum_{i\in J}\mu_-^i +\sum_{i\in J^c}\mu_+^i \right]
	$$
Hence we have shown that \eqref{xeq:SBdiscr_asym} implies \ref{cond:symmbreak}. To check the converse implication, we proceed again as done in Lemma \ref{lemma:SBdiscr}. If by contradiction there exists a set $J$ for which \eqref{xeq:SBdiscr_asym} is false, we can define, for  $\lambda\in [0,1]$, the family of vectors $x^\lambda\in \R^N$ having components $x^\lambda_i=\lambda$ if $i\in J$, while $x^\lambda_i=1-\lambda$ if $i\notin J$. Arguing as above, we see that $\RR$ is constant on these vectors $x^\lambda$, that therefore are all minimizers, in contradiction with \ref{cond:symmbreak}.
\end{proof}

\begin{remark}[Hybrid models]
Above we illustrated some basic and likely situations, yet the flexibility of the rate-independent systems framework applies to more general situations

For instance, we remark that discrete models can be seen as continuous ones, where the dissipation functional is the a sum of suitable Dirac functions. Indeed, in the discrete case we are implicitly solving the evolution problem on each elastic link, assuming a constant deformation along the element.
This step proves really useful, producing a radical simplification of the model.

With the same spirit we can consider hybrid models, having both a continuous and a discrete  part. For example, we can consider a crawler whose contact set $\Omega$ is the union of an interval and a singleton, assuming that the two elements are joined by an elastic link. As above, we can implicitly assume the linear behaviour of the link and take as state space $X=W^{1,2}([0,1],\R)\otimes \R$, with the first factor representing the displacement of the continuous part, and the other one the displacement of the isolated contact point. Internal energy and dissipation potential are defined analogously to what we have done in this section, noticing that the shape subspace $Z$ depends on how the elastic link is attached to the continuous component. 
 \end{remark}

	
\subsection{A few words on planar crawlers} \label{sec:planarcrawler}

This paper is focused on unidimensional rectilinear crawlers, namely on the case $Y=\R$. Yet the same setup and results, as Theorems \ref{th:indep} and \ref{th:dep}, can be applied also to locomotors in higher dimensional spaces. In particular we refer to crawlers on a plane, corresponding to $Y=\R^2$. This transition, however, produces a remarkable rise in the complexity of the system and the emergence of new modelling issues. Here, we briefly discuss the main points.

\begin{itemize}
	\item \emph{Complexity in deformations: shear/curvature.} Planar systems introduce a qualitatively different family of deformations, those associate to shear of bidimensional bodies, or bending in unidimensional bodies. Such deformations provide a second kind of motility strategies: indeed, changes in the natural curvature are employed in the motility of snakes and snake-like robots. On the other hand, this correspond to a considerable increase of the degrees of freedom of the system.
	\item \emph{Complexity in anisotropic friction.} For rectilinear crawlers, friction is described by only two parameters $\mu_\pm$ (possibly depending on the contact point $\xi$ of the crawler). In planar models, these two parameters have to be replaced by a function $\mu\colon \Sph_1\to [0,+\infty]$ defined on the unit circle. Except for the case of isotropic friction, the choice of a law is far from being trivial. Indeed, mechanisms producing anisotropy, as scales, hairs or textures, generally present a complex behaviour, allowing several possible 
	laws for the directional dependence of the frictional coefficient $\mu$.	
	\item \emph{Orientation is part of the shape.} For rectilinear crawlers we defined the state $x\colon \Omega \to \R$ as the function assigning to each relevant point $\xi\in\Omega$ the respective displacement. This approach is still valid in higher dimension, but may create additional issues. For instance,  in this way the information on the \lq\lq orientation\rq\rq\ of the crawler is comprised in the shape $z$. This implies that the anisotropy in friction is in the environment and not in the crawler; in other words this formulation is appropriate for locomotors with a isotropic body surface moving on an anisotropic environment, but, in general, no longer for crawler with hair or scales on their ``skin''. Such situation can be studied by introducing a state-dependence of the dissipation potential.
\end{itemize}

These elements make the class of planar crawlers very large and variegated. Whereas the study of a general and inclusive model of planar crawler is still possible, it is convenient to first reduce the complexity of the system with suitable assumptions on admissible shape changes, elasticity and dissipation. Indeed, in accordance to the the principle of simplexity, soft planar crawlers usually have only a small number of controls, in contrast with the larger theoretical number of degree of freedom. At the same time, the structure of the crawler is aimed at emphasizing the family of shape changes directly employed by the motility strategy, reducing the effects of the other deformations.
For instance, in multi-link crawlers exploiting changes in curvature to produce an ondulatory snake-like motion, longitudinal deformations are negligible. Analogously, in discrete models, considering collinear interaction between each couple of points (i.e. links with variable length), no forces are assumed for angular deformations. 

In this sense, extending our view also to rate-dependent models, we can identify a few main clusters of planar locomotion strategies.

As in rectilinear crawlers, a first class of models is given by the motility of a system composed of point masses.
The motility at quasi-static regime of a three-points planar crawler, subject to isotropic dry friction, has been studied in \cite{BorFigChe14}. In \cite{EldJac16} a four-points three-dimensional crawler moving on a plane is analysed, assuming viscous friction and including inertial terms. 
In both cases the structure of the crawler is given a family of springs joining each couple of points, with the actuation corresponding to changes in their rest lengths. 

A second class of models studied in literature is that of multi-link crawlers. In this case, the locomotor consists of a chain of $N$ joined rigid segments, and actuation is obtained by controlling the angle between each couple of adjacent segments. The motility of such systems have been studied both at a quasistatic regime, or alternating fast and slow phases, always in the case of isotropic friction (cf.~\cite{Che05} and references quoted therein).

Models with friction anisotropy become essential if we consider the  ondulatory locomotion of snakes.
In snakes, the scaled skin produces a very strong friction force opposing lateral displacements, and a weaker anisotropic friction force opposing longitudinal movements and  facilitating forward sliding.
A common way to formulate snake-like locomotion is to adopt \lq\lq snake in a tube\rq\rq\  models: lateral slipping is prohibited (friction is infinite in every direction that is not longitudinal), resulting in a gliding of the snake along a sinuous \lq\lq track\rq\rq\ (cf.~\cite{CicDeS15,Hu2009} and references quoted therein). In these situations, motility is no longer produced by contraction/elongation cycles, by changes in the spontaneous curvature.

These examples illustrate, once again, how planar crawling is a vast and challenging topic. 
We believe that the framework of rate-independent systems can be useful also in the study of such situations. Yet, as we proposed to show with our brief digression, additional -- but commonplace -- assumptions on the mechanical properties of the crawler may be mandatory in order to reduce the complexity of the system, and thus allow a deep and detailed analytical description of the motility. 

We now return our focus to unidimensional rectilinear crawlers, introducing a different source of complexity: friction manipulation.




\section{Crawling with time-dependent friction}
\label{sec:dep_theory}

As we discussed in the introduction, some crawlers have the ability to change the friction coefficients in time, improving their locomotion capabilities. To incorporate this feature in our description, we have to introduce a time-dependence of the dissipation potential $\RR$. 

Thus, analogously to what we have done in Section \ref{sec:sub_indep}, we study the  evolution of a system  described by the force balance
\begin{equation}
0\in \partial_{\dot x} \RR(t,\dot x(t))+D_x\EE(t,x)
\label{eq:EvP_time}
\end{equation}
We say that a continuous function $x\colon [0,T]\to X$, differentiable for almost all $t\in(0,T)$, is a \emph{solution} of the problem \eqref{eq:EvP_time} with starting point $x_0$ if $x(0)=x_0$ and  $x(t)$  satisfies \eqref{eq:EvP_time} for almost all $t\in(0,T)$. As for the case of time-independent friction, an initial condition $x(0)=x_0$ is admissible if
\begin{equation}
-D_x\EE(0,x_0)\in  \partial \RR(0,0)=\C (0)
\label{eq:RISadmisgen_time}
\end{equation} 

To extend our approach to the case of time-dependent friction, we have to assume on $\RR$ some conditions of regularity in time, in order to avoid criticalities analogous to those discussed in \cite{Alessi16} for state-dependent friction. Moreover, to simplify our exposition, we consider dissipation potentials with values in $[0,+\infty)$, instead of $[0,+\infty]$.

\begin{defin} \label{def:Psi_regular}
	Let $\Psi\colon X\to [0,+\infty)$ be convex, lower semi-continuous and positive homogeneous of degree one. A function $\RR\colon [0,T]\times X\to [0,+\infty)$ is a \emph{$\Psi$-regular dissipation potential} if the following conditions are satisfied:
	\begin{enumerate}[label=\textup{($\Psi$\arabic*)},start=0]
		\item for every $t\in [0,T]$, the functional $\RR(t,\cdot)\colon X\to [0,+\infty)$ is convex, continuous and positive homogeneous of degree one;  \label{cond:regular0}
		\item there exist two positive constants $\alpha^* \geq \alpha_*>0$ such that \label{cond:regular1}
		$$
		\alpha_* \Psi (x)\leq \RR(t,x) \leq \alpha^*\Psi (x)
		\qquad \text{for every $(t,x)\in[0,T]\times X$}
		$$	
		\item there exists a constant $\Lambda>0$ such that \label{cond:regular2}
		$$
		\abs{\RR(t,x)-\RR(s,x)}\leq \Lambda\abs{t-s}\Psi (x) \qquad \text{for every $t,s\in [0,T]$, $x\in X$}
		$$	
	\end{enumerate}
\end{defin}

We now replace Theorem \ref{th:mielke} with the following result by Heida and Mielke \cite[Theorems 4.1 and 4.7]{HeiMie16}.

\begin{theorem}\label{th:heida_mielke}
	Let us consider the evolution of a system \eqref{eq:EvP_time}, where $\EE$ is of the form $\EE(t,x)=\scal{\A x}{x}-\scal{\ell(t)}{x}$, with $\A\colon X \to X^*$ symmetric positive-definite,  $\ell(t)\in W^{1,\infty}([0,T],X^*)$, and $\RR\colon [0,T]\times X\to [0,+\infty)$  a $\Psi$-regular dissipation potential. Moreover, we assume that there exists a constant $c_\Psi>0$ such that $\Psi(x)\leq c_\Psi\norm{x}_X$ for every $x\in X$. 
	Then, for every initial point $x_0$ satisfying \eqref{eq:RISadmisgen_time} there exists a unique solution $x\in\Clip([0,T],X)$, with $x(0)=x_0$, that satisfies \eqref{eq:EvP_time} for almost all $t\in [0,T]$.
\end{theorem}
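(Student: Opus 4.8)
Following the now-standard theory for rate-independent systems with a time-dependent, non-degenerate dissipation, the natural strategy is incremental (minimizing-movements) approximation, a priori estimates, compactness, passage to the limit, and a uniqueness argument based on the uniform convexity of $\EE(t,\cdot)$; indeed, this is the scheme carried out in \cite{HeiMie16}, and the crucial structural point is that a positive-definite $\A$ makes $\EE(t,\cdot)$ coercive and uniformly convex, curing both the potential loss of compactness and the non-uniqueness that plagued \eqref{eq:EvP_indep}.

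\emph{Step 1 (time discretization).} Fix partitions $0=t^N_0<t^N_1<\dots<t^N_N=T$ with vanishing fineness, set $x^N_0=x_0$ and define recursively
\[
x^N_k\in\argmin_{x\in X}\Bigl\{\EE(t^N_k,x)+\RR\bigl(t^N_k,\,x-x^N_{k-1}\bigr)\Bigr\}.
\]
Each functional is strictly convex (the quadratic part $\scal{\A x}{x}$ is, since $\A$ is positive-definite) and coercive ($\EE(t,\cdot)$ is coercive and $\RR\ge0$), so the minimizer exists and is unique; its optimality condition is the discrete analogue $0\in D_x\EE(t^N_k,x^N_k)+\partial_{\dot x}\RR(t^N_k,x^N_k-x^N_{k-1})$ of \eqref{eq:EvP_time}. \emph{Step 2 (estimates and compactness).} I would then prove a uniform energy bound and, crucially, an equi-Lipschitz bound $\norm{x^N_k-x^N_{k-1}}_X\le C\,(t^N_k-t^N_{k-1})$ with $C$ independent of $N$: testing with the competitor $x^N_{k-1}$ and using the uniform convexity of $\EE(t^N_k,\cdot)$, one controls $\norm{x^N_k-x^N_{k-1}}_X^2$ by the one-step increments of $\ell$ and of $\RR$, and the hypotheses $\ell\in W^{1,\infty}$, the two-sided equivalence \ref{cond:regular1}, the Lipschitz-in-time bound \ref{cond:regular2} and $\Psi(x)\le c_\Psi\norm{x}_X$ combine to close this estimate uniformly in the mesh. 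The piecewise-affine interpolants are then equi-Lipschitz and bounded, so by Arzelà--Ascoli a subsequence converges uniformly to some $x\in\Clip([0,T],X)$ with $x(0)=x_0$.

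\emph{Step 3 (limit and uniqueness).} I would show the interpolants satisfy a discrete energy--dissipation inequality, pass to the limit using continuity of $\EE$ and lower semicontinuity of the dissipation (handled through the equivalence with the fixed potential $\Psi$ and the $t$-continuity of $\RR$), and conclude that $x$ is an energetic solution of $(X,\EE,\RR)$; since $\EE(t,\cdot)$ is smooth and $\RR(t,\cdot)$ is convex, lower semicontinuous and $1$-homogeneous, the energetic formulation upgrades to the pointwise inclusion \eqref{eq:EvP_time} for a.e.\ $t$, and the admissibility condition \eqref{eq:RISadmisgen_time} propagates to every $t$. For uniqueness, given two solutions $x_1,x_2$ with $x_1(0)=x_2(0)$, I would test the inclusion for $x_i$ with $\dot x_j-\dot x_i$, add the two subdifferential inequalities, and exploit monotonicity of $\partial_{\dot x}\RR(t,\cdot)$ \emph{at the fixed instant $t$} — so the time-dependence of $\RR$ is immaterial here — to obtain $\scal{D_x\EE(t,x_1)-D_x\EE(t,x_2)}{\dot x_1-\dot x_2}=\scal{\A(x_1-x_2)}{\dot x_1-\dot x_2}\le0$, i.e.\ $\tfrac{\dd}{\dd t}\scal{\A(x_1-x_2)}{x_1-x_2}\le0$; since $\A$ is positive-definite and the difference vanishes at $t=0$, we get $x_1\equiv x_2$.

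\emph{Main obstacle.} The genuinely delicate point is the equi-Lipschitz estimate of Step 2: one must quantify how far the incremental minimizer can travel when \emph{both} the external load and the dissipation potential have changed over a single time step, and make that bound uniform in the partition. Conditions \ref{cond:regular1}--\ref{cond:regular2}, together with the coercivity granted by a positive-definite $\A$, are tailored precisely to make this possible, and this estimate is the technical heart of \cite{HeiMie16}; everything downstream (compactness, limit passage, uniqueness) is then routine.
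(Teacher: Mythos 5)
The paper offers no proof of this statement: Theorem \ref{th:heida_mielke} is imported verbatim as a citation of \cite[Theorems 4.1 and 4.7]{HeiMie16}, and the author's own contribution (Theorem \ref{th:dep}) consists precisely in reducing the positive-\emph{semi}definite case to this black-box result. So there is nothing in the paper to compare your argument against line by line; what I can say is that your sketch is a faithful outline of the standard route that the cited reference follows — incremental minimization, a uniform one-step Lipschitz estimate closed by combining the uniform convexity of $\EE(t,\cdot)$ with \ref{cond:regular1}, \ref{cond:regular2}, $\Psi\leq c_\Psi\norm{\cdot}_X$ and $\dot\ell\in L^\infty$, Arzelà--Ascoli, limit passage, and the classical uniqueness computation $\tfrac{\dd}{\dd t}\scal{\A(x_1-x_2)}{x_1-x_2}\leq 0$, which is indeed insensitive to the time-dependence of $\RR$ because both inclusions are tested at the same instant. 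Your diagnosis that the equi-Lipschitz bound is the technical heart is also accurate.

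Two caveats. First, as written this is a proof \emph{plan}, not a proof: Step 2 asserts that the hypotheses ``combine to close this estimate uniformly in the mesh'' without carrying out the computation, and that computation (comparing the stability conditions at consecutive nodes and absorbing the $\RR$-increments, bounded via \ref{cond:regular2} and $\Psi\leq c_\Psi\norm{\cdot}_X$, into the quadratic gain from convexity) is exactly the nontrivial content of the cited theorems; Step 3's upgrade from the energetic to the pointwise formulation also deserves more than one clause. Second, for your convexity and coercivity claims in Step 1 and for the final uniqueness step to work in an infinite-dimensional $X$ (the paper applies the theorem with $X=W^{1,2}$ in mind for the shape space), ``positive-definite'' must be read quantitatively as $\scal{\A x}{x}\geq\alpha\norm{x}_X^2$ for some $\alpha>0$; mere strict positivity of the quadratic form would give neither coercivity of the incremental functionals nor the conclusion $x_1\equiv x_2$ from $\scal{\A(x_1-x_2)}{x_1-x_2}=0$. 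With that reading, the outline is sound.
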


We notice that the Theorem \ref{th:heida_mielke} considers only the case of a positive-definite matrix $\A$, and therefore cannot be applied directly to our case, characterized by invariance for certain translations. To be able to deal with such general situation, and recover existence and uniqueness of a solution for \eqref{eq:EvP_time}, we consider the following additional assumption on the time-dependent dissipation. \emph{
	\begin{enumerate}[label=\textup{($\diamond$)}]
		\item The functional $\RR(t,\cdot)\colon X\to \R$ satisfies \ref{cond:symmbreak} for almost every $t\in[0,T]$.
		\label{cond:symmbreak_time}
	\end{enumerate}
	\begin{enumerate}[label=\textup{($\diamond\diamond$)}]
		\item There exists a functional $\Psi\colon X\to [0,+\infty)$ convex, lower semi-continuous, positively homogeneous of degree one,  such that
		$\RR$ is $\Psi$-regular \label{cond:symmbreak_coerc} and
		\begin{itemize}
			\item  there exist two positive constants $c_1,c_2$ such that 
			$$ 
			\Psi\bigl(\chi(z,v) \bigr)\geq c_1 \norm{v} -c_2 \norm{z}_{Z}
			$$
			\item there exists a constant $c_\Psi>0$ such that $\Psi(x)\leq c_\Psi\norm{x}_X$ for every $x\in X$.
		\end{itemize}
\end{enumerate}}

	We notice that, by \ref{cond:symmbreak_coerc}, for every $z\in Z$ the restriction  $\Psi\bigl(\chi(z,\cdot) \bigr)\colon Y\to [0,+\infty)$ is coercive. Moreover, as a consequence of \ref{cond:regular1}, this fact implies also that the restrictions $\RR\bigl(t,\chi(w,\cdot) \bigr)\colon Y\to [0,+\infty)$ are coercive for every choice of $w\in Z$. Hence, it is possible to define a function $\vm\colon [0,T]\times Z\to Y$ such that
\begin{equation*}
\RR(t,\chi(w,v))-\RR(t,\chi(w,\vm(t,w)))\geq 0 \qquad \text{for every $(t,w,v)\in [0,T]\times Z\times Y$}
\end{equation*}
Furthermore, $\vm(t,\cdot)\colon Z\to Y$ is univocally determined for almost every $t\in[0,T]$.

We also observe that \ref{cond:regular1} and \ref{cond:symmbreak_coerc} determine an uniform bound on $\vm(t,w)$, depending only on $\norm{w}_Z$. 
Let us recall that, since $\chi$ is a linear and continuous operator, there exists a constant $c_\chi>0$ such that $\norm{\chi(w,0)}_X\leq  c_\chi \norm{w}_Z$. Thus we have the estimate
\begin{equation}\label{eq:shape_bound}
\RR(t,\chi(w,\vm(t,w)))\leq \RR(t,\chi(w,0))\leq \alpha^*c_\Psi\norm{\chi(w,0)} \leq \alpha^*c_\Psi c_\chi \norm{w}_Z
\end{equation}

Recalling that we are assuming on the energy $\EE(t,x)=\Esh(t,\sigma(x))$ the same properties discussed in Section \ref{sec:indep_theory}, we are now ready to state our main result.

\begin{theorem}\label{th:dep}
	In the framework above,	given $\ell \in W^{1,\infty}([0,T],X^*)$ and an admissible initial point $x_0$, assuming that  conditions \ref{cond:symmbreak_time} and \ref{cond:symmbreak_coerc} are satisfied, then there exists a unique solution $x\in\Clip([0,T],X)$, of the evolution problem \eqref{eq:EvP_time} that satisfies the initial condition $x(0)=x_0$.
\end{theorem}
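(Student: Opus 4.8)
The plan is to repeat the reduction used in the proof of Theorem~\ref{th:indep}: solve a shape-restricted evolution on $Z$, this time by means of Theorem~\ref{th:heida_mielke} in place of Theorem~\ref{th:mielke}, and then recover the net translation uniquely. The only genuinely new point is that the minimization over the translation component $v\in Y$ must be shown to preserve not merely the structural properties of the dissipation (as in Lemma~\ref{lem:propDsh}), but also its regularity in time, so that the reduced problem still fits the hypotheses of Theorem~\ref{th:heida_mielke}.

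First I would introduce
\[
\Psh(w)=\inf_{v\in Y}\Psi(\chi(w,v)),\qquad \Dsh(t,w)=\inf_{v\in Y}\RR(t,\chi(w,v)),
\]
and show that $\Dsh$ is a $\Psh$-regular dissipation potential on $Z$ with $\Psh(w)\le c_\Psi c_\chi\norm{w}_Z$. The coercivity of $\RR(t,\chi(w,\cdot))$ and of $\Psi(\chi(w,\cdot))$, which follows from \ref{cond:symmbreak_coerc} and \ref{cond:regular1}, lets me apply the argument of Lemma~\ref{lem:propDsh} to both $\RR(t,\cdot)$ and $\Psi$, giving convexity, lower semi-continuity and positive $1$-homogeneity of $\Psh$ and of each $\Dsh(t,\cdot)$, as well as attainment of the infima; continuity of $\Dsh(t,\cdot)$ then follows since $\Dsh(t,\cdot)\le\alpha^*\Psh(\cdot)\le\alpha^*c_\Psi c_\chi\norm{\cdot}_Z$ is locally bounded above. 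This yields \ref{cond:regular0}, and taking the infimum over $v$ in the two-sided bound \ref{cond:regular1} for $\RR$ gives $\alpha_*\Psh(w)\le\Dsh(t,w)\le\alpha^*\Psh(w)$, i.e.\ \ref{cond:regular1} for $\Dsh$ with the same constants.

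The main step is \ref{cond:regular2} for $\Dsh$, where a priori the $t$-dependence of the minimizer $\vm(t,\cdot)$ could spoil the Lipschitz-in-time estimate. I would argue by a competitor estimate: fixing $w$ and $s,t\in[0,T]$ and letting $v_s$ realize $\Dsh(s,w)=\RR(s,\chi(w,v_s))$, one has
\[
\Dsh(t,w)-\Dsh(s,w)\le\RR(t,\chi(w,v_s))-\RR(s,\chi(w,v_s))\le\Lambda\abs{t-s}\,\Psi(\chi(w,v_s)),
\]
while \ref{cond:regular1} gives $\Psi(\chi(w,v_s))\le\alpha_*^{-1}\RR(s,\chi(w,v_s))=\alpha_*^{-1}\Dsh(s,w)\le(\alpha^*/\alpha_*)\Psh(w)$; exchanging the roles of $s$ and $t$ yields the reverse inequality, so $\abs{\Dsh(t,w)-\Dsh(s,w)}\le\Lambda(\alpha^*/\alpha_*)\abs{t-s}\,\Psh(w)$, which is \ref{cond:regular2}. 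This is the part I expect to be the crux; the rest is a transcription of the time-independent argument.

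Finally I would conclude as in Theorem~\ref{th:indep}. Rewriting \eqref{eq:EvP_time} as a variational inequality and testing with displacements $u$ such that $\sigma(u)=\sigma(\dot x(t))$ shows that $\dot x(t)$ minimizes $\RR(t,\chi(\sigma(\dot x(t)),\cdot))$, hence $\dot y(t)=\vm(t,\dot z(t))$ for a.e.\ $t$ by \ref{cond:symmbreak_time}; projecting the variational inequality onto $Z$ and using $\EE(t,x)=\Esh(t,\sigma(x))$ shows $z(t)=\sigma(x(t))$ solves $0\in\partial_{\dot z}\Dsh(t,\dot z(t))+D_z\Esh(t,z(t))$, with admissibility $-D_z\Esh(0,z_0)\in\partial\Dsh(0,0)$ inherited from \eqref{eq:RISadmisgen_time} by the time-dependent version of Lemma~\ref{lemma:Csh_immersion} (namely $\hCsh(0)=\C(0)\cap\hat Z^*$, proved by the same computation). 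Since $\Ash$ is symmetric positive-definite, $\lsh\in W^{1,\infty}([0,T],Z^*)$, $\Dsh$ is $\Psh$-regular and $\Psh$ has at most linear growth, Theorem~\ref{th:heida_mielke} provides a unique Lipschitz solution $z$ of the shape-restricted problem. I would then set $y(t)=\pi(x_0)+\int_0^t\vm(s,\dot z(s))\,\dd s$, the integrand being measurable and, by \eqref{eq:shape_bound}, bounded on $[0,T]$, and check that $x=\chi(z,y)$ solves \eqref{eq:EvP_time} with $x(0)=x_0$, which gives existence. Conversely, every solution of \eqref{eq:EvP_time} projects under $\sigma$ to the unique shape solution $z$, and then $\dot y(t)=\vm(t,\dot z(t))$ a.e.\ forces $y$ to be the integral above, so the solution is unique.
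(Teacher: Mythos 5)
Your reduction follows the paper's route step by step: the same shape-restricted potential $\Dsh(t,w)=\inf_{v\in Y}\RR(t,\chi(w,v))$, the same verification that it is a $\Psh$-regular dissipation potential (your competitor estimate for \ref{cond:regular2}, testing $\RR(t,\cdot)$ at the minimizer $v_s$ for time $s$ and using \ref{cond:regular1} to convert $\Psi(\chi(w,v_s))$ into $(\alpha^*/\alpha_*)\Psh(w)$, is exactly the argument of Lemma~\ref{lem:propDsh_time}), and the same application of Theorem~\ref{th:heida_mielke} to obtain the unique Lipschitz shape evolution $\bar z$, followed by the recovery of $\bar y$ from \eqref{eq:vm_time}.

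The one place where you pass over a genuine difficulty is the final existence step, where you set $y(t)=\pi(x_0)+\int_0^t\vm(s,\dot z(s))\,\dd s$ and declare the integrand measurable. This is not automatic: $\vm(t,\cdot)$ is defined only as an argmin, it is single-valued only for almost every $t$ (by \ref{cond:symmbreak_time}), and you have established no joint regularity of $\vm$ in $(t,w)$, so the composition with the merely measurable $\dot{\bar z}$ requires a measurable-selection argument or a substitute for it. The paper identifies precisely this as the remaining nontrivial point and fills it by a direct method: it minimizes the integral functional $G[v]=\int_0^T\RR\left(t,\chi\left(\dot{\bar z}(t),v(t)\right)\right)\dd t$ over $L^1([0,T],\R^N)$, using convexity and weak sequential lower semicontinuity together with a truncation of minimizing sequences at a level $M_2$; the resulting minimizer $\bar v$ is an integrable (indeed essentially bounded) function that coincides a.e.\ with the pointwise argmin, and $\bar y$ is defined as its primitive. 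Note also that \eqref{eq:shape_bound} by itself bounds the dissipation $\RR(t,\chi(w,\vm(t,w)))$, not the velocity $\norm{\vm(t,w)}$; to bound the latter, and hence to get the Lipschitz continuity of $\bar y$, you must combine \eqref{eq:shape_bound} with the coercivity inequality $\Psi(\chi(z,v))\geq c_1\norm{v}-c_2\norm{z}_Z$ from \ref{cond:symmbreak_coerc}, as the paper does when deriving the constants $M_1$ and $M_2$. Apart from this missing existence/measurability argument, your proof coincides with the paper's.
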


\begin{proof} 
	
	We follow the same line of the proof of Theorem \ref{th:indep}. The evolution \eqref{eq:EvP_time} of our system is solution of the variational inequality
	\begin{equation} \label{eq:VI_dep}
	\scal{\A x(t)-\ell (t)}{u-\dot x(t)}+\RR(t,u)-\RR(t,\dot x(t))\geq 0 
	\end{equation}
	for every $u\in X$ and almost every $t\in[0,T]$.
	We consider the class of functions $u\in X$ such that $u=\chi(\sigma(\dot x(t)),v)$, so that, for almost every $t\in[0,T]$, we have
	\begin{equation}
	\RR\left(t,\chi\bigl(\sigma(\dot x(t)),v\bigr) \right)-\RR(t,\dot x(t))\geq 0 \qquad \text{for every $v\in Y$}
	\end{equation}
	As in the time-independent case, by condition \ref{cond:symmbreak_time} we know that, for every solution $x(t)=\chi(z(t),y(t))$ of \eqref{eq:VI_dep}, and for almost every time, we can express the derivative of the net translation $\dot y(t)$ as a function of that of the shape change $\dot z(t)$, namely
	\begin{equation} 
	\dot y(t)=\vm (t,\dot z(t)):=\argmin_{v\in Y}\ \RR(t,\chi(\dot z(t),v))
	\label{eq:vm_time}
	\end{equation}
We remark that, whereas \ref{cond:symmbreak_time} assures the existence of a \emph{unique} minimizer of $\RR\bigl(t,\chi(w,\cdot))\bigr)$  for \emph{almost every} time $t\in [0,T]$, the existence of \emph{at least one} minimizer for \emph{every} $t\in [0,T]$ is obtained by a combination of \ref{cond:regular2} and \ref{cond:symmbreak_coerc}, implying the coercivity of the restrictions $\RR\bigl(t,\chi(w,\cdot))\bigr)$. Therefore the function $\vm\colon[0,T]\times Z\to Y$ is univocally defined for almost every $t$. Moreover, it can be assumed to be positively homogeneous of degree one in~$z$.

 Thus it is sufficient to study the evolution of the shape $z(t)$, which we know to be a solution of the shape-restricted variational equation
	\begin{equation}
	\scal{\Ash z(t)-\lsh (t)}{w-\dot z(t)}+\Dsh (t,w)-\Dsh (t,\dot z(t))\geq 0 \quad \text{for every $w\in Z$} 
	\label{eq:RVI_time}
	\end{equation}
	where the  shape-restricted dissipation $\Dsh\colon[0,T]\times Z\to[0,+\infty) $ is defined as 
	\begin{equation}\label{xeq:Dsh_time}
	\Dsh (t,w):=\inf_{y\in Y} \RR(t,\chi(w,y))=\RR\bigl(t,\chi(w,\vm(w))\bigr)
	\end{equation}
	We observe that each  function $z\colon [0,T]\to Z$ is a solution of \eqref{eq:RVI_time} if and only if it satisfies 
	\begin{equation} \label{eq:SF_time}
	0\in \partial \Dsh (\dot z(t))+D_z\Esh(t,z(t)) 
	\end{equation}
	 for almost every $t\in[0,T]$.
	
	Our plan is to apply Theorem \ref{th:heida_mielke} to \eqref{eq:SF_time} and then recover the displacement $y(t)$ of the crawler through the relationship \eqref{eq:vm_time}. To do so, first of all we observe that the desired properties of the dissipation potential are preserved by the dimensional reduction:
	\begin{lemma} \label{lem:propDsh_time}
		The time-dependent shape-restricted dissipation functional $\Dsh$ defined in \eqref{xeq:Dsh_time} is $\Psh$-regular dissipation potential for a suitable $\Psh\colon Z\to [0,+\infty)$ , such that $\Psh(z)\leq c' \norm{z}_Z$ for a constant $c'>0$.
	\end{lemma} 
	We prove the lemma at the end of this proof. The admissibility condition for the initial point becomes, for the shape-restricted problem,
	\begin{equation} \label{eq:init_admiss_dep}
	-D_z\Esh(0,z_0)\in \partial\Dsh(0,0):=\Csh (0)
	\end{equation}
	Analogously to the time-independent case, \eqref{eq:init_admiss_dep} is automatically satisfied when \eqref{eq:RISadmisgen_time} is satisfied. Hence, Theorem \ref{th:heida_mielke} assures us the existence of an unique solution $z(t)$ of the evolution problem \eqref{eq:SF_time}.

	As a consequence, every solution $\bar x(t)=\chi (\bar z, \bar y)$ of \eqref{eq:EvP_time} is such that  $\bar z$ is the unique solution of \eqref{eq:SF_time}.
	Moreover, we know that if a solution exists, its net displacement $\bar y(t)$ is uniquely defined as
	\begin{equation}
	\bar y(t):=y_0 + \int_{0}^{t} \vm(t,\dot{\bar{z}}(s)) \dd s
	\end{equation}
	where $y_0=\pi (x_0)$.  This provides the uniqueness of the solution of \eqref{eq:EvP_time}. Yet, we still have to prove the existence of such a solution, since in this case we do not have already available a result for a positive-semidefinite energy. To do so, we show that the functional
	\begin{equation}
	G[v]:=\int_{0}^{T} g(t,v(t))\dd t \qquad \text{with $g(t,y)=\RR\left(t,\chi\left(\dot{\bar z}(t), y\right)\right)$}
	\end{equation}
	has a minimum point in $L^1([0,T],\R^N)$. We observe that $g(t,y)$ is always non-negative, measurable in $t$, continuous and convex in $y$. Thus the functional $G$ is weakly sequentially lower semicontinuous in $L^p([0,T],\R^N)$, for every $p\geq 1$ (cf.~for instance \cite[Th.~3.20]{dacorogna}). 
	
	Let us now recall that, by Theorem \ref{th:heida_mielke} the solution $\bar z(t)$ of \eqref{eq:SF_time} is Lipschitz continuous, and therefore there exists a constant $M>0$ such that $\norm{\dot{\bar z}(t)}_Z<M$ for almost every $t\in[0,T]$. Thus, by \eqref{eq:shape_bound}, we know that 
	\begin{equation}
	\RR(t,\chi(\dot{\bar z}(t),0))\leq  \alpha^*c_\Psi c_\chi M=:M_1
	\end{equation}
	On the other hand, combining \ref{cond:regular1} and \ref{cond:symmbreak_coerc} we get the estimate
	\begin{equation}
	\RR(t,\chi(\dot{\bar z}(t),v))\geq  \alpha_*\Psi(\chi(\dot{\bar z}(t),v))
	\geq \alpha_*c_1\norm{v}-\alpha_*c_2M
	\end{equation}
	Hence
	\begin{equation}
	\RR(t,\chi(\dot{\bar z}(t),v))\geq  M_1 \qquad \text{for $\abs v\geq M_2:=\frac{M_1+\alpha_*c_2M}{\alpha_*c_1}$}
	\end{equation}
	
	We now consider a minimizing sequence $v_j\in L^1([0,T],\R^N)$ for $G[v]$, meaning that
	$$
	\lim_{j\to \infty} G[v_j]=\inf_{v\in L^1([0,T],\R^N)}G[v] 
	$$
	We construct a second sequence $\hat v_j\in L^1([0,T],\R^N)\cap L^\infty([0,T],\R^N)$ defined as
	\begin{equation}
	\hat{v}_j(t)=\begin{cases}
	v_j(t) &\text{if $\abs{v_j(t)}\leq M_2$}\\
	0 &\text{if $\abs{v_j(t)}>M_2$}\\
	\end{cases}
	\end{equation}
	We observe that $\norm{\hat v_j}_{L^\infty([0,T],\R^N)}\leq M_2$ and $G[\hat v_j]\leq G[v_j]$. Hence also $\hat v_j$ is a minimizing sequence for $G$.
	
	Moreover, we notice that $\norm{\hat v_j}_{L^2([0,T],\R^N)}\leq TM_2^2$. Since $L^2$ is reflexive, we deduce the existence of a subsequence $\hat v_{j_k}$ converging to $\bar v\in L^2([0,T],\R^N)\subset  L^1([0,T],\R^N)$ in the weak topology of $L^2$. Moreover, $\norm{\bar v}_{L^\infty([0,T],\R^N)}\leq M_2$.
	
	On the other hand, we have shown that $G$ is  weakly sequentially lower semicontinuous in $L^2([0,T],\R^N)$, thus 
	$$
	0\leq G[\bar v] \leq \lim_{j\to \infty} G[\hat v_{j_k}]=\inf_{v\in L^1([0,T],\R^N)}G[v] 
	$$
	Hence $\bar v$ is a minimum point for $G$ and, therefore, the net displacement
	\begin{equation}
	\bar y(t):=y_0 + \int_{0}^{t} \bar v(s) \dd s
	\end{equation}
	is well defined. We have thus obtained the existence of a (unique) solution $\chi(\bar x,\bar y)$ of \eqref{eq:EvP_time}. 
	
	To conclude, we notice that $\bar y$ is Lipschitz continuous with Lipschitz constant $M_2$. Since $\bar z$ is also Lipschitz continuous and $\chi$ is a linear, continuous operator,  we obtain that also the unique solution $\bar x$ of \eqref{eq:EvP_time} is Lipschitz continuous.
\end{proof}

We observe that the characterization of the evolution problem through stasis domains still holds, as well as the properties of Lemmas \ref{lemma:Csh_immersion} and \ref{lemma:SB_normalcone}, and the relationship with the locomotion capability discussed in Remarks \ref{rem:SD} and \ref{rem:SDdirection}. Yet, now the stasis domain changes in time, and with it the locomotion properties of the crawler.
Similarly, the sweeping process interpretation of Section~\ref{sec:sweeping} remains valid, but now the domain of the process will also change shape, in addition to the translations already discussed in the case of time-independent friction.

\begin{proof}[Proof of Lemma \ref{lem:propDsh_time}]
	We show that $\Dsh\colon[0,T]\times Z\to [0,+\infty)$ satisfies the three conditions of Definition \ref{def:Psi_regular}.	
	Condition \ref{cond:regular0} follows from Lemma \ref{lem:propDsh},
	applied to each $\RR(t,\cdot)$; the part of the proof regarding lower semi-continuity can be easily adapt to apply also to the preservation of continuity.  
	To satisfy the other two conditions, let us first introduce the functional $\Psh\colon Z\to [0,+\infty)$ as
	\begin{equation}
	\Psh(w)	=\inf_{v\in Y} \Psi (\chi(w,v))
	\end{equation}
	By \ref{cond:symmbreak_coerc}, we can apply Lemma \ref{lem:propDsh} to $\Psi$ and obtain that $\Psh$ is convex, lower semi-continuous and positive homogeneous of degree one. Moreover, there exists a function $v_\Psi\colon Z\to Y$ positively homogeneous of degree one such that
	$$
	\Psh(w)	=\Psi (\chi(w,v_\Psi(w)))
	$$ 
	Since \ref{cond:regular1} holds for $\RR$ with respect to $\Psi$, we have that, for every $(t,w)\in[0,T]\times Z$,
	\begin{gather*}
	\alpha_* \Psh (w)
	= \alpha_* \Psi \bigl(\chi (w,v_\Psi(w))\bigr)
	\leq \alpha_* \Psi \bigl(\chi (w,\vm(t,w))\bigr)
	\leq\Dsh (t,w)\\
	\Dsh (t,w)\leq \RR \bigl(t,\chi (w,v_\Psi(t,w))\bigr)
	\leq \alpha^* \Psi \bigl(\chi (w,v_\Psi(t,w))\bigr)
	=\alpha^* \Psh (w)
	\end{gather*}
	and thus \ref{cond:regular1} holds for $\Dsh$ with respect to $\Psh$ with the same constants $\alpha^*$ and $\alpha_*$ .
	
	We now show that also \ref{cond:regular2} holds for $\Dsh$ and $\Psh$, with constant $\Lambda_\mathrm{sh}(\tau)=\frac{\alpha^*}{\alpha_*} \Lambda $. Let us fix any $w\in Z$, $t,s\in [0,T]$ and assume, without loss of generality, that $\Dsh(t,w)\geq \Dsh(s,w)$. This implies that 
	$$\RR\bigl(s, \chi(w,\vm(s,w)) \bigr)=\Dsh(s,w)\leq \Dsh(t,w) \leq \RR\bigl(t, \chi(w,\vm(s,w)) \bigr)$$
	By \ref{cond:regular2} for $\RR$, this implies that
	\begin{align*}
	\abs{\Dsh(t,w) -\Dsh(s,w)} &
	\leq \abs{\RR\bigl(t, \chi(w,\vm(s,w)) \bigr)-\RR\bigl(s, \chi(w,\vm(s,w)) \bigr)}\\
	&\leq \Lambda\abs{t-s}\Psi (\chi(w,\vm(s,w))
	\end{align*}
	We also observe that, by combining the estimates \ref{cond:regular1} for $\RR$ and $\Dsh$, we get
	\begin{equation*}
	\Psi (\chi(w,\vm(s,w))\leq \frac{1}{\alpha_*}\RR(s,\chi(w,\vm(s,w))
	\leq\frac{\alpha^*}{\alpha_*}\Psh (w)
	\end{equation*}
	that, combined with the inequality above, gives us
	\begin{equation}
	\abs{\Dsh(t,w) -\Dsh(s,w)}
	\leq \Lambda\abs{t-s}\frac{\alpha^*}{\alpha_*}\Psh (w)=\Lambda_\mathrm{sh}\abs{t-s}\Psh (w)
	\end{equation}
	that corresponds to \ref{cond:regular2} for $\Dsh$ with respect to the functional~$\Psh$.
	
	To conclude, let us recall that, since $\chi$ is a linear and continuous operator, there exists a constant $c_\chi>0$ such that $\norm{\chi(w,0)}_X\leq  c_\chi \norm{w}_Z$. Since by \ref{cond:symmbreak_coerc} we have
	$$
	\Psh(w)\leq \Psi(\chi(w,0))\leq  c_\Psi\norm{\chi(w,0)}_X\leq  c_\Psi c_\chi \norm{w}_Z
	$$
	that gives the desired estimate for $c'=c_\Psi c_\chi$.

\end{proof}


\section{Time-dependent friction: examples} \label{sec:dep_examples}

As we explained in the introduction, a time-dependence in the friction coefficients is an alternative way to produce a sufficient amount of anisotropy  to achieve locomotion. With respect to systems with constant anisotropic friction, time-dependent friction does not fix a priori a directionality in the crawler; its effect has to be evaluated jointly with the activation on shape.  Yet, it is usually employed in the same way as directional friction: to produce a strong resistive force against thrusts in the undesired direction, and to facilitate sliding in the other one. Indeed, the use of anchoring in stick-slip locomotion can be seen as a strong form of time-dependent friction.

Moreover, time-dependent friction considerably increases the complexity of the systems, improving the locomotion capability of the crawler. This is exemplified by a very common crawling strategy: \emph{two-anchor crawling}. Two-anchor crawlers can generally be described as a segment of variable length with time-dependent friction at its ends. Their locomotion strategy is composed of two phases, as follows. In the first phase the backward end stick to the surface and the body expands, making the other end slide forward; in the second phase it is the forward end to stick to the surface, so that the backward end is pulled forward by a contraction of the body. Examples of such locomotors are the caterpillar \cite{GriTri14}, the leech \cite{SNPK86}, and the robotic crawlers proposed in \cite{scalybot,Onal2013,Ume16c,Vikas16}.

\medbreak

In order to keep our analysis from being too burdensome, we proceed as follows. First, we give some general results about the meaning of assumptions \ref{cond:symmbreak_time} and \ref{cond:symmbreak_coerc} for discrete e continuous models for crawler. Then, we focus on a minimal discrete model with different strategies of friction manipulation, to illustrate with more detail the situation in a variation of two-anchor crawling.

\subsection{Characterization of discrete and continuous models}

Condition \ref{cond:symmbreak_time} requires that \ref{cond:symmbreak} is satisfied at almost every time; hence its meaning has already been explained by the analysis given in Section \ref{sec:indep_examples}, in particular by Lemmas \ref{lemma:SB_normalcone}, \ref{lemma:SBdiscr}, \ref{lemma:SBcont} and \ref{lemma:SBdiscr_asym}. We observe that in general \ref{cond:symmbreak} is false only for some critical choices of the friction coefficient. Thus, we can expect \ref{cond:symmbreak_time} to fail only either if the friction coefficient are constant in a time-interval assuming exactly one of those critical values, or if they present a specific \emph{ad hoc} time evolution along the submanifold of critical values.

Regarding condition \ref{cond:symmbreak_coerc}, we have the following two results, respectively for discrete and continuous models.

\begin{lemma} \label{lemma:SB2t_discr}
	Let us consider a one-dimensional discrete crawler with body $\Omega_N=\{\xi_1,\dots,\xi_N\}\subset \R$ and state $x\in X=\R^N$, where each component $x_i$ corresponds to the displacement of $\xi_i$, and such that each point $\xi$ is subject to the dry friction force:
	\begin{equation*}
	F_i(t)=F(\dot{x}_i(t)) \qquad \text{ where }\; F(v) \in
	\begin{cases}
	\displaystyle \{\mu^i_-(t)\}   &  \text{if $v<0$} \\
	\displaystyle [-\mu^i_+(t),\mu^i_-(t)]   &   \text{if $v=0$} \\
	\displaystyle \{-\mu^i_+(t)\}   &  \text{if $v>0$}
	\end{cases}
	\end{equation*}
	Let us assume that the functions $\mu^i_\pm(t)\colon [0,T]\to \R$, for $i=1,\dots, N$, are Lipschitz continuous with Lipschitz constant $L$ and that there exists two positive constants $a_1,a_2$ such that $a_1\leq \mu^i_\pm(t)\leq a_2$ for every $i=1,\dots, N$  and $t\in[0,T]$.
	Then condition \ref{cond:symmbreak_coerc} is satisfied.
\end{lemma}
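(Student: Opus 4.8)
The plan is to produce a single functional $\Psi$ that does the job for all the time-dependent friction laws at once, namely the $\ell^1$-norm on $X=\R^N$,
$$
\Psi(x)=\sum_{i=1}^N\abs{x_i},
$$
and then to check, condition by condition, that the dissipation potential $\RR(t,x)=\sum_{i=1}^N\RR_i(t,x_i)$ — with $\RR_i(t,u_i)=\mu^i_+(t)u_i^+-\mu^i_-(t)u_i^-$, i.e.\ $\RR_i(t,u_i)=\mu^i_\pm(t)\abs{u_i}$ with the coefficient chosen according to the sign of $u_i$ — is $\Psi$-regular in the sense of Definition \ref{def:Psi_regular} and that the two growth estimates of \ref{cond:symmbreak_coerc} hold. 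Since $X$ is finite-dimensional and $\Psi$ is a norm, $\Psi$ is automatically convex, lower semi-continuous and positively homogeneous of degree one, so the whole content lies in the interplay with the bounds $a_1\le\mu^i_\pm(t)\le a_2$ and the Lipschitz hypothesis.

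First I would verify the three conditions of Definition \ref{def:Psi_regular}. Each $\RR_i(t,\cdot)$ is piecewise linear with slopes $-\mu^i_-(t)\le 0\le\mu^i_+(t)$, hence convex, continuous and positively homogeneous of degree one; these properties pass to the finite sum $\RR(t,\cdot)$, giving \ref{cond:regular0}. Using $a_1\le\mu^i_\pm(t)\le a_2$ pointwise one gets $a_1\abs{u_i}\le\RR_i(t,u_i)\le a_2\abs{u_i}$, and summing over $i$ yields $a_1\Psi(u)\le\RR(t,u)\le a_2\Psi(u)$, which is \ref{cond:regular1} with $\alpha_*=a_1$ and $\alpha^*=a_2$. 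For \ref{cond:regular2} observe that, for a fixed $u_i$, the coefficient entering $\RR_i(t,u_i)$ (namely $\mu^i_+$ if $u_i\ge0$ and $\mu^i_-$ if $u_i\le0$, consistently at $u_i=0$) is the \emph{same} at the two times $t$ and $s$; hence $\abs{\RR_i(t,u_i)-\RR_i(s,u_i)}\le L\abs{t-s}\abs{u_i}$, and summing gives $\abs{\RR(t,u)-\RR(s,u)}\le L\abs{t-s}\Psi(u)$, i.e.\ \ref{cond:regular2} with $\Lambda=L$. Thus $\RR$ is $\Psi$-regular.

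Next I would dispose of the two bullet points in \ref{cond:symmbreak_coerc}. The upper estimate $\Psi(x)\le c_\Psi\norm{x}_X$ is simply the equivalence of norms on $\R^N$ (for the Euclidean norm one may take $c_\Psi=\sqrt N$). For the lower, coercivity-type bound I would use that $\chi$ is linear and that a pure translation does not change the shape and shifts the barycentre by its own amount, so that $\chi(0,v)=v\mathds{1}_N$ and therefore $\chi(z,v)=v\mathds{1}_N+\chi(z,0)$. Since $\Psi$ is a norm, the reverse triangle inequality gives
$$
\Psi\bigl(\chi(z,v)\bigr)=\Psi\bigl(v\mathds{1}_N+\chi(z,0)\bigr)\ge\Psi(v\mathds{1}_N)-\Psi\bigl(\chi(z,0)\bigr)=N\abs{v}-\Psi\bigl(\chi(z,0)\bigr)\ge N\abs{v}-c_2\norm{z}_Z,
$$
where $c_2=\sup_{\norm{z}_Z\le1}\Psi(\chi(z,0))$ is finite because $z\mapsto\chi(z,0)$ is a linear map between finite-dimensional spaces. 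This is exactly the first bullet with $c_1=N$, so all the hypotheses of \ref{cond:symmbreak_coerc} are met with this choice of $\Psi$.

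I do not expect any genuine obstacle: the argument is essentially bookkeeping resting on finite-dimensionality together with the uniform positivity, boundedness and Lipschitz continuity of the coefficients $\mu^i_\pm$. The only point that needs a moment of care is the coercivity-type lower bound, where one must split off the translational component $v\mathds{1}_N$ of $\chi(z,v)$ before applying the reverse triangle inequality; everything else is immediate.
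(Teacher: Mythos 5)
Your proof is correct and follows essentially the same route as the paper: same choice $\Psi=\norm{\cdot}_1$, same constants $\alpha_*=a_1$, $\alpha^*=a_2$, $\Lambda=L$, and the same $c_\Psi=\sqrt N$. The only cosmetic difference is in the coercivity bound, where you split off the translational component $v\mathds{1}_N$ and apply the reverse triangle inequality abstractly, while the paper derives the same inequality (with the explicit constant $c_2=N(N-1)/2$) by a componentwise estimate $\abs{x_i}\ge\abs{y}-\tfrac{N-1}{2}\norm{z}_\infty$; both yield $c_1=N$ and are equally valid.
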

\begin{proof}
	The dissipation functional $\RR\colon [0,T]\times \R^N\to \R$ is defined as
	\begin{equation*}
	\RR(t,u)=\sum_{i=1}^N \RR_i (t,u_i) \qquad\text{with}\, \RR_i(t,u_i)=\begin{cases}
	-\mu_-^i(t)\,u_i  &  \text{if $u_i\leq 0$} \\
	\mu_+^i(t)\,u_i  &  \text{if $u_i\geq 0$}
	\end{cases}
	\end{equation*}
	and we observe that, by construction, at each time $t\in [0,T]$  the functional $\RR(t,\cdot)$ is convex, continuous and positive homogeneous of degree one.
	Let us now set
	$$
	\Psi(x)=\norm{x}_1  \qquad \text{where $\norm{x}_1=\sum_{i=1}^N \abs{x_i} $}
	$$
	We have $a_1 \Psi(x)\leq \RR(t,x)\leq a_2 \Psi(x)$ for every $t\in [0,T]$. Moreover, for every $t,s\in [0,T]$ and $x\in X$, we have
	$$
	\abs{\RR(t,x)-\RR(s,x)}\leq L\abs{t-s}\Psi (x) 
	$$
	and thus $\RR$ is a $\Psi$-regular potential.
	Moreover the functional $\Psi$ is coercive, so also its restrictions to the affine subspaces $\chi(z,\R)$, with $z\in Z$, are coercive. 
To prove the required estimate, let us notice that, for every $i=1,\dots,N$, we have
$$
\abs{x_i}\geq \abs{y}- \frac{N-1}{2}\norm{z}_\infty\geq \abs{y}- \frac{N-1}{2}\norm{z}_2
$$
Hence 
$$
\Psi(\chi(z,v))=\norm{x}_1\geq N\abs{y}- \frac{N(N-1)}{2}\norm{z}_2
$$
	Finally, recalling that we are using the Euclidean norm as norm  on $X$, we have the well known inequality $\Psi(x)=\norm{x}_1\leq \sqrt{N}\norm{x}_2$.
\end{proof}

\begin{lemma}\label{lemma:SB2t_cont}
	Let us consider a continuous crawler with bounded body $\Omega=[\xi_a,\xi_b]\subset \R$ and state $x\in X=W^{1,2}(\Omega,\R)$, where $x(\xi)$ is the displacement of $\xi$, and such that each point $\xi$ is subject to the dry friction force $f_{t,\xi}$ per unit reference length:
	\begin{equation*}
	f_{t,\xi}=f(t,\xi,\dot{x}(t,\xi))\qquad \text{ where }f(t,\xi,v)\in
	\begin{cases}
	\displaystyle \{\mud_{-}(t,\xi)\}  & \text{ if $v<0$ }  \\
	\displaystyle [-\mud_{+}(t,\xi),\mud_{-}(t,\xi)]  &  \text{ if $v=0$}  \\
	\displaystyle \{-\mud_{+}(t,\xi)\}  & \text{ if $v>0$ } 
	\end{cases}
	\end{equation*}
	Let us assume that there exists two positive constants $a_1,a_2$ such that $a_1\leq \mud_\pm(t,\xi)\leq a_2$ for every $i=1,\dots, N$  and $(t,\xi)\in[0,T]\times \Omega $.
	Moreover, we assume that $\mud_\pm(t,\cdot)\in L^2(\Omega,\R)$ for every $t\in[0,T]$, and that the following Lipschitz inequalities hold
	$$
	\norm{\mud_\pm(t,\cdot)-\mud_\pm(s,\cdot)}_{L^\infty(\Omega,\R)}\leq L\abs{t-s}
	$$	
	Then condition \ref{cond:symmbreak_coerc} is satisfied.
\end{lemma}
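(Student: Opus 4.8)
The plan is to mirror the proof of Lemma~\ref{lemma:SB2t_discr} almost verbatim, replacing the $\ell^1$-norm on $\R^N$ by the $L^1$-norm on $\Omega$. Concretely, I would take
$$
\Psi(x)=\norm{x}_{L^1(\Omega)}=\int_\Omega\abs{x(\xi)}\dd\xi .
$$
This functional is convex and positively homogeneous of degree one; moreover, since $\Omega$ is a bounded interval the embedding $W^{1,2}(\Omega,\R)\hookrightarrow L^1(\Omega,\R)$ is continuous, so $\Psi$ is finite-valued and continuous (in particular lower semi-continuous) on $X=W^{1,2}(\Omega,\R)$. It then remains to verify that $\RR$ is $\Psi$-regular and that the two estimates in \ref{cond:symmbreak_coerc} hold for this $\Psi$.

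For $\Psi$-regularity I would write $\RR(t,x)=\int_\Omega g_{t,\xi}(x(\xi))\dd\xi$ with $g_{t,\xi}(v)=-\mud_-(t,\xi)\,v^-+\mud_+(t,\xi)\,v^+=\max\{\mud_+(t,\xi)\,v,\,-\mud_-(t,\xi)\,v\}$. Since $\mud_\pm\ge 0$, the pointwise integrand is convex in $v$, hence $\RR(t,\cdot)$ is convex; positive homogeneity of degree one is immediate, and the bound $\abs{\RR(t,x)}\le a_2\norm{x}_{L^1}\le a_2\abs{\Omega}^{1/2}\norm{x}_{W^{1,2}}$ (the $L^\infty$-bound on the coefficients also guaranteeing that $\RR$ is finite-valued) together with convexity yields continuity, so \ref{cond:regular0} holds. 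From $a_1\abs{v}\le g_{t,\xi}(v)\le a_2\abs{v}$ one integrates to get $a_1\Psi(x)\le\RR(t,x)\le a_2\Psi(x)$, i.e.\ \ref{cond:regular1} with $\alpha_*=a_1$ and $\alpha^*=a_2$. For \ref{cond:regular2} I would use the pointwise estimate $\abs{g_{t,\xi}(v)-g_{s,\xi}(v)}\le\bigl(\abs{\mud_-(t,\xi)-\mud_-(s,\xi)}+\abs{\mud_+(t,\xi)-\mud_+(s,\xi)}\bigr)\abs{v}$, so that the Lipschitz-in-$L^\infty$ hypothesis on $\mud_\pm$ gives, after integration, $\abs{\RR(t,x)-\RR(s,x)}\le L\abs{t-s}\Psi(x)$, whence $\Lambda=L$.

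It remains to check the two additional bounds of \ref{cond:symmbreak_coerc}. Recalling that $\chi(z,v)$ is the function $\xi\mapsto z(\xi)+v$ with $z\in Z$ and $v\in Y=\R$, the triangle inequality gives
$$
\Psi\bigl(\chi(z,v)\bigr)=\int_\Omega\abs{z(\xi)+v}\dd\xi\ge\int_\Omega\bigl(\abs{v}-\abs{z(\xi)}\bigr)\dd\xi=\abs{\Omega}\,\abs{v}-\norm{z}_{L^1}\ge\abs{\Omega}\,\abs{v}-\abs{\Omega}^{1/2}\norm{z}_{L^2},
$$
and since $\norm{z}_{L^2}\le c\,\norm{z}_Z$ (continuity of the embedding $Z\hookrightarrow L^2(\Omega)$, or Poincaré--Wirtinger if $Z$ carries the seminorm $\norm{z'}_{L^2}$) this is the first required inequality, with $c_1=\abs{\Omega}$ and $c_2=c\,\abs{\Omega}^{1/2}$. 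The second follows from $\Psi(x)=\norm{x}_{L^1}\le\abs{\Omega}^{1/2}\norm{x}_{L^2}\le\abs{\Omega}^{1/2}\norm{x}_{W^{1,2}}$, so $c_\Psi=\abs{\Omega}^{1/2}$, which establishes \ref{cond:symmbreak_coerc}. I do not expect any genuine obstacle: this is essentially a transcription of the discrete argument, and the only points requiring a little care are the continuity of $\RR(t,\cdot)$ on $W^{1,2}(\Omega)$ — handled by the bounded-interval embedding $W^{1,2}\hookrightarrow L^1$ — and the comparison between $\norm{z}_{L^1}$ and $\norm{z}_Z$.
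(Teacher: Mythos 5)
Your proposal is correct and follows essentially the same route as the paper: the same choice $\Psi(x)=\norm{x}_{L^1(\Omega,\R)}$, the same verification of $\Psi$-regularity via the $L^\infty$-Lipschitz bound on $\mud_\pm$, and the same triangle-inequality coercivity estimate $\Psi(\chi(z,v))\geq\abs{\Omega}\abs{v}-\abs{\Omega}^{1/2}\norm{z}_{L^2}$ together with the embedding $W^{1,2}(\Omega)\hookrightarrow L^1(\Omega)$. The only (immaterial) difference is that your pointwise bound for \ref{cond:regular2} sums the two coefficient increments and thus yields $\Lambda=2L$ rather than $L$; the paper gets $L$ by noting that at each point only one of $u^{\pm}$ is nonzero, but either constant suffices.
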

We observe that the lemma is satisfied, for instance, by travelling waves
$$
\mud_\pm(t,\xi)=\mu_0(\xi-ct)
$$
where $\mu_0\colon \R\to (0,+\infty)$ is a positive Lipschitz continuous function and $c$ is a constant.

\begin{proof}
	The dissipation functional $\RR\colon [0,T]\times W^{1,2}(\Omega,\R)\to \R$ is defined as
	\begin{equation*} 
	\RR(t,u)=\int_{\Omega}\left[ -\mud_-(t,\xi)  u^-(\xi)+\mud_+(t,\xi)  u^+(\xi) \right]\dd \xi
	\end{equation*}
	that satisfies \ref{cond:regular0}, as shown in Section \ref{sec:hybrid}. Let us set the functional $\Psi\colon W^{1,2}(\Omega,\R)\to \R$ as
	$$
	\Psi(x)=\norm{x}_{L^1(\Omega,\R)}
	$$
	Since $\Omega$ has finite measure, precisely $\meas(\Omega)=\abs{\xi_b-\xi_a}$, the functional $\Psi$ satisfies
	$$
	\Psi(x)\leq \abs{\xi_b-\xi_a}^{\frac{1}{2}} \norm{x}_{L^{2}(\Omega,\R)}\leq \abs{\xi_b-\xi_a}^{\frac{1}{2}} \norm{x}_{W^{1,2}(\Omega,\R)}
	$$
	From this we  deduce that $\Psi$ is continuous. Since $\Psi$
	defines an alternative norm on $X$, it is also convex and positively homogeneous of degree one. 
	We have $a_1 \Psi(u)\leq \RR(t,u)\leq a_2 \Psi(u)$ for every $t\in [0,T]$. Moreover, for every $t,s\in [0,T]$ and $u\in X$, we have
	\begin{align*}
	\abs{\RR(t,u)-\RR(s,u)}&
	\leq \int_{\Omega} \left[\abs{\mud_-(t,\xi)-\mud_-(s,\xi)}  u^-(\xi)+\abs{\mud_+(t,\xi)-\mud_+(t,\xi)}  u^+(\xi) \right] \dd \xi \\
	&\leq \norm{\mud_-(t,\cdot)-\mud_-(s,\cdot)}_{L^\infty(\Omega,\R)}\norm{u^-}_{L^1(\Omega,\R)} + \\
	&\qquad \qquad +
	\norm{\mud_+(t,\cdot)-\mud_+(s,\cdot)}_{L^\infty(\Omega,\R)}\norm{u^+}_{L^1(\Omega,\R)}\\
	&\leq L\abs{t-s}\left[\norm{u^-}_{L^1(\Omega,\R)}+\norm{u^+}_{L^1(\Omega,\R)}\right]=L\abs{t-s}\Psi(u)
	\end{align*}
	and thus $\RR$ is a $\Psi$-regular potential. 
	
	To check the coercivity of the restrictions of $\Psi$ on the affine subspaces $\chi(z,\R)$, we first observe that, compared to the discrete case, $\Psi$ is no longer coercive, since the topology of the $W^{1,2}$-norm we have on $X$ is stronger that the one induced by  $\Psi$.
	Yet, the desired coercivity on the restrictions still holds, since, for $z\in Z=W_0^{1,2}(\Omega,\R)$ and $v\in Y=R$ we have
	$$
	\Psi(\chi(z,v))=\norm{z+v}_{L^1(\Omega,\R)}\geq \norm{v}_{L^1(\Omega,\R)}-\norm{z}_{L^1(\Omega,\R)}\geq \abs{\xi_b-\xi_a}\abs{v}-\abs{\xi_b-\xi_a}^{\frac{1}{2}} \norm{z}_{W^{1,2}(\Omega,\R)}
	$$
	We have therefore verified that \ref{cond:symmbreak_coerc} holds.
\end{proof}

\subsection{Motility in a one-segment discrete crawler}
\label{sec:inching}

To illustrate the potentialities and effects of introducing time-dependence in friction, we analyse now how this change influences the motility of the crawler of Example \ref{ex:2legs}, whose body is composed by only two point elements, joined by an active elastic link. Such model can be considered as a general two-anchor locomotor, for which we consider different choices of friction control.

To simplify the model, and to show that time-dependence in friction can be used alone to break the symmetry of the system , we assume that in each of the elements the friction is isotropic, with a coefficient $\mu_i(t)$ changing in time.

Therefore, for $(t,u)\in [0,T]\times \R^2$, the dissipation potential is
\begin{equation}
\RR(t,u)=\mu_1(t)\abs{u_1} +\mu_2(t)\abs{u_2}
\end{equation}
while the internal energy remains is the same of Example \ref{ex:2legs}, namely $\EE(t,x)=\frac{k}{2}(x_2-x_1-L(t))^2$.
We recall that we are adopting as shape coordinate $z=x_2-x_1$, and as net translation $y=\frac{x_1+x_2}{2}$.
We know that \ref{cond:symmbreak} is satisfied whenever $\mu_1(t)\neq \mu_2(t)$. Assuming that $\mu_1(t)$ and $\mu_2(t)$ are positive and Lipschitz continuous, thus satisfying Lemma \ref{lemma:SB2t_cont}, we can state our evolution problem in its shape-restricted form. The shape-restricted dissipation potential is
\begin{equation}
\Dsh (t,w)=\mum(t) \abs{w}\qquad \text{with $\mum(t)=\min\{\mu_1(t),\mu_2(t)\}$ }
\end{equation}
and, when \ref{cond:symmbreak} is satisfied, we can recover its associated minimizers $\vm(t,w)\in Y$ as
\begin{equation}
\vm(t,w)=\begin{cases} \displaystyle\frac{w}{2} &\text{if $\mu_1(t)>\mu_2(t)$}\\[4mm]
-\displaystyle\frac{w}{2} &\text{if $\mu_1(t)< \mu_2(t)$}\\
\end{cases}
\end{equation}
We observe that the stasis domains  change in time, namely
\begin{gather}
\C(t)=\partial \RR (t,0)=[-\mu_1(t),\mu_1(t)]\times [-\mu_2(t),\mu_2(t)]\subset X^* \\
\Csh (t)=\partial \Dsh (t,0)= [-\mum(t),\mum(t)] \subset Z^*
\end{gather}
The shape restricted evolution problem is therefore
\begin{equation}
\ten(t):=-k (z(t)-L(t))\in \partial\Dsh (\dot z(t))=\begin{cases} \{-\mum(t)\} &\text{if $\dot{z}(t)<0$}\\
[-\mum(t),\mum(t)] &\text{if $\dot{z}(t)=0$}\\
\{\mum(t)\} &\text{if $\dot{z}(t)>0$}
\end{cases} 
\end{equation}
From this we deduce that an elongation ($\dot z(t)>0$) is possible only if $k\dot L(t)-\dot\mu_\mathrm{min}(t)>0$ and $\ten(t)=\mum(t)$. The last condition means that the crawler is in a state of maximum compression; the first condition can be satisfied by an increment of the rest length, or by a reduction of the lower of the friction coefficients, or by both. We notice that we can still have an elongation even if we are reducing the rest length, provided that we reduce the friction coefficients enough. Similar conclusions can be obtained with respect to contractions ($\dot z(t)<0$).

It is not difficult to produce motility strategies, i.e.\ periodic changes in load and friction, producing a non-zero net displacement in each cycle. An easy approach could be to act at separate times on load and friction, in such a way that the final evolution is equivalent to a collage of time intervals where the systems evolves with a time-independent friction. Yet, it is not uncommon that changes in shape and friction are generated by the same mechanism; for instance, longitudinal muscular contractions in the earthworm stiffen the set\ae\ and  shorten the body segment. Since such situations are also those in which a theory with time-dependent friction is actually needed, we present here three illustrative examples, showing how control on friction can largely increase the locomotion capabilities of a crawler.

All our examples share the same input on shape, associated to a periodic elongation and contraction between $0$ and $\Lmax>0$:
\begin{equation}\label{eq:shape_act}
L(t)=\begin{cases} 2t\Lmax  &\text{for $0\leq t\leq \frac{1}{2}$}\\
2(1-t)\Lmax  &\text{for $\frac{1}{2}\leq t\leq 1$}
\end{cases}
\end{equation}

\begin{figure}[t]
	\begin{center}
		\begin{tikzpicture}[line cap=round,line join=round,>=stealth,x=0.5cm,y=0.4cm]
		\clip(-10.,1.) rectangle (21.,31.);\footnotesize
		\draw [->] (0.,24.) -- (18.,24.)node[right]{$t$};
		\draw [line width=1.pt,orange] (0.,25.)-- (8.,29.)-- (16.,25.) node[above] {$L(t)$};
		\draw [->] (0.,24.) -- (0.,30.);
		\draw [dashed] (0.,25.) node[left]{$0$} -- (18.,25.);
		\draw [dashed] (0.,29.) node[left]{$\Lmax$} -- (18.,29.);
		\draw [->] (0.,17.) -- (0.,23.);
		\draw [->] (0.,17.) -- (18.,17.)node[right]{$t$};
		\draw [->,] (0.,10.) -- (0.,16.);
		\draw [->] (0.,10.) -- (18.,10.)node[right]{$t$};
		\draw [dashed] (0.,22.) node[left]{$1.5\mu$} -- (18.,22.);
		\draw [dashed] (0.,20.) node[left]{$\mu$} -- (18.,20.);
		\draw [dashed] (18.,18.) -- (0.,18.) node[left]{$0.5\mu$};
		\draw [dashed] (0.,15.) node[left]{$1.5\mu$} -- (18.,15.);
		\draw [dashed] (18.,13.) -- (0.,13.) node[left]{$\mu$};
		\draw [dashed] (0.,11.) node[left]{$0.5\mu$} -- (18.,11.);
		\draw [->] (0.,3.) -- (0.,9.);
		\draw [->] (0.,3.) -- (18.,3.)node[right]{$t$};
		\draw [dashed] (0.,4.) node[left]{$0.5\mu$} -- (18.,4.);
		\draw [dashed] (0.,8.) node[left]{$1.5\mu$} -- (18.,8.);
		\draw [dotted] (16.,29.)-- (16.,3.) node[below]{$1$};
		\draw [dotted] (8.,29.)-- (8.,3.) node[below]{$0.5$};
		\draw [line width=1.pt,blue] (0.,20.)-- (4.,22.) -- (12.,18.)node[above] {$\mu_1(t)$}-- (16.,20.);
		\draw [line width=1.pt,cyan] (0.,20.)-- (4.,18.)-- (12.,22.)node[below] {$\mu_2(t)$}-- (16.,20.);
		\draw [line width=1.pt,cyan] (0.,15.)-- (8.,11.)-- (16.,15.)node[below right ] {$\mu_2(t)$};
		\draw [line width=1.pt,blue] (0.,11.)-- (8.,15.)-- (16.,11.)node[ above right] {$\mu_1(t)$};
		\draw [line width=1.pt,cyan] (0.,4.)-- (4.,4.)-- (8.,8.)-- (12.,8.)node[below left] {$\mu_2(t)$}-- (16.,4.);
		\draw [line width=1.pt,blue] (0.,4.)-- (4.,8.)-- (8.,8.)-- (12.,4.)node[above right] {$\mu_1(t)$}-- (16.,4.);
		\normalsize
		\draw (-1,27) node[left,text width=4cm]{Shape-change actuation strategy};
		\draw (-1,20) node[left,text width=4cm]{Friction-manipulation  strategy A};
		\draw (-1,13) node[left,text width=4cm]{Friction-manipulation  strategy B};
		\draw (-1,6) node[left,text width=4cm]{Friction-manipulation  strategy C};
		\end{tikzpicture}
	\end{center}
	\caption{The three friction-manipulation strategies proposed respectively in Example \ref{ex:stratA} (strategy A), Example \ref{ex:stratB} (strategy B) and Example \ref{ex:stratC} (strategy C), with respect to the common shape-actuation of \eqref{eq:shape_act}. }
	\label{fig:manip_strat}
\end{figure}
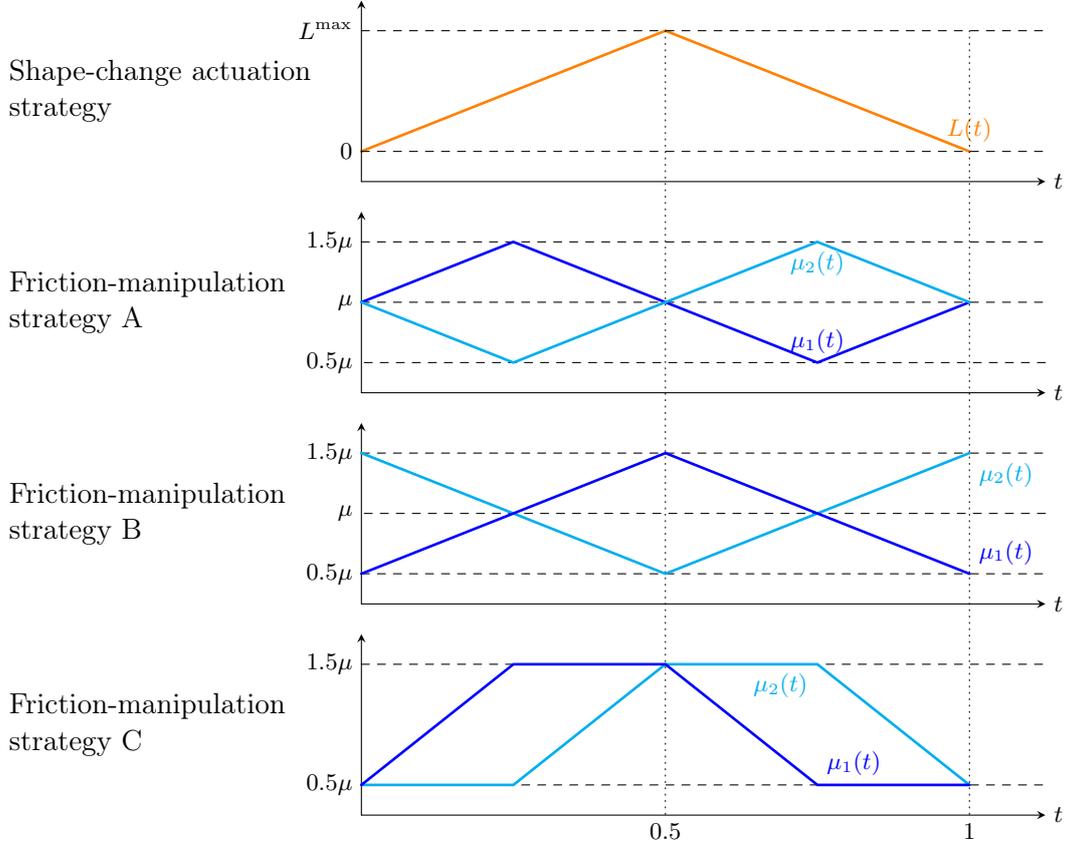

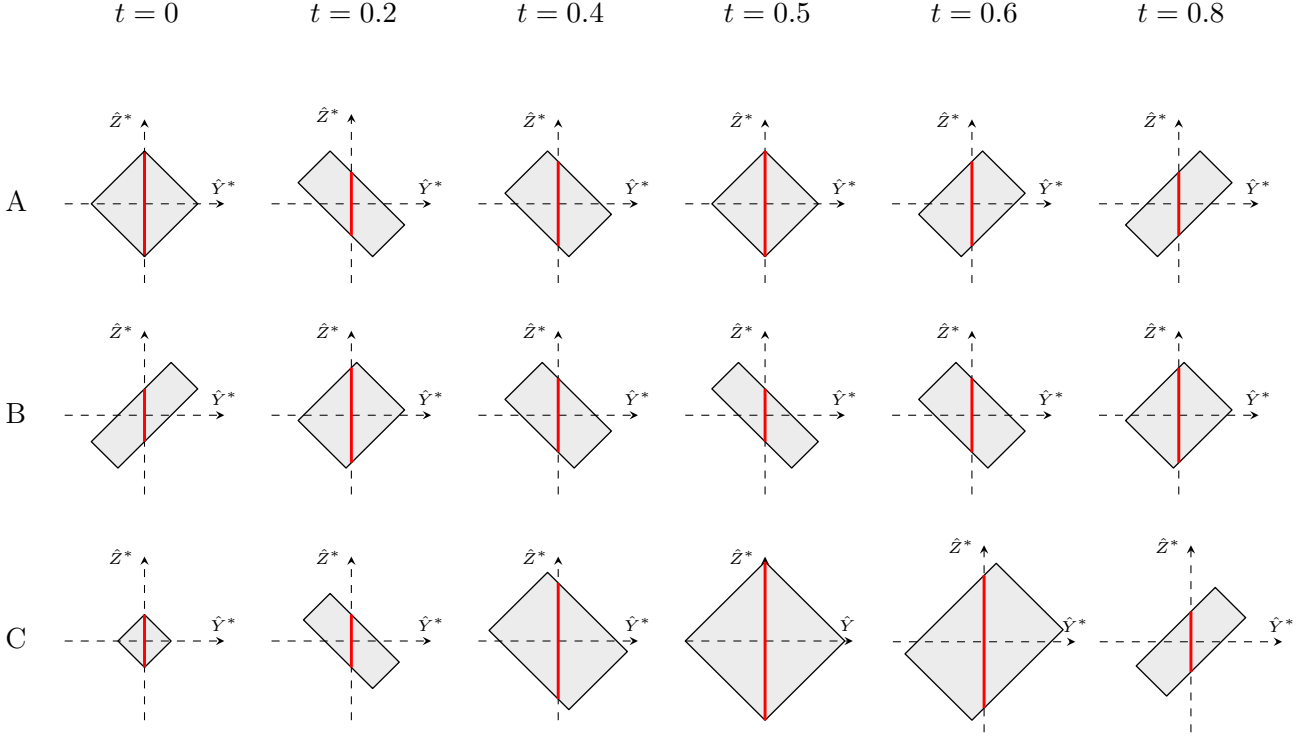
\begin{figure}[t]
	\begin{center}
			\centerline{
			\begin{tabular}{lm{23mm}m{23mm}m{23mm}m{23mm}m{23mm}m{23mm}}
				&$$t=0$$&$$t=0.2$$&$$t=0.4$$&$$t=0.5$$&$$t=0.6$$&$$t=0.8$$\\
				A&
				\begin{tikzpicture}[line join=round,>=stealth,x=0.7cm,y=0.7cm]
				\clip(-1.6,-1.8) rectangle (1.7,2);
				\draw[->,dashed, color=black] (-1.5,0.) -- (1.5,0.) node[above]{\tiny $\hat Y^*$};
				\draw[->,dashed,color=black] (0.,-1.5) -- (0.,1.6) node[left]{\tiny $\hat Z^*$};
				\draw [line width=1.1pt,color=red] (0.,1.)-- (0.,-1.);
				\fill[line width=0.5pt,draw=black,fill=uuuuuu,fill opacity=0.1] (0.,1.) -- (-1.,0.) -- (0.,-1.) -- (1.,0.) -- cycle;
				\end{tikzpicture}
				&
				\begin{tikzpicture}[line join=round,>=stealth,x=0.7cm,y=0.7cm]
				\clip(-1.6,-1.8) rectangle (1.7,2);
				\draw[->,dashed, color=black] (-1.5,0.) -- (1.5,0.) node[above]{\tiny $\hat Y^*$};
				\draw[->,dashed,color=black] (0.,-1.5) -- (0.,1.7) node[left]{\tiny $\hat Z^*$};
				\fill[line width=0.5pt,draw=black,fill=uuuuuu,fill opacity=0.1] (-0.4,1.) -- (-1.,0.4) -- (0.4,-1.) -- (1.,-0.4) -- cycle;
				\draw [line width=1.1pt,color=red] (0.,0.6)-- (0.,-0.6);
				\end{tikzpicture}
				&
				\begin{tikzpicture}[line join=round,>=stealth,x=0.7cm,y=0.7cm]
				\clip(-1.6,-1.8) rectangle (1.7,2);
				\draw[->,dashed, color=black] (-1.5,0.) -- (1.5,0.) node[above]{\tiny $\hat Y^*$};
				\draw[->,dashed,color=black] (0.,-1.5) -- (0.,1.6) node[left]{\tiny $\hat Z^*$};
				\fill[line width=0.5pt,draw=black,fill=uuuuuu,fill opacity=0.10000000149011612] (-0.2,1.) -- (-1.,0.2) -- (0.2,-1.) -- (1.,-0.2) -- cycle;
				\draw [line width=1.1pt,color=red] (0.,0.8)-- (0.,-0.8);
				\end{tikzpicture}
				&
				\begin{tikzpicture}[line join=round,>=stealth,x=0.7cm,y=0.7cm]
				\clip(-1.6,-1.8) rectangle (1.7,2);
				\draw[->,dashed, color=black] (-1.5,0.) -- (1.5,0.) node[above]{\tiny $\hat Y^*$};
				\draw[->,dashed,color=black] (0.,-1.5) -- (0.,1.6) node[left]{\tiny $\hat Z^*$};
				\fill[line width=0.5pt,draw=black,fill=uuuuuu,fill opacity=0.1] (0.,1.) -- (-1.,0.) -- (0.,-1.) -- (1.,0.) -- cycle;
				\draw [line width=1.1pt,color=red] (0.,1.)-- (0.,-1.);
				\end{tikzpicture}
				&
				\begin{tikzpicture}[line join=round,>=stealth,x=0.7cm,y=0.7cm]
				\clip(-1.6,-1.8) rectangle (1.7,2);
				\draw[->,dashed, color=black] (-1.5,0.) -- (1.5,0.) node[above]{\tiny $\hat Y^*$};
				\draw[->,dashed,color=black] (0.,-1.5) -- (0.,1.6) node[left]{\tiny $\hat Z^*$};
				\fill[line width=0.5pt,draw=black,fill=uuuuuu,fill opacity=0.1] (0.2,1.) -- (-1.,-0.2) -- (-0.2,-1.) -- (1.,0.2) -- cycle;
				\draw [line width=1.1pt,color=red] (0.,0.8)-- (0.,-0.8);
				\end{tikzpicture}&
				\begin{tikzpicture}[line join=round,>=stealth,x=0.7cm,y=0.7cm]
				\clip(-1.6,-1.8) rectangle (1.7,2);
				\draw[->,dashed, color=black] (-1.5,0.) -- (1.5,0.) node[above]{\tiny $\hat Y^*$};
				\draw[->,dashed,color=black] (0.,-1.5) -- (0.,1.6) node[left]{\tiny $\hat Z^*$};
				\fill[line width=0.5pt,draw=black,fill=uuuuuu,fill opacity=0.1] (0.4,1.) -- (-1.,-0.4) -- (-0.4,-1.) -- (1.,0.4) -- cycle;
				\draw [line width=1.1pt,color=red] (0.,0.6)-- (0.,-0.6);
				\end{tikzpicture}\\
				B
				&
				\begin{tikzpicture}[line join=round,>=stealth,x=0.7cm,y=0.7cm]
				\clip(-1.6,-1.8) rectangle (1.7,2);
				\draw[->,dashed, color=black] (-1.5,0.) -- (1.5,0.) node[above]{\tiny $\hat Y^*$};
				\draw[->,dashed,color=black] (0.,-1.5) -- (0.,1.6) node[left]{\tiny $\hat Z^*$};
				\fill[line width=0.5pt,draw=black,fill=uuuuuu,fill opacity=0.1] (0.5,1.) -- (-1.,-0.5) -- (-0.5,-1.) -- (1.,0.5) -- cycle;
				\draw [line width=1.1pt,color=red] (0.,0.5)-- (0.,-0.5);
				\end{tikzpicture}
				&
				\begin{tikzpicture}[line join=round,>=stealth,x=0.7cm,y=0.7cm]
				\clip(-1.6,-1.8) rectangle (1.7,2);
				\draw[->,dashed, color=black] (-1.5,0.) -- (1.5,0.) node[above]{\tiny $\hat Y^*$};
				\draw[->,dashed,color=black] (0.,-1.5) -- (0.,1.6) node[left]{\tiny $\hat Z^*$};
				\fill[line width=0.5pt,draw=black,fill=uuuuuu,fill opacity=0.1] (0.1,1.) -- (-1.,-0.1) -- (-0.1,-1.) -- (1.,0.1) -- cycle;
				\draw [line width=1.1pt,color=red] (0.,0.9)-- (0.,-0.9);
				\end{tikzpicture}
				&
				\begin{tikzpicture}[line join=round,>=stealth,x=0.7cm,y=0.7cm]
				\clip(-1.6,-1.8) rectangle (1.7,2);
				\draw[->,dashed, color=black] (-1.5,0.) -- (1.5,0.) node[above]{\tiny $\hat Y^*$};
				\draw[->,dashed,color=black] (0.,-1.5) -- (0.,1.6) node[left]{\tiny $\hat Z^*$};
				\fill[line width=0.5pt,draw=black,fill=uuuuuu,fill opacity=0.1] (-0.3,1.) -- (-1.,0.3) -- (0.3,-1.) -- (1.,-0.3) -- cycle;
				\draw [line width=1.1pt,color=red] (0.,0.7)-- (0.,-0.7);
				\end{tikzpicture}
				&
				\begin{tikzpicture}[line join=round,>=stealth,x=0.7cm,y=0.7cm]
				\clip(-1.6,-1.8) rectangle (1.7,2);
				\draw[->,dashed, color=black] (-1.5,0.) -- (1.5,0.) node[above]{\tiny $\hat Y^*$};
				\draw[->,dashed,color=black] (0.,-1.5) -- (0.,1.6) node[left]{\tiny $\hat Z^*$};
				\fill[line width=0.5pt,draw=black,fill=uuuuuu,fill opacity=0.1] (-0.5,1.) -- (-1.,0.5) -- (0.5,-1.) -- (1.,-0.5) -- cycle;
				\draw [line width=1.1pt,color=red] (0.,0.5)-- (0.,-0.5);
				\end{tikzpicture}
				&
				\begin{tikzpicture}[line join=round,>=stealth,x=0.7cm,y=0.7cm]
				\clip(-1.6,-1.8) rectangle (1.7,2);
				\draw[->,dashed, color=black] (-1.5,0.) -- (1.5,0.) node[above]{\tiny $\hat Y^*$};
				\draw[->,dashed,color=black] (0.,-1.5) -- (0.,1.6) node[left]{\tiny $\hat Z^*$};
				\fill[line width=0.5pt,draw=black,fill=uuuuuu,fill opacity=0.1] (-0.3,1.) -- (-1.,0.3) -- (0.3,-1.) -- (1.,-0.3) -- cycle;
				\draw [line width=1.1pt,color=red] (0.,0.7)-- (0.,-0.7);
				\end{tikzpicture}
				&
				\begin{tikzpicture}[line join=round,>=stealth,x=0.7cm,y=0.7cm]
				\clip(-1.6,-1.8) rectangle (1.7,2);
				\draw[->,dashed, color=black] (-1.5,0.) -- (1.5,0.) node[above]{\tiny $\hat Y^*$};
				\draw[->,dashed,color=black] (0.,-1.5) -- (0.,1.6) node[left]{\tiny $\hat Z^*$};
				\fill[line width=0.5pt,draw=black,fill=uuuuuu,fill opacity=0.1] (0.1,1.) -- (-1.,-0.1) -- (-0.1,-1.) -- (1.,0.1) -- cycle;
				\draw [line width=1.1pt,color=red] (0.,0.9)-- (0.,-0.9);
				\end{tikzpicture}\\
				C
				&
				\begin{tikzpicture}[line join=round,>=stealth,x=0.7cm,y=0.7cm]
			\clip(-1.6,-1.8) rectangle (1.7,2);
				\draw[->,dashed, color=black] (-1.5,0.) -- (1.5,0.) node[above]{\tiny $\hat Y^*$};
				\draw[->,dashed,color=black] (0.,-1.5) -- (0.,1.6) node[left]{\tiny $\hat Z^*$};
				\fill[line width=0.5pt,draw=black,fill=uuuuuu,fill opacity=0.1] (0.,0.5) -- (-0.5,0.) -- (0.,-0.5) -- (0.5,0.) -- cycle;
				\draw [line width=1.1pt,color=red] ((0.,0.5)-- (0.,-0.5);
				\end{tikzpicture}
				&
				\begin{tikzpicture}[line join=round,>=stealth,x=0.7cm,y=0.7cm]
	\clip(-1.6,-1.8) rectangle (1.7,2);
				\draw[->,dashed, color=black] (-1.5,0.) -- (1.5,0.) node[above]{\tiny $\hat Y^*$};
				\draw[->,dashed,color=black] (0.,-1.5) -- (0.,1.6) node[left]{\tiny $\hat Z^*$};
				\fill[line width=0.5pt,draw=black,fill=uuuuuu,fill opacity=0.1] (-0.4,0.9) -- (-0.9,0.4) -- (0.4,-0.9) -- (0.9,-0.4)  -- cycle;
				\draw [line width=1.1pt,color=red] (0.,0.5)-- (0.,-0.5);
				\end{tikzpicture}
				&
				\begin{tikzpicture}[line join=round,>=stealth,x=0.7cm,y=0.7cm]
	\clip(-1.6,-1.8) rectangle (1.7,2);
				\draw[->,dashed, color=black] (-1.5,0.) -- (1.5,0.) node[above]{\tiny $\hat Y^*$};
				\draw[->,dashed,color=black] (0.,-1.5) -- (0.,1.6) node[left]{\tiny $\hat Z^*$};
				\fill[line width=0.5pt,draw=black,fill=uuuuuu,fill opacity=0.1] (-0.2,1.3) -- (-1.3,0.2) -- (0.2,-1.3) -- (1.3,-0.2)  -- cycle;
				\draw [line width=1.1pt,color=red] (0.,1.1)-- (0.,-1.1);
				\end{tikzpicture}
				&
				\begin{tikzpicture}[line join=round,>=stealth,x=0.7cm,y=0.7cm]
					\clip(-1.6,-1.8) rectangle (1.7,2);
				\draw[->,dashed, color=black] (-1.5,0.) -- (1.65,0.) node[above]{\tiny $\hat Y^*$};
				\draw[->,dashed,color=black] (0.,-1.5) -- (0.,1.6) node[left]{\tiny $\hat Z^*$};
				\fill[line width=0.5pt,draw=black,fill=uuuuuu,fill opacity=0.1] (0.,1.5) -- (-1.5,0.) -- (0.,-1.5) -- (1.5,0.) -- cycle;
				\draw [line width=1.1pt,color=red] (0.,1.5)-- (0.,-1.5);
				\end{tikzpicture}
				&
				\begin{tikzpicture}[line join=round,>=stealth,x=0.8cm,y=0.8cm]
				\clip(-1.6,-1.8) rectangle (1.7,2);
				\draw[->,dashed, color=black] (-1.5,0.) -- (1.5,0.) node[above]{\tiny $\hat Y^*$};
				\draw[->,dashed,color=black] (0.,-1.5) -- (0.,1.6) node[left]{\tiny $\hat Z^*$};
				\fill[line width=0.5pt,draw=black,fill=uuuuuu,fill opacity=0.1] (0.2,1.3) -- (-1.3,-0.2) -- (-0.2,-1.3) -- (1.3,0.2) -- cycle;
				\draw [line width=1.1pt,color=red] (0.,1.1)-- (0.,-1.1);
				\end{tikzpicture}
				&
				\begin{tikzpicture}[line join=round,>=stealth,x=0.8cm,y=0.8cm]
				\clip(-1.6,-1.8) rectangle (1.7,2);
				\draw[->,dashed, color=black] (-1.5,0.) -- (1.5,0.) node[above]{\tiny $\hat Y^*$};
				\draw[->,dashed,color=black] (0.,-1.5) -- (0.,1.6) node[left]{\tiny $\hat Z^*$};
				\fill[line width=0.5pt,draw=black,fill=uuuuuu,fill opacity=0.1] (0.4,0.9) -- (-0.9,-0.4) -- (-0.4,-0.9) -- (0.9,0.4) -- cycle;
				\draw [line width=1.1pt,color=red] (0.,0.5)-- (0.,-0.5);
				\end{tikzpicture}
			\end{tabular}
		}
	\end{center}
\caption{The three friction-manipulation strategies of Example \ref{ex:stratA} (strategy A), Example \ref{ex:stratB} (strategy B) and Example \ref{ex:stratC} (strategy C), illustrate with the evolution of their respective domains $\C(t)$. The red sections correspond to the sections $\hCsh(t)$. By Remark \ref{rem:SDdirection} we notice how the locomotion capability of the crawler are qualitatively changed in time, since the same shape change can produce displacement in different directions.}
\label{fig:domain_strat}
\end{figure}

We propose three different strategies for the changes in friction, illustrated in Figure \ref{fig:manip_strat} and, with the evolution of their corresponding stasis domains, in Figure \ref{fig:domain_strat}.

\begin{example}[Strategy A] \label{ex:stratA}
	In the first case we assume that both the friction coefficient oscillate between $0.5\mu$ and $1.5\mu$, in such a way that $\mu_1(t)+\mu_2(t)=2\mu$ and that the times of maximum difference between the two coefficients correspond to the extreme values of $L(t)$. More precisely, we have
	
\begin{equation*}
\mu_1(t)=\begin{cases} (1+2t)\mu  &\text{for $0\leq t\leq \frac{1}{4}$}\\
(2-2t)\mu  &\text{for $\frac{1}{4}\leq t\leq \frac{3}{4}$}\\
(2t-1)\mu  &\text{for $\frac{3}{4} \leq t\leq 1$}
\end{cases}
\qquad
\mu_2(t)=\begin{cases} (1-2t)\mu  &\text{for $0\leq t\leq \frac{1}{4}$}\\
2t\mu  &\text{for $\frac{1}{4}\leq t\leq \frac{3}{4}$}\\
(3-2t)\mu  &\text{for $\frac{3}{4} \leq t\leq 1$}
\end{cases}
\end{equation*}

The evolution of the stasis domain $\Csh(t)$ in time is illustrated in Figure \ref{fig:stratA}.
We assume that at the initial time the crawler is in the state of maximum shape length $z$ compatible with  \eqref{eq:init_admiss_dep}, namely
$z(0)=\frac{\mu}{k}$  [equivalently $\ten(t)=-\mum(t)$].
We will check that after a cycle the system returns in this initial situation.

If  $k\Lmax>\mu$, in the first half period we have $\dot L(t)= 2\Lmax$ and $k\dot L(t)> \dot \mu_\mathrm{min}(t)$, so that the crawler remains initially steady, with $\ten(t)$ moving across the stasis domain $\Csh(t)$. We have three possible situations. 
If $\mu<k\Lmax<2\mu$, then $\ten(t)$ will remain inside the stasis domain for the whole first half period, and no motion will occur. 

If $k\Lmax> 3\mu$, then $\ten(t)$ will reach the other boundary  of $\Csh$ at a time $t_1<\frac{1}{4}$ and $\ten(t)=\mum(t)$ will remain satisfied for all $t\in[t_1,\frac{1}{2}]$, with $z(t)$ increasing. More precisely
\begin{equation}
	\dot z(t)=\begin{cases}
	0 &\text{for $t\in(0,t_1)$}\\
	2(\Lmax +\frac{\mu}{k}) &\text{for $t\in\left(t_1,\frac{1}{4}\right)$}\\
	2(\Lmax -\frac{\mu}{k}) &\text{for $t\in\left(\frac{1}{4},\frac{1}{2}\right)$}
	\end{cases} \qquad\text{with $t_1=\frac{\mu}{k\Lmax+\mu}$}
\end{equation}
In Figure \ref{fig:stratA} is illustrated, for $k\Lmax=4\mu$ the evolution of $\ten(t)$ and the corresponding net translation  produced.

If $2\mu\leq k\Lmax\leq 3\mu$, then $\ten(t)$ will reach the other boundary of $\Csh$ at a time $t_2>\frac{1}{4}$ and $\ten(t)=\mum(t)$ will remain satisfied for all $t\in[t_2,\frac{1}{2}]$, with $z(t)$ increasing. More precisely
\begin{equation}
\dot z(t)=\begin{cases}
0 &\text{for $t\in(0,t_2)$}\\
2(\Lmax -\frac{\mu}{k}) &\text{for $t\in\left(t_2,\frac{1}{2}\right)$}
\end{cases} \qquad\text{with $t_2=\frac{\mu}{2(k\Lmax-\mu)}$}
\end{equation}
By a symmetry argument one can check that, in the second half period $t\in\left(\frac{1}{2}, 1\right)$, the systems evolves according to
\begin{equation} \label{eq:inching_symm}
\dot x_1(t)=\dot x_2\left(t-\frac{1}{2}\right) \qquad \dot x_2(t)=\dot x_1\left(t-\frac{1}{2}\right)	
\end{equation}
and after a cycle the locomotors returns (up to a translation) in its original internal state.
The displacement produced by a cycle is
$$
y(1)-y(0)=\begin{cases}
0 & \text{if $ k\Lmax\leq 2\mu$} \\
\Lmax-2\frac{\mu}{k}>0 & \text{if $k\Lmax\geq 2\mu$}
\end{cases}
$$

The case $k\Lmax=\mu$ does not satisfy condition \ref{cond:symmbreak_time} and has therefore to be excluded by our analysis.

If instead $k\Lmax<\mu$, in the first quarter of the period we remain on the initial boundary point of $\Csh(t)$, with $\ten(t)=-\mum(t)$ and $z$ decreasing. In other words, we observe a contraction of the crawler with $\xi_2$ moving backwards. This is caused by the rapid drop in the corresponding friction coefficient, occurring at a faster scale than the change in the load of the spring, which would instead favour an elongation. This behaviour, however, is only a temporary phase. In the remaining time of this period, and in the following ones, the tension  $\ten(t)$ will remain inside the stasis domain for almost every time. The crawler will not move any more; the load applied to the system being too small.
\end{example}

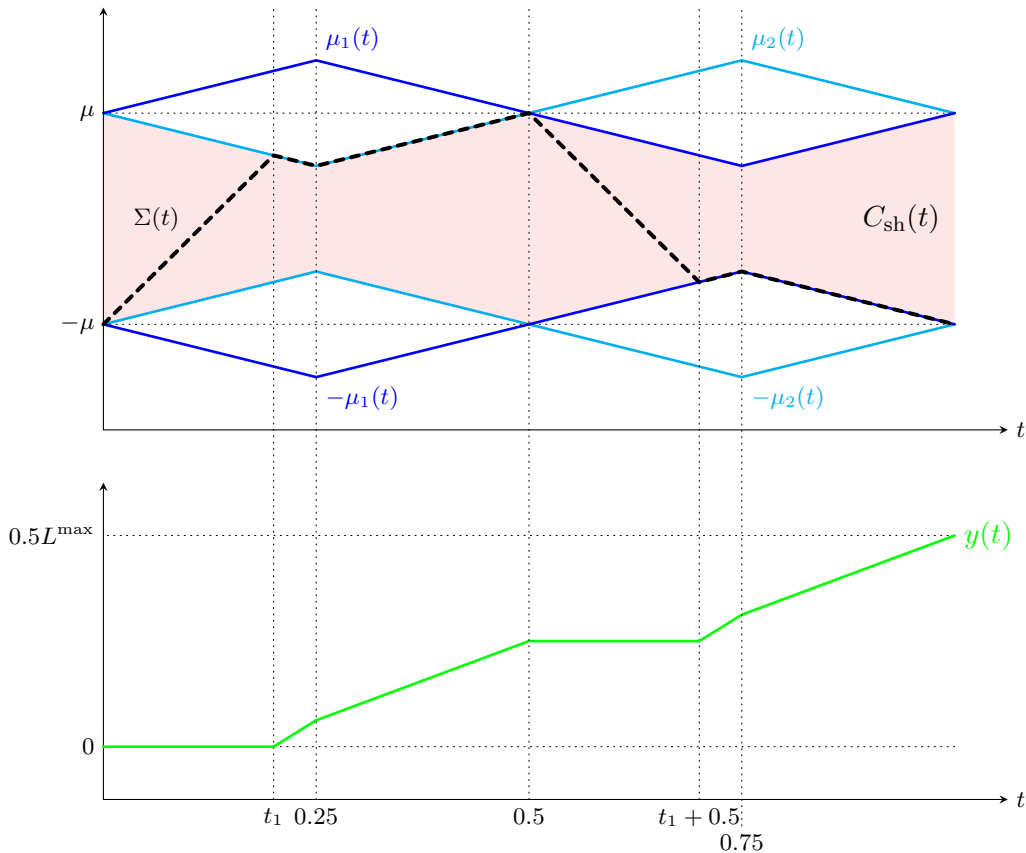
\begin{figure}[tb!]
	\begin{center}
		\begin{tikzpicture}[line cap=round,line join=round,>=stealth,x=0.7cm,y=0.7cm]
		\clip(-2.,-12.) rectangle (18.,5.); \footnotesize
		\fill[color=uuuuuu,fill=red,fill opacity=0.10] (0.,2.) -- (4.,1.) -- (8.,2.) -- (12.,1.) -- (16.,2.) -- (16.,-2.) -- (12.,-1.) -- (8.,-2.) -- (4.,-1.) -- (0.,-2.) -- cycle;
		\draw [line width=1.pt,color=cyan] (0.,2.)-- (4.,1.)-- (12.,3.) node[above right]{$\mu_2(t)$}-- (16.,2.);
		\draw [line width=1.pt,color=cyan] (0.,-2.)-- (4.,-1.)-- (12.,-3.)node[below right]{$-\mu_2(t)$}-- (16.,-2.);
		\draw [line width=1.pt,color=blue] (0.,2.)-- (4.,3.) node[above right]{$\mu_1(t)$} -- (12.,1.)-- (16.,2.);
		\draw [line width=1.pt,color=blue] (0.,-2.)-- (4.,-3.)node[below right]{$-\mu_1(t)$}-- (12.,-1.)-- (16.,-2.);
		\draw [line width=1.5pt,dashed] (0.,-2.) -- (3.2,1.2)-- (4.,1.)-- (8.,2.)-- (11.2,-1.2)-- (12.,-1.)-- (16.,-2.);
		\draw [->] (0.,-4.) -- (0.,4.);
		\draw [->] (0.,-4.) -- (17.,-4.) node[right]{$t$};
		\draw [dotted] (0.,2.) node[left]{$\mu$}-- (16.,2.);
		\draw [dotted] (0.,-2.) node[left]{$-\mu$}-- (16.,-2.);
		\draw [dotted] (16.,-10.)-- (0.,-10.) node[left]{$0$};
		\draw [dotted] (16.,-6.)-- (0.,-6.) node[left]{$0.5\Lmax$};
		\draw [dotted] (3.2,-11.) node[below]{$t_1$} -- (3.2,4.);
		\draw [dotted] (11.2,-11.) node[below]{$t_1 +0.5$}-- (11.2,4.);
		\draw [->] (0.,-11.) -- (0.,-5.);
		\draw [->] (0.,-11.) -- (17.,-11.)node[right]{$t$};;
		\draw [line width=1.pt,color=green] (0.,-10.)-- (3.2,-10.)-- (4.,-9.5)-- (8.,-8.)-- (11.2,-8)-- (12.,-7.5)-- (16.,-6.) node[right]{\normalsize $y(t)$};
		\draw [dotted] (4.,-11.) node[below]{$0.25$} -- (4.,4.);
		\draw [dotted] (8.,-11.) node[below]{$0.5$}-- (8.,4.);
		\draw [dotted] (12.,-11.5) node[below]{$0.75$}-- (12.,4.);
		\draw (1,0) node{$\ten(t)$};\normalsize
		\draw (15,0) node{$\Csh(t)$};
		\end{tikzpicture}
	\end{center}
\caption{Motility of the crawler of Example \ref{ex:stratA} (Strategy A). The picture above illustrates the evolution in time of the stasis domain $\Csh(t)$ (in red), produced by friction-manipulation. The dashed line described the evolution of the tension $\ten(t)$.\\	
The picture below shows the corresponding net translation $y(t)$ of the crawler. We notice that there are two advancing speeds. When $\mu_\mathrm{min}$ is decreasing (i.e.\ when $\Csh$ is shrinking) the crawler is faster, since a fraction of the internal energy of the crawler is released and spent in locomotion, in addition to the contribution given by actuation. On the other hand, when $\mu_\mathrm{min}$ is increasing (i.e.\ when $\Csh$ is expanding) a fraction of the actuation is  used to keep the system at the (increasing) critical tension, resulting in a smaller displacement.}
\label{fig:stratA}
\end{figure}

\begin{example}[Strategy B]\label{ex:stratB}
In this case the evolution of the friction coefficients is the same of the previous example, but with a different phase with respect to the actuation $L(t)$. More precisely, we consider
\begin{equation*}
\mu_1(t)=\begin{cases} (\frac{1}{2}+2t)\mu  &\text{for $0\leq t\leq \frac{1}{2}$}\\
(\frac{5}{2}-2t)\mu  &\text{for $\frac{1}{2}\leq t\leq 1$}
\end{cases}
\qquad
\mu_2(t)=\begin{cases} (\frac{3}{2}-2t)\mu  &\text{for $0\leq t\leq \frac{1}{2}$}\\
(2t-\frac{1}{2})\mu  &\text{for $\frac{1}{2} \leq t\leq 1$}
\end{cases}
\end{equation*}
As in the previous case, we start with $\ten(t)=-\mum(t)$.

We have three possible situations. 
If $k\Lmax<\mu$, then $\ten(t)$ will remain inside the stasis domain for the whole first half period, and no motion will occur. 

If $k\Lmax> 3\mu$, then $\ten(t)$ will reach the other boundary  of $\Csh$ at a time $t_3<\frac{1}{4}$ and $\ten(t)=\mum(t)$ will remain satisfied for all $t\in[t_3,\frac{1}{2}]$, with $z(t)$ increasing. More precisely
\begin{equation}
\dot z(t)=\begin{cases}
0 &\text{for $t\in(0,t_3)$}\\
2(\Lmax -\frac{\mu}{k}) &\text{for $t\in\left(t_3,\frac{1}{4}\right)$}\\
2(\Lmax +\frac{\mu}{k}) &\text{for $t\in\left(\frac{1}{4},\frac{1}{2}\right)$}
\end{cases} \qquad\text{with $t_3=\frac{\mu}{2(k\Lmax-\mu)}$}
\end{equation}

If $\mu\leq k\Lmax\leq 3\mu$, then $\ten(t)$ will reach the other boundary of $\Csh$ at a time $t_4>\frac{1}{4}$ and $\ten(t)=\mum(t)$ will remain satisfied for all $t\in[t_4,\frac{1}{2}]$, with $z(t)$ increasing. More precisely
\begin{equation}
\dot z(t)=\begin{cases}
0 &\text{for $t\in(0,t_4)$}\\
2(\Lmax +\frac{\mu}{k}) &\text{for $t\in\left(t_4,\frac{1}{2}\right)$}
\end{cases} \qquad\text{with $t_4=\frac{\mu}{k\Lmax+\mu}$}
\end{equation}

We notice that the increase of $z(t)$ produces a net advancement of the crawler if $\frac{1}{4}<t<\frac{1}{2}$, but a negative (backwards) net translation if $0<t<\frac{1}{4}$.
As in the previous example, the behaviour of the systems in the second half of the period can be recovered using the symmetries of the system, leading also in this case to \eqref{eq:inching_symm}.
Thus the net translation produced by a cycle is
$$
y(1)-y(0)=\begin{cases}
0& \text{if $ k\Lmax\leq \mu$} \\
\Lmax-\frac{\mu}{k}& \text{if $\mu\leq k\Lmax\leq 3\mu$} \\
2\frac{\mu}{k}& \text{if $k\Lmax\geq 3\mu$}
\end{cases}
$$
\end{example}

\begin{example}[Strategy C] \label{ex:stratC}
	The third strategy we propose resembles the one named \lq\lq crawling\rq\rq\ in \cite{Ume13}. At the state of maximum length, friction is maximum at both ends; at the state of minimum length, friction is minimum at both ends. During elongation (resp.~contraction) the friction increases (resp.~decreases) first in the back end and only then in the forward one. Namely we have
\begin{equation*}
	\mu_1(t)=\begin{cases} 
	(\frac{1}{2}+4t)\mu  &\text{for $0\leq t\leq \frac{1}{4}$}\\
	\frac{3}{2}\mu  &\text{for $\frac{1}{4}\leq t\leq \frac{1}{2}$}\\
	(\frac{7}{2}-4t)\mu  &\text{for $\frac{1}{2}\leq t\leq \frac{3}{4}$}\\
	\frac{1}{2}\mu  &\text{for $\frac{3}{4} \leq t\leq 1$}
	\end{cases}
	\qquad
	\mu_2(t)=\begin{cases} 
	\frac{1}{2}\mu  &\text{for $0\leq t\leq \frac{1}{4}$}\\
	(4t-\frac{1}{2})\mu  &\text{for $\frac{1}{4}\leq t\leq \frac{1}{2}$}\\
	\frac{3}{2}\mu  &\text{for $\frac{1}{2}\leq t\leq \frac{3}{4}$}\\
	(\frac{9}{2}-4t)\mu  &\text{for $\frac{3}{4} \leq t\leq 1$}
	\end{cases}
\end{equation*}
As in the previous two cases, we start with $\ten(t)=-\mum(t)$.
If $k\Lmax<2\mu$, the load is too small to cross the  stasis domain and no motion occurs.
We therefore consider $k\Lmax> 2\mu$. In this case $\ten(t)$ will reach the other boundary  of $\Csh$ at a time $t_5<\frac{1}{4}$ and $\ten(t)=\mum(t)$ will remain satisfied for all $t\in[t_5,\frac{1}{2}]$, with $z(t)$ increasing. More precisely
\begin{equation}
\dot z(t)=\begin{cases}
0 &\text{for $t\in(0,t_5)$}\\
2\Lmax &\text{for $t\in\left(t_5,\frac{1}{4}\right)$}\\
2(\Lmax -2\frac{\mu}{k}) &\text{for $t\in\left(\frac{1}{4},\frac{1}{2}\right)$}
\end{cases} \qquad\text{with $t_5=\frac{\mu}{2k\Lmax}$}
\end{equation}
Then, after half period we  have $\ten(1/2)=\frac{3}{2}\mu$. 

The behaviour during the second half period is not symmetric to that observed in the first half period; indeed, we can distinguish two different qualitative scenarios.
If $2\mu<k\Lmax<4\mu$, then $\ten(t)$ reaches the boundary of $\Csh$ at a time $t_6>\frac{3}{4}$, when $-\mum(t)$ is already constant, and $\ten(t)=-\mum(t)$ remains satisfied until $t=1$. More precisely we have 
\begin{equation}
\dot z(t)=\begin{cases}
0 &\text{for $t\in(\frac{1}{2},t_6)$}\\
-2\Lmax &\text{for $t\in\left(t_6,1\right)$}
\end{cases} \qquad\text{with $t_6=\frac{1}{2}+\frac{\mu}{k\Lmax}$}
\end{equation}
If instead $k\Lmax> 4\mu$, then $\ten(t)$ will reach the lower boundary  of $\Csh$ at a time $t_7<\frac{3}{4}$ and $\ten(t)=\mum(t)$ will remain satisfied for all $t\in[t_7,1]$, with $z(t)$ decreasing. More precisely
\begin{equation}
\dot z(t)=\begin{cases}
0 &\text{for $t\in(\frac{1}{2},t_7)$}\\
-2(\Lmax +2\frac{\mu}{k}) &\text{for $t\in\left(t_7,\frac{3}{4}\right)$}\\
-2\Lmax &\text{for $t\in\left(\frac{3}{4},1\right)$}
\end{cases} \qquad\text{with $t_7=\frac{1}{2}+\frac{3\mu}{2k\Lmax+4\mu}$}
\end{equation}

Using these results, we can compute the net translation produced by a cycle as
$$
y(1)-y(0)=\begin{cases}
0& \text{if $ k\Lmax\leq 2\mu$} \\
\Lmax -2\frac{\mu}{k}& \text{if $k\Lmax\geq 2\mu$}
\end{cases}
$$
\end{example}

\medskip
We observe that strategies A and C implement, in different ways, the main idea of two-anchor crawling, using the changes in the friction coefficient to move the two contact points only in the desired direction. As a consequence, the net translation gained in each period is equal to the amplitude $\Lmax$ of the shape actuation, minus a constant term due to elasticity. Indeed, the net translation after a period is the same that we would obtain in the case of constant anisotropic friction, with the lower friction coefficient equal to $\mu$  (cf.~Example~\ref{ex:2legs}). 

We remark, however, that strategy A has two advantages with respect to strategy C. First, strategy A can be described by a  single control parameter, since $\mu_2(t)=2\mu-\mu_1(t)$, whereas strategy C can be obtained only using two control parameters. Second, strategy C requires a faster change in the friction coefficients.

The comparison between strategies A and B emphasizes the importance of the phase between shape actuation and friction manipulation. While a phase shift of strategy A of half period would produce a backwards translation, symmetric to Example \ref{ex:stratA}, strategy B illustrates a possible behaviour at intermediate shifts.  
Interestingly, strategy B has a lower minimal amplitude in shape actuation necessary to achieve locomotion, hence for small value of $\Lmax$ it is actually more efficient than strategy A. However, this holds until a certain threshold: large values of $\Lmax$ produce a temporary backward displacement of each contact points, and the net translation after a period does not increase any longer with $\Lmax$.

\paragraph{Acknowledgements}
The author is supported by FCT -- Fundação para a Ciência e Tecnologia, under the project UID/MAT/04561/2013.

\printbibliography

\end{document}